\theoremstyle{plain}
\newtheorem{theorem}{Theorem}[]
\newtheorem{definition}[theorem]{Definition}
\newtheorem{lemma}[theorem]{Lemma}
\newtheorem{corollary}[theorem]{Corollary}
\newtheorem{proposition}[theorem]{Proposition}
\newtheorem{example}[theorem]{Example}
\newtheorem{remark}[theorem]{Remark}
\begin{document}

\pagestyle{plain}

\pagenumbering{arabic}

\begin{center}\large{\bf Ring Theoretic Properties of Partial Crossed products and related themes }\footnote{ Keywords: twisted partial action, partial crossed product, artinian rings, Krull dimension, homological dimension.\\
MSC 2010: 16W22, 16S35, 16P20, 16F10, 16L30.}\end{center}

\vskip5mm

\begin{center}{\bf {\rm Laerte $Bemm^1$}}, {\rm Wagner $Cortes^2$},\end{center}

\begin{center} {\footnotesize $^{1}$ Departamento de Matem\'{a}tica\\
Universidade Estadual de Maring\'a\\
Maring\'a, PR, Brazil\\
Avenida Colombo, 5790\\
CEP 87020-900\\
e-mail: {\it lbemm2@uem.br}}
\end{center}

\begin{center}{\footnotesize $^{2}$  Instituto de Matem\'{a}tica\\
Universidade Federal do Rio Grande do Sul\\
91509-900, Porto Alegre, RS, Brazil\\
e-mail:  {\it wocortes@gmail.com}}
\end{center}

\date{}

\begin{abstract}
In this paper we work with unital twisted partial actions. We investigate ring theoretic properties of partial crossed products as artinianity, noetherianity, perfect property, semilocalproperty, semiprimary property and we also study the Krull dimension. Moreover, we consider  triangular matrix representation of partial skew group rings, weak and global dimensions of partial crossed products Also we study when  the partial crossed products are Frobenius and symmetric algebras.

\end{abstract}

\section*{Introduction}\

Partial actions of groups have been introduced in the theory of
operator algebras as a general approach to study $C^{*}$-algebras by
partial isometries (see, in particular, \cite{E1} and \cite{E3}),
and crossed products classically, as well-pointed out in
\cite{DES1}, are the center of the rich interplay between dynamical
systems and operator algebras (see, for instance, \cite{M1} and
\cite{Q1}). The general notion of (continuous) twisted partial
action of a locally compact group on a $C^{*}$-algebra and the
corresponding crossed product were introduced in \cite{E1}.
Algebraic counterparts for some notions  mentioned above were
introduced and studied in \cite{DE}, stimulating further
investigations, see for instance, \cite{LF},  \cite{CCF}, \cite{laz e mig}
and references therein. In particular, twisted partial actions of
groups on abstract rings and corresponding crossed products were
recently introduced in \cite{DES1}.

 In \cite{lmsw} and \cite{CM} the authors investigated ring theoretic properties of partial crossed products. In this article, we continue  the  investigation of  ring theoretic properties of partial crossed products as artinianity, noetherianity, semilocal  property, perfect property and semiprimary property. In these cases,  we study necessary and suffcient conditions for the partial crossed  products satisfy such conditions.  We study the Krull dimension of partial crossed products and we compare it with the Krull dimension of the base ring.  

We study triangular matrix representation of partial skew group rings  and we study global dimension, weak global dimension of partial crossed products. Moreover, we investigate when the partial crossed products are Frobenius  and symmetric algebras. 

This article is organized as follows: In the Section 1, we give some preliminaries and results that will be used during this paper.

In the Section 2,  we study necessary and sufficient conditions for the partial crossed products to be artinian, noetherian, semilocal, semiprimary and perfect, where we generalize the results presented in \cite{park1} and \cite{park2}.

In the Section 3, we only consider  partial actions of groups and we study the Krull dimension of partial skew group rings  and we compare it with the Krull dimension of the base ring. In these cases we extend the results presented in \cite{park2}.

In the Section 4, we define relative partial actions  of groups  on \linebreak  bimodules and  triangular matrix algebras, and we give some conditions for the existence of enveloping actions. Moreover, we study triangular matrix representation  of partial skew group rings.

In the Section 5,  we study global dimension, weak global dimension of partial crossed products. We finish the article by presenting conditions for partial crossed products to be Frobenius and symmetric algebras.

\section{ Preliminaries}\

In this section, we recall some notions about twisted partial actions on rings. More details can be found in \cite{DES1}, \cite{DES2} and \cite{DE}.



We begin with the following definition that is a particular case of   (\cite{DES2}, Definition 2.1).

\begin{definition} \label{def1}
An unital  \textit{twisted partial action} \index{twisted partial action} of a group
$G$ on a ring $R$ is a triple
\begin{center}
\vskip-1mm $\alpha = \big(\{D_g\}_{g \in G}, \{\alpha_g\}_{g \in G}, \{w_{g,h}\}_{(g,h) \in G\times G}\big)$,
\end{center}
\vskip-1mm where for each $g \in G$, $D_g$ is a two-sided ideal in
$R$ generated by a central idempotent $1_g$,  $\alpha_g:D_{g^{-1}} \rightarrow D_g$ is an isomorphism of
rings  and for each $(g,h) \in G \times G$, $w_{g,h}$ is an
invertible element of $D_g D_{gh}$, satisfying the
following postulates, for all $g,h,t \in G$:
\begin{itemize}
\item [$(i)$] \vskip-2.2mm $D_e = R$ and $\alpha_e$ is the identity map of $R$;
\item [$(ii)$] \vskip-2.2mm $\alpha_g(D_{g^{-1}} D_h)= D_g D_{gh}$;
\item [$(iii)$] \vskip-2.2mm $\alpha _g \circ \alpha _h (a)= w_{g,h} \alpha_{gh}(a) w_{g,h}^{-1}$, for all $a \in D_{h^{-1}} D_{h^{-1} g ^{-1}}$;
\item [$(iv)$] \vskip-2.2mm $w_{g,e}=w_{e,g}=1$;
\item [$(v)$] \vskip-2.2mm $\alpha_g(a w _{h,t}) w _{g,ht}= \alpha_g(a)w_{g,h} w _{gh,t}$, for all $a \in D_{g^{-1}} D_{h}D_{ht}$.
\end{itemize}
\end{definition}


\begin{remark}

 If $w_{g,h} = 1_g1_{gh}$, for all $g,h\in G$,  then we
have a partial action as defined  in
(\cite{DE}, Definition 1.1) and when $D_g = R$, for all $g\in G$,
we have that $\alpha$ is a twisted global action.
\end{remark}

Let $\beta = \big(T, \{\beta_g\}_{g \in G}, \{u_{g, h}\}_{(g, h)
\in G \times G}\big)$ be  a twisted global action of a group $G$
on a (non-necessarily unital) ring $T$ and $R$ an ideal of $T$
generated by a central idempotent $1_R$. We can restrict $\beta$
to $R$ as follows. Putting \linebreak $D_g = R\cap \beta_{g}(R) = R\cdot
\beta_{g}(R)$, $g\in G$, each $D_{g}$ has an identity element
$1_R\beta_{g}(1_R)$. Then defining
$\alpha_{g}=\beta_{g}|_{D_{g^{-1}}}$, for all $g\in G$, the items ($i$),
($ii$) and ($iii$) of \linebreak Definition \ref{def1} are satisfied.
Furthermore, defining $w_{g, h} = u_{g,
h}1_R\beta_g(1_R)\beta_{gh}(1_R)$, $g,h\in G$, the items ($iv$),
($v$) e ($vi$) of Definition \ref{def1} are also satisfied. So, we obtain a twisted partial
action of $G$ on $R$.


The following definition appears in (\cite{DES2}, Definition 2.2).

\begin{definition}\label{def2}
A twisted global action $\big(T, \{\beta_g\}_{g\in G},
\{u_{g,h}\}_{(g,h)\in G\times G}\big)$ of a  group $G$ on an
associative (non-necessarily unital) ring $T$ is said to be an
enveloping action \index{enveloping action} (or a globalization)
for a twisted partial action $\alpha$ of $G$ on a ring $R$ if
there exists a monomorphism $\varphi:R\rightarrow T$ such that,
for all $g$ and $h$ in $G$:
\begin{itemize}
\item [$(i)$] \vskip-2.2mm $\varphi(R)$  is an  ideal of $T$;
\item [$(ii)$] \vskip-2.2mm $T = \sum_{g\in G}\beta_g(\varphi(R))$;
\item [$(iii)$] \vskip-2.2mm $\varphi(D_g) = \varphi(R)\cap \beta_g(\varphi(R))$;
\item [$(iv)$] \vskip-2.2mm $\varphi\circ \alpha_g(a) = \beta_g\circ \varphi(a)$, for all $a\in D_{g^{-1}}$;
\item [$(v)$] \vskip-2.2mm $\varphi(aw_{g,h}) = \varphi(a)u_{g,h}$ and $\varphi(w_{g,h}a) = u_{g,h}\varphi(a)$, for all $a\in D_gD_{gh}$.
\end{itemize}
\end{definition}

In (\cite{DES2}, Theorem 4.1), the authors studied  necessary and
sufficient conditions for a twisted partial action $\alpha$ of a
group $G$ on a ring $R$ has an enveloping action. Moreover, they
studied which rings satisfy such conditions.

Suppose that $(R, \alpha, w)$ has an enveloping action
$(T,\beta,u)$. In this case, we may assume that $R$ is an ideal of
$T$ and we can rewrite the conditions of the Definition \ref{def2}
as follows:
\begin{itemize}
\item [$(i')$] \vskip-3mm $R$  is an  ideal of $T$;
\item [$(ii')$] \vskip-3mm $T = \sum_{g\in G}\beta_g(R)$;
\item [$(iii')$] \vskip-3mm $D_g = R\cap \beta_g(R)$, for all $g\in G$;
\item [$(iv')$] \vskip-3mm $\alpha_g(a) = \beta_g(a)$, for all $x\in D_{g^{-1}}$ and  $g\in G$;
\item [$(v')$] \vskip-3mm $aw_{g,h} = au_{g,h}$ and $w_{g,h}a = u_{g,h}a$, for all $a\in D_gD_{gh}$ and $g, h \in G$.
\end{itemize}

Given a twisted partial action $\alpha$ of a group $G$ on a ring
$R$,  we recall from (\cite{DES1}, Definition 2.2) that the
\emph{partial crossed product}\index{partial crossed product}
$R*_{\alpha, w}G$ is the direct sum
\begin{center}
\vskip-1mm $\displaystyle\bigoplus_{g\in G}D_g\delta_g$,
\end{center}
\vskip-1mm
where $\delta_g$'s are symbols, with the usual addition and multiplication defined by
\vskip-5mm
$$(a_g\delta_g)(b_h\delta_h) = \alpha_g(\alpha^{-1}_g(a_g)b_h)w_{g,h}\delta_{gh}.$$
\vskip2mm

By (\cite{DES1}, Theorem 2.4), $R*_{\alpha, w}G$ is an associative
ring whose identity is $1_R\delta_1$. Note that we have the
injective morphism $\phi: R\rightarrow R*_{\alpha, w}G$, defined
by $r\mapsto r\delta_1$ and we can consider $R*_{\alpha, w}G$ as
an extension of $R$. .

The following definition appears in (\cite{lmsw}, Definition 4.13).

\begin{definition} Let $\alpha$ be an unital  twisted partial action. We say that $\alpha$  is of finite type if there exists a finite subset $\{g_1, g_2, \cdots ,g_n\}$ of $G$ such that \begin{center} $\sum_{i=1}^n D_{ggi} =R$,\end{center} for all $g\in G$. \end{definition}

 It is convenient to point out that in the same way as in (\cite{laz e mig}, Proposition 1.2), we can prove that an unital  twisted partial action $\alpha$ of a group $G$ on an unital ring $R$ with an enveloping action $(T,\beta,u)$ is of finite type if and only if there exists $g_1,\cdots, g_n\in G$ such that $T=\sum_{i=1}^n \beta_{g_{i}} (R)$  if and only if $T$  has an identity element. 
 
  It is convenient to remind that two rings $A,B$ are Morita equivalent rings if their module categories are equivalent, see (\cite{AF}, pg 251),  for more details.  Now, we have the  following result which is a particular case of  ({\cite{DES2}, Theorem 3.1).
 
 \begin{proposition} \label{morita} Let $\alpha$ be an unital twisted partial action of a group $G$ on a ring $R$ with enveloping action $(T,\beta,u)$. If $\alpha$ is of finite type, then the rings $R*_{\alpha,w}G$ and $T*_{\beta,u}G$ are Morita equivalent rings.
 \end{proposition}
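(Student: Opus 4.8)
The plan is to realize $R *_{\alpha,w} G$ as a corner ring of $T *_{\beta,u} G$ cut out by a full idempotent, and then invoke the standard fact that whenever $e$ is an idempotent of a ring $S$ with $SeS = S$, the corner $eSe$ is Morita equivalent to $S$. Since $\alpha$ is of finite type, $T$ has an identity $1_T$ and $T *_{\beta,u} G$ is a genuine global crossed product, so this framework applies. The candidate idempotent is $\mathbf{1} := 1_R\delta_e$, the image of $1_R$ under the canonical embedding $R \hookrightarrow T \hookrightarrow T *_{\beta,u} G$; a one-line computation using $\beta_e = \mathrm{id}$ and $u_{e,e} = 1$ shows $\mathbf{1}^2 = \mathbf{1}$.

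First I would identify the corner $\mathbf{1}(T *_{\beta,u} G)\mathbf{1}$. For $t_g \in T$ one computes $\mathbf{1}(t_g\delta_g)\mathbf{1} = 1_R\, t_g\, \beta_g(1_R)\,\delta_g$, and since $\beta_g(1_R)$ is the central idempotent generating $\beta_g(R)$, this element lies in $R \cap \beta_g(R) = D_g$ by condition $(iii')$. Conversely every $d_g\delta_g$ with $d_g\in D_g$ is fixed by this operation, so as a set $\mathbf{1}(T *_{\beta,u} G)\mathbf{1} = \bigoplus_{g\in G} D_g\delta_g$, which coincides with $R *_{\alpha,w} G$. It then remains to check that the two multiplications agree: starting from the product $(d_g\delta_g)(d_h\delta_h) = \beta_g(\beta_g^{-1}(d_g)d_h)\,u_{g,h}\,\delta_{gh}$ in $T*_{\beta,u}G$, I would use $(iv')$ to replace $\beta_g^{\pm 1}$ by $\alpha_g^{\pm 1}$ on the relevant ideals (noting $\beta_g^{-1}(d_g) = \alpha_g^{-1}(d_g)\in D_{g^{-1}}$ and that $\alpha_g^{-1}(d_g)d_h\in D_{g^{-1}}D_h$, where $\beta_g = \alpha_g$), and then use $(v')$ to replace $u_{g,h}$ by $w_{g,h}$, which is legitimate because $\alpha_g(\alpha_g^{-1}(d_g)d_h)\in D_gD_{gh}$ by axiom $(ii)$ of Definition \ref{def1}. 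This recovers exactly the product $\alpha_g(\alpha_g^{-1}(d_g)d_h)\,w_{g,h}\,\delta_{gh}$ of $R*_{\alpha,w}G$, yielding a ring isomorphism $R *_{\alpha,w} G \cong \mathbf{1}(T *_{\beta,u} G)\mathbf{1}$.

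The heart of the argument, and the place where finite type is indispensable, is showing that $\mathbf{1}$ is full, i.e. $(T*_{\beta,u}G)\,\mathbf{1}\,(T*_{\beta,u}G) = T*_{\beta,u}G$. I would first show that for every $g$ the two-sided ideal generated by $\mathbf{1}$ contains $\beta_g(1_R)\delta_e$: computing $(1_T\delta_g)\,\mathbf{1}\,(1_T\delta_{g^{-1}})$ produces $\beta_g(1_R)\,u_{g,g^{-1}}\,\delta_e$, and multiplying on the right by the unit $u_{g,g^{-1}}^{-1}\delta_e$ strips off the cocycle, leaving $\beta_g(1_R)\delta_e$. Since the degree-$e$ component of this ideal is a two-sided ideal of $T$ (identifying $T$ with $T\delta_e$) and contains every $\beta_g(1_R)$, the finite type hypothesis in the form $T = \sum_{i=1}^n \beta_{g_i}(R) = \sum_{i=1}^n \beta_{g_i}(1_R)T$ forces it to contain $1_T$. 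Hence $1_T\delta_e$, the identity of $T*_{\beta,u}G$, lies in the ideal, so $\mathbf{1}$ is full.

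With the isomorphism onto the corner and the fullness of $\mathbf{1}$ in hand, the corner-ring criterion gives that $\mathbf{1}(T*_{\beta,u}G)\mathbf{1}\cong R*_{\alpha,w}G$ is Morita equivalent to $T*_{\beta,u}G$, which completes the proof. I expect the only genuinely delicate point to be the bookkeeping in the second paragraph, namely tracking which partial domains each factor lives in so that the substitutions $\beta_g\leftrightarrow\alpha_g$ and $u_{g,h}\leftrightarrow w_{g,h}$ are valid; once the right elements are chosen, the fullness computation is short.
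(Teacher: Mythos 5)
Your proof is correct. The paper itself offers no argument for this proposition --- it is simply quoted as a particular case of Theorem 3.1 of the Dokuchaev--Exel--Sim\'on globalization paper --- and your argument (realizing $R*_{\alpha,w}G$ as the corner $1_R\delta_e\,(T*_{\beta,u}G)\,1_R\delta_e$ and using finite type, i.e.\ unitality of $T$ and $T=\sum_{i=1}^n\beta_{g_i}(R)$, to show that $1_R\delta_e$ is a full idempotent) is essentially the standard proof given in that reference, with the substitutions $\beta_g\leftrightarrow\alpha_g$ and $u_{g,h}\leftrightarrow w_{g,h}$ justified exactly as you indicate by conditions $(iv')$, $(v')$ and axiom $(ii)$ of Definition \ref{def1}.
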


Let $\alpha$ be a twisted partial action of a  group $G$ on a ring
$R$. A subring (resp. ideal) $S$ of $R$ is said to be $\alpha$\textit{-invariant} if
$\alpha_g(S\cap D_{g^{-1}}) \subseteq S\cap D_g$, for all $g\in
G$. This is equivalent to say that $\alpha_g(S\cap D_{g^{-1}}) = S\cap D_g$, for all $g\in
G$. In this case the set
\begin{center}
$S*_{\alpha, w}G=\left\{\sum_{g\in G}a_g\delta_g\, |\, a_g\in S\cap D_g\right\}$
\end{center}
is a subring (resp. ideal) of $R*_{\alpha, w}G$.

If $I$ is an $\alpha$-invariant ideal of $R$, then the twisted partial action $\alpha$ can be extended to an twisted partial action $\overline{\alpha}$ of $G$ on $R/I$ as follows: for each $g\in G$, we define $\overline{\alpha}_g:D_{g^{-1}}+I\longrightarrow D_g+I$, putting $\overline{\alpha}_g(a+I)=\alpha_{g}(a)+I$, for all $a\in D_{g^{-1}}$, and for each $(g,h)\in G\times G$, we extend each $w_{g,h}$ to $R/I$ by \linebreak $\overline{w}_{g,h}=w_{g,h}+I$. In this case, $I*_{\alpha, w}G$ is an ideal of $R*_{\alpha, w}G$ and there exists a natural isomorphism between $(R*_{\alpha, w}G)/(I*_{\alpha, w}G)$ and $(R/I)*_{\overline{\alpha}, \overline{w}}G$. Moreover, when  $(R,\alpha,w)$ has enveloping action $(T,\beta,u)$, then by similar methods presented in  Section 2 of \cite{laz e mig}, we have that $(T/I^{e}, \overline{\beta},\overline{u})$ is the enveloping action of $(R/I,\overline{\alpha},\overline{w})$, where $I^e$ is the $\beta$-invariant ideal such that  $I^{e}\cap R=I$. Following (\cite{laz e mig}, Remark 2.3]) we have that the Jacobson radical $J(R)$  of $R$ is $\alpha$-invariant and so,  we can extend $\alpha$ to $R/J(R)$.

For convenience, we recall some concepts of ring theory that we will freely use
 in this paper (see \cite{lam1} for more details).

\begin{definition}
Let $S$ be a ring. Then we have the following definitions.
\begin{itemize}
\item[(i)] $S$ is semiprimitive if the Jacobson radical
$J(S)$ is zero.
\item[(ii)] $S$ is semilocal if $S/J(S)$ is
semisimple.
\item[(iii)]  $S$ is right perfect if $S$ is semilocal
and $J(S)$ is $T$-nilpotent, or equivalently, $S$ is right perfect if $S$ satisfies $DCC$
on principal left ideals (see (\cite{lam1}, Theorem 23.20])).
\item[(iv)] $S$ is semiprimary if $S$ is semilocal and $J(S)$ is
nilpotent.
\end{itemize}
\end{definition}

We finish this section with the following three results, where the first and second result appear in (\cite{jan}, Lemma 3) and  (\cite{park1}, Proposition 2.1), respectively.

\begin{proposition} \label{jan lema3}
Let $S$ be a ring with identity and $B$ a subring of $S$ with the
same identity such that $B$ is a left $B$-direct summand of $S$. If
$S$ is semilocal
then so is $B$.
\end{proposition}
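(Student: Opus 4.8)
The plan is to produce from the hypothesis a left $B$-linear retraction of the inclusion and then to apply the Camps--Dicks characterization of semilocal rings, which asserts that a ring $B$ is semilocal whenever it admits a ring homomorphism into a semisimple ring that reflects invertibility. Since $B$ is a left $B$-direct summand of $S$, I would fix a decomposition ${}_BS = {}_BB \oplus {}_BM$ of left $B$-modules and let $\pi\colon S \to B$ be the associated projection; thus $\pi$ is additive, $\pi(bs) = b\pi(s)$ for all $b \in B$ and $s \in S$, and $\pi|_B = \mathrm{id}_B$, so in particular $\pi(1) = 1$. Note that $\pi$ need not be a ring homomorphism, so only its left $B$-linearity may be used.

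The heart of the argument is to show that the inclusion $B \hookrightarrow S$ reflects units, i.e. that $U(S) \cap B = U(B)$. First I would take $b \in B$ that is invertible in $S$, say $bs = sb = 1$ with $s \in S$. Applying $\pi$ to $bs = 1$ and using left $B$-linearity gives $b\pi(s) = \pi(bs) = \pi(1) = 1$, so $\pi(s) \in B$ is a right inverse of $b$ lying inside $B$. Multiplying $b\pi(s) = 1$ on the left by $s$ and using $sb = 1$ forces $\pi(s) = s$; hence $s \in B$ and $b$ is in fact invertible in $B$. I would then set $T := S/J(S)$, which is semisimple because $S$ is semilocal, and consider the composite ring homomorphism $f\colon B \hookrightarrow S \twoheadrightarrow T$. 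Since an element of $S$ is a unit exactly when its image in $S/J(S)$ is a unit, any $b \in B$ with $f(b)$ invertible in $T$ is invertible in $S$, hence invertible in $B$ by the previous step; thus $f$ reflects units.

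With these two facts in hand the conclusion is immediate: $f\colon B \to T$ is a unit-reflecting homomorphism into the semisimple ring $T = S/J(S)$, so the Camps--Dicks criterion yields that $B$ is semilocal. I expect the main obstacle to be precisely the unit-reflection step: a priori the left-sided splitting only furnishes a one-sided inverse $\pi(s)$ in $B$, and the crux is the observation that this one-sided inverse must coincide with the genuine two-sided inverse $s$ and therefore already lie in $B$. An alternative, more hands-on route would be to prove that $B \cap J(S) \subseteq J(B)$ (which follows since each of its elements is right quasi-regular in $B$, again via $\pi$) and then to identify $B/(B\cap J(S))$ with a subring of the semisimple ring $S/J(S)$; however, that route runs into the difficulty that semisimplicity need not descend along restriction of scalars, which is exactly what the Camps--Dicks detour avoids.
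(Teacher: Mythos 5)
Your argument is correct. Note first that the paper offers no proof of this statement at all: it is quoted from Okni\'{n}ski's Lemma~3 in \cite{jan}, so the only comparison available is with the classical argument there, which manipulates the splitting ${}_BS={}_BB\oplus{}_BM$ and the Jacobson radical directly. Your route is genuinely different: you extract from the splitting the single fact that the inclusion $B\hookrightarrow S$ reflects units --- proved correctly via $b\pi(s)=\pi(bs)=\pi(1)=1$ followed by $\pi(s)=(sb)\pi(s)=s\bigl(b\pi(s)\bigr)=s$, which is the one genuinely clever step --- and then delegate everything else to the Camps--Dicks criterion. This buys brevity and robustness; in fact the detour through $T=S/J(S)$ is unnecessary, since the inclusion $B\hookrightarrow S$ is already a local morphism into the semilocal ring $S$ and the usual form of the Camps--Dicks corollary (a unit-reflecting ring homomorphism into a semilocal ring has semilocal source) applies directly. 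The trade-off is that you invoke a 1993 theorem considerably heavier than anything in the paper's 1980 reference, so you should cite the exact form of the criterion you use. Your closing remark correctly diagnoses why the more elementary route via $B\cap J(S)\subseteq J(B)$ and the embedding of $B/(B\cap J(S))$ into $S/J(S)$ stalls: semisimplicity does not pass to subrings, even to those that are module-theoretic direct summands.
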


\begin{proposition} \label{proposicao 2.1 park1}
Let $S$ be a ring with identity and $B$ a subring of $S$ with the
same identity. If $B$ is a left (resp. right) $B$-direct summand
of $S$, then for any right (resp. left) ideal $I$ of $B$, $IS\cap B=I$
(resp. $SI\cap B=I$).
\end{proposition}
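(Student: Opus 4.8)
The plan is to prove the left-handed statement $IS\cap B=I$, the right-handed version following by the symmetric argument with ``left'' and ``right'' interchanged. The inclusion $I\subseteq IS\cap B$ is immediate: since $B$ and $S$ share the same identity $1$, every $i\in I$ satisfies $i=i\cdot 1\in IS$, and trivially $i\in B$, so $I\subseteq IS\cap B$. The real content is the reverse inclusion.

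For that I would invoke the hypothesis that $B$ is a left $B$-direct summand of $S$, which yields a decomposition $S=B\oplus C$ as left $B$-modules and hence a left $B$-module projection $\pi\colon S\to B$ satisfying $\pi(b)=b$ for all $b\in B$. The feature I want to exploit is that $\pi$ is left $B$-linear, i.e. $\pi(bs)=b\,\pi(s)$ for all $b\in B$ and $s\in S$.

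Now I would take an arbitrary $x\in IS\cap B$ and write it as a finite sum $x=\sum_{k}i_k s_k$ with $i_k\in I\subseteq B$ and $s_k\in S$. Applying $\pi$ and using left $B$-linearity gives $\pi(x)=\sum_k i_k\,\pi(s_k)$. Since each $\pi(s_k)$ lies in $B$ and $I$ is a right ideal of $B$, each term $i_k\pi(s_k)$ belongs to $IB\subseteq I$, so $\pi(x)\in I$. On the other hand $x\in B$ forces $\pi(x)=x$, whence $x\in I$. This establishes $IS\cap B\subseteq I$ and finishes the proof.

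I do not anticipate any serious obstacle; the argument is short. The only point demanding care is the bookkeeping of sides: the direct-summand hypothesis must be used on the left module structure precisely so that $\pi$ is left $B$-linear, which is what permits pulling the factors $i_k\in I$ out of $\pi$ on the left, while keeping $I$ a right ideal guarantees $i_k\pi(s_k)\in I$. For the right-handed claim one symmetrically takes the right $B$-module projection and uses that $I$ is a left ideal.
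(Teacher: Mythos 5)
Your proof is correct: the projection $\pi\colon S\to B$ coming from the left $B$-module decomposition $S=B\oplus C$ is left $B$-linear, which lets you pull the generators $i_k\in I$ out of $\pi$, and the right-ideal property of $I$ then gives $\pi(x)\in I$ while $x\in B$ forces $\pi(x)=x$. The paper itself gives no proof, simply citing Park's Proposition~2.1, and your argument is exactly the standard one behind that citation, with the sides handled correctly.
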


As a particular case of Proposition \ref{proposicao 2.1 park1} we
have the following lemma.

\begin{lemma}
Let $\alpha$ be an unital  twisted partial action of a group $G$ on $R$ and
$B$ an $\alpha$-invariant subring of $R$ such that $_{B} R=_{B}B\oplus _{B} C$. Then for any right ideal $I$ of $B*_{\alpha,w}G$
we have that $I(R*_{\alpha,w}G)\cap (B*_{\alpha,w}G)=I$.
\end{lemma}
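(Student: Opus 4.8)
The plan is to show this lemma is a direct application of Proposition \ref{proposicao 2.1 park1}, which means I first need to verify that the two rings involved, namely $B*_{\alpha,w}G$ and $R*_{\alpha,w}G$, stand in the required relationship: that $B*_{\alpha,w}G$ is a subring of $R*_{\alpha,w}G$ with the same identity, and that it is a left direct summand of $R*_{\alpha,w}G$ as a left $B*_{\alpha,w}G$-module. Once these hypotheses are in place, applying Proposition \ref{proposicao 2.1 park1} with $S = R*_{\alpha,w}G$ and $B$ replaced by $B*_{\alpha,w}G$ gives exactly the conclusion $I(R*_{\alpha,w}G)\cap(B*_{\alpha,w}G)=I$ for any right ideal $I$.

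First I would observe that since $B$ is an $\alpha$-invariant subring of $R$, the discussion preceding Proposition \ref{jan lema3} tells us that $B*_{\alpha,w}G = \{\sum_{g\in G} a_g\delta_g \mid a_g \in B\cap D_g\}$ is a subring of $R*_{\alpha,w}G$. Because $B$ contains the identity (implicit in $_{B}R = {_{B}}B \oplus {_{B}}C$ being a decomposition of unital $B$-modules with $B$ a subring sharing the identity), the element $1_R\delta_1$ lies in $B*_{\alpha,w}G$, so the two crossed products share the same identity. This handles the structural hypotheses of the proposition.

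The main step, and the place I expect the real work to lie, is converting the given left $B$-module decomposition $_{B}R = {_{B}}B \oplus {_{B}}C$ into a left $B*_{\alpha,w}G$-module decomposition of $R*_{\alpha,w}G$ with $B*_{\alpha,w}G$ as a direct summand. The natural candidate is to set $C*_{\alpha,w}G$ (or more precisely the span $\bigoplus_{g\in G}(C\cap D_g)\delta_g$) as the complement and argue that as left $B*_{\alpha,w}G$-modules we have $R*_{\alpha,w}G = (B*_{\alpha,w}G)\oplus(\text{complement})$. I would check that the decomposition $R = B\oplus C$ respects the ideals $D_g$, so that $D_g = (B\cap D_g)\oplus(C\cap D_g)$ for each $g$, and then verify that left multiplication by an element $a_s\delta_s$ of $B*_{\alpha,w}G$ preserves the complement; this rests on the multiplication rule $(a_s\delta_s)(c_h\delta_h) = \alpha_s(\alpha_s^{-1}(a_s)c_h)w_{s,h}\delta_{sh}$ together with $\alpha$-invariance of $B$ (and hence the compatibility of the complement with the twisted partial action). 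The delicate point is ensuring that applying $\alpha_s$ and multiplying by the twisting element $w_{s,h}\in D_sD_{sh}$ keeps the $C$-component inside $C$, which requires that the $B$-module complement $C$ interacts correctly with both the partial isomorphisms $\alpha_s$ and the cocycle $w$.

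Once the left $B*_{\alpha,w}G$-module direct sum decomposition is established, exhibiting $B*_{\alpha,w}G$ as a left direct summand of $R*_{\alpha,w}G$, the conclusion follows immediately from Proposition \ref{proposicao 2.1 park1}: for any right ideal $I$ of $B*_{\alpha,w}G$, we obtain $I(R*_{\alpha,w}G)\cap(B*_{\alpha,w}G)=I$. I expect the decomposition-preservation argument to be the heart of the proof, while the final invocation of the earlier proposition is purely formal.
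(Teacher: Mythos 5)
Your overall strategy---reduce the lemma to Proposition \ref{proposicao 2.1 park1} by exhibiting $B*_{\alpha,w}G$ as a left $B*_{\alpha,w}G$-direct summand of $R*_{\alpha,w}G$---is exactly what the paper intends; the paper in fact gives no proof at all beyond declaring the lemma ``a particular case'' of that proposition. So the only question is whether your sketch of the one nontrivial step, the direct-summand verification, actually closes the argument, and it does not.

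The gap is in the construction of the complement. Your candidate $\bigoplus_{g\in G}(C\cap D_g)\delta_g$ already fails at the level of coefficients: the hypothesis is only that $R=B\oplus C$ as left $B$-modules, and the projection $R\to B$ along $C$ need not preserve the ideals $D_g$, so the decomposition $D_g=(B\cap D_g)\oplus(C\cap D_g)$ can fail. For instance, take $R=K\times K$, $B=K(1,1)$ the diagonal, $C=K(0,1)$, and $D_g=K\times 0$ with $\alpha_g$ the identity on $D_g$ for the nontrivial element $g$ of $G=\mathbb{Z}_2$; then $B$ is $\alpha$-invariant (since $B\cap D_g=0$) and $R=B\oplus C$ as left $B$-modules, yet $(B\cap D_g)\oplus(C\cap D_g)=0\neq D_g$. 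Moreover, even where the coefficientwise decomposition does hold, the stability you need---that left multiplication by $a_s\delta_s\in B*_{\alpha,w}G$ sends the complement into itself---requires $\alpha_s(C\cap D_{s^{-1}})\subseteq C$ and $(C\cap D_s)w_{s,h}\subseteq C$ (and likewise $(B\cap D_s)w_{s,h}\subseteq B$ so that the $B$-component stays where it should). None of these follow from $\alpha$-invariance of $B$ together with $C$ being merely a left $B$-module complement; you explicitly flag this as the ``delicate point'' but leave it unresolved, and resolving it is the entire content of the lemma. To repair the argument one must either choose the complement more carefully (a complement of $B\cap D_g$ inside each $D_g$, compatible with the maps $\alpha_s$ and with the cocycle) or add hypotheses such as $1_g\in B$ and $w_{g,h}\in B$ for all $g,h$; as written, the proof is incomplete.
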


\section{Ring Theoretic Properties of Partial Crossed Products}\

Throughout this section $R$ is an associative ring with an
identity element $1_R$, $G$ is a group and  $\alpha =
\big(\{D_g\}_{g \in G}, \{\alpha_g\}_{g \in G}, \{w_{g,h}\}_{(g,h)
\in G\times G}\big)$ is an unital  twisted partial action of $G$ on $R$
such that $\alpha$ does not  necessarily admit enveloping action, unless otherwise stated.

Let $x=\sum a_gu_g\in R*_{\alpha,w}G$. Then the  \emph{support} of $x$
is  the set
\begin{center}
$supp(x)=\{g\in G\, |\, a_g\neq 0\}$.
\end{center}

For each  subgroup  $H$ of $G$,  we can restrict $\alpha$ to $H$, i.e., if
 we define $(R,\alpha_H,w_H)$ as  $\alpha_H=\{\alpha_h: D_{h^{-1}}\longrightarrow
D_h\, | \, h\in H\}$  and  $w_H=\{w_{l,m}\}_{l,m\in H}$ then we have a twisted partial of $H$ on $R$. Moreover, we have the partial crossed product
\begin{center}
$R*_{\alpha_H,w_H}H=\{\sum_{h\in H} a_h\delta_h\,|\, a_h\in D_h\}$
\end{center}
with usual sum and multiplication rule
$(a_h\delta_h)(b_l\delta_l)=\alpha_h(\alpha_{h^{-1}}(a_h)b_l)w_{h,l}\delta_{hl}$
and it is a subring of $R*_{\alpha,w}G$ with the same identity
$1_R$ of $R*_{\alpha,w}G$.

Let $A$ be the set
\begin{center}
$A=\{x\in R*_{\alpha,w}G \, |\, supp(x)\subseteq G - H\}$.
\end{center}
Since $supp(0)=\emptyset \subseteq G - H$, we have that $0\in A$
and obviously \begin{center}$A\cap (R*_{\alpha_H, w_H}H) =0$.\end{center}


The following result will be useful in this section.

\begin{lemma}\label{lema 1B}
Let $\alpha$ be an unital  twisted partial action of a group $G$ on $R$ and $H$ a subgroup of $G$.   Then $A=\{x\in R*_{\alpha,w}G \, |\,
supp(x)\subseteq G - H\}$ is a left and right
$(R*_{\alpha_H, w_H}H)$-submodule of $R*_{\alpha,w}G$, where the sum and
the scalar multiplication are those inherited from $R*_{\alpha,w}G$.
Moreover, in this case $ (R*_{\alpha_H,w_H}H)$ is a left and right
$(R*_{\alpha_H,w_H}H)$-direct summand of $R*_{\alpha,w}G$ and $A$ is the
complement.
\end{lemma}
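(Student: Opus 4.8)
The plan is to verify the module axioms directly from the grading of $R*_{\alpha,w}G$ and the support formula for products, the decisive point being the closure properties of the subgroup $H$. First I would note that $A$ is an additive subgroup: it contains $0$, and since the support of a sum is contained in the union of the supports, a sum of elements supported in $G-H$ is again supported in $G-H$.

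For the module structure, I would test the actions on homogeneous elements. Take a generator $b_h\delta_h$ of $R*_{\alpha_H,w_H}H$ with $h\in H$ and an element $a_g\delta_g$ with $g\in G-H$. The multiplication rule gives $(b_h\delta_h)(a_g\delta_g)=\alpha_h(\alpha_h^{-1}(b_h)a_g)w_{h,g}\delta_{hg}$, which is supported on $\{hg\}$. The crucial observation is that $hg\in G-H$: were $hg\in H$, then $g=h^{-1}(hg)\in H$ since $H$ is a subgroup, a contradiction. By additivity this shows that the left action of $R*_{\alpha_H,w_H}H$ carries $A$ into $A$. The right action is handled symmetrically, using that $gh\in G-H$ whenever $g\in G-H$ and $h\in H$. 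Hence $A$ is both a left and a right $(R*_{\alpha_H,w_H}H)$-submodule of $R*_{\alpha,w}G$.

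For the direct summand assertion, I would invoke the grading. As additive groups,
$$R*_{\alpha,w}G=\bigoplus_{g\in G}D_g\delta_g=\Big(\bigoplus_{h\in H}D_h\delta_h\Big)\oplus\Big(\bigoplus_{g\in G-H}D_g\delta_g\Big)=(R*_{\alpha_H,w_H}H)\oplus A,$$
because $G$ is the disjoint union of $H$ and $G-H$. The first summand is the subring $R*_{\alpha_H,w_H}H$ acting on itself, hence an $(R*_{\alpha_H,w_H}H)$-submodule; the second is $A$, shown above to be a submodule; their intersection is $0$ as already recorded before the statement; and their sum is the whole ring. Therefore this is a decomposition of $R*_{\alpha,w}G$ as a left and right $(R*_{\alpha_H,w_H}H)$-module, exhibiting $R*_{\alpha_H,w_H}H$ as a direct summand with complement $A$.

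I do not anticipate any genuine obstacle: the entire argument reduces to the support computations $H\cdot(G-H)\subseteq G-H$ and $(G-H)\cdot H\subseteq G-H$, which are immediate from the subgroup closure of $H$. The only point deserving care is to carry out the verification on both sides, since the lemma asserts the left and right statements simultaneously.
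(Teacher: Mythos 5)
Your proposal is correct and follows exactly the route the paper intends: the paper's proof merely asserts that the bimodule property and the decomposition $R*_{\alpha,w}G=(R*_{\alpha_H,w_H}H)\oplus A$ are "not difficult" and "easy to see," and your support computation $H\cdot(G-H)\subseteq G-H$, $(G-H)\cdot H\subseteq G-H$ together with the splitting of the grading over $G=H\sqcup(G-H)$ is precisely the verification being left to the reader. No gaps.
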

\begin{proof}
It is not difficult to show that   $A$ is an  $(R*_{\alpha_H,w_H}H, R*_{\alpha_H,w_H}H)$-bimodule.  Finally, it is easy to
 see that $R*_{\alpha,w}G= (R*_{\alpha_H,w_H}H)\oplus A$ as a right
$(R*_{\alpha_H,w_H}H)$-module. The left side is completely
similar.
\end{proof}

Next, using Lemma \ref{lema 1B} we easily get the following result and we give a short proof for reader's convenience.

\begin{proposition} \label{proposição 10} Let $\alpha$ be an unital  twisted partial action of a group $G$ on a ring
 $R$ and $H$  a subgroup of $G$. If $R*_{\alpha,w}G$ is right artinian (resp. right noetherian, right perfect, semilocal), then $R*_{\alpha_H,w_H}H$ is right artinian (right noetherian, right perfect,
semilocal).
\end{proposition}

\begin{proof} By Lemma \ref{lema 1B},  we have the canonical projection
\begin{center}
$\pi:R*_{\alpha,w}G\rightarrow R*_{\alpha_{H}, w_H}H$.
\end{center}
So, we easily obtain that $R*_{\alpha_{H},w_H}H $ is right
artinian (right noetherian, right perfect, semilocal).
\end{proof}

\begin{remark}  If $H$ is the trivial subgroup $\{1_G\}$ of $G$, then
$R*_{\alpha_H, w_H}H=R$. So, by
Lemma \ref{lema 1B}, $R$ is a right (resp. left) $R$-direct
summand of $R*_{\alpha, w}G$ and the complement $A$ is given by
\begin{center}
$A=\{\sum a_gu_g\in R*_{\alpha}G \, |\, g\neq 1_G\}$.
\end{center}

\end{remark}



As an immediate consequence of  Proposition \ref{proposição 10} we have the
following result.

\begin{corollary}\label{corolario 4B}
Let $\alpha$ be an unital  twisted partial action of a group $G$ on a ring $R$. If
$R*_{\alpha,w}G$ is right artinian (right noetherian, right perfect,
semilocal), then $R$ is right artinian (right noetherian, right perfect,  semilocal).
\end{corollary}

To study necessary and sufficient conditions for the partial
crossed product to be right artinian, we need the  following three results, where we work with twisted global actions  where  we study artinianity in global crossed products. 

It is convenient to remind  that the
\emph{partial fixed ring}  is  $$R^{\alpha}=\{x\in R\, |\,
\alpha_g(x1_{g^{-1}})=x1_g, \forall\, g\in G\}.$$ See  \cite{SDF} and \cite{laz e mig} for more details.


\begin{lemma} \label{lema 13}
Let $\beta$ be a twisted global action of a group $G$ on a primitive ring $T$.
If $T*_{\beta,u}G$ is right artinian, then $G$ is finite.
\end{lemma}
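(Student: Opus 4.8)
The plan is to first reduce the hypothesis on $T$ to the simplest possible form and then to argue by contradiction against the descending chain condition. Since a twisted global action is exactly the special case of an unital twisted partial action with $D_g=R$ for all $g$, Corollary \ref{corolario 4B} applies and gives that $T$ is right artinian. On the other hand $T$ is primitive, so it carries a faithful simple right module $M$; as $J(T)$ annihilates every simple module and $M$ is faithful, $J(T)=0$. A right artinian ring with zero Jacobson radical is semisimple, and a semisimple ring possessing a faithful simple module has a single Wedderburn block, hence is simple. Thus this first step yields $T\cong M_n(D)$ for a division ring $D$, with a unique simple right module $M$ and $T_T\cong M^n$.

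Next I would record the graded structure of $S:=T*_{\beta,u}G$. Because $\beta$ is global, a short computation gives $(1_T\delta_g)(1_T\delta_{g^{-1}})=u_{g,g^{-1}}\delta_e$ with $u_{g,g^{-1}}$ invertible, so every $\delta_g$ is a unit of $S$; consequently $S_gS_h=S_{gh}$ for $S_g:=T\delta_g$, i.e. $S$ is strongly $G$-graded with identity component the simple artinian ring $T$. This is precisely the structure for which the implication \emph{right artinian} $\Rightarrow$ \emph{$G$ finite} is to be established, and it is the engine of the argument.

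The core of the proof is then to assume $G$ infinite and to manufacture a strictly descending chain of right ideals of $S$, contradicting right artinianity. The natural template is Connell's classical proof for group rings via powers of the augmentation ideal; the main obstacle here is that a nontrivial crossed product has \emph{no} augmentation homomorphism $S\to T$ (the projection $\pi\colon S\to T$ onto the $\delta_e$-component furnished by Lemma \ref{lema 1B} is only a $(T,T)$-bimodule map, not a ring map), so no such filtration can be pulled back. Instead I would fix a primitive idempotent $e\in T$ with $eTe$ a division ring, choose an infinite sequence of distinct elements $g_1,g_2,\dots\in G$, and use the differences $e(\delta_{g_i}-\delta_{g_j})$ together with the faithful simple module $M$ to separate supports under the twisted product, building by hand right ideals $I_1\supsetneq I_2\supsetneq\cdots$. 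Controlling the supports of products under the cocycle‑twisted multiplication so that each inclusion is genuinely \emph{proper} is the step I expect to be delicate.

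Finally I would explain why the cheaper invariants do not suffice, to justify insisting on an explicit chain: comparing $S$ with its subring $T$ through module length or uniform (Goldie) dimension fails, because $T\subseteq S$ makes $T$-submodules finer than $S$-submodules and the relevant inequalities run the wrong way; for instance $\mathrm{u.dim}(S_S)$ can be strictly smaller than $|G|$, already for $\mathbb{Q}[C_3]$. Hence the descending chain condition must be invoked directly, as above, and once the infinite chain is produced the resulting contradiction forces $G$ to be finite.
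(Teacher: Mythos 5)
Your reduction is fine as far as it goes: Corollary \ref{corolario 4B} gives $T$ right artinian, and a primitive right artinian ring is indeed simple artinian, so $T\cong M_n(D)$; the observation that each $\delta_g$ is a unit and $S=T*_{\beta,u}G$ is strongly graded is also correct. But everything after that is a plan, not a proof. The entire content of the lemma is the implication ``$S$ right artinian and $T$ simple artinian $\Rightarrow G$ finite,'' and that is precisely the step you defer: you never define the right ideals $I_1\supsetneq I_2\supsetneq\cdots$, and you explicitly flag the verification that the inclusions are proper as ``the step I expect to be delicate.'' There is no reason to believe that the ideals generated by the elements $e(\delta_{g_i}-\delta_{g_j})$ form a \emph{descending} chain at all --- sums of such generators naturally produce ascending chains (which only contradict noetherianity, and by Hopkins--Levitzki that route still leaves you needing to rule out finitely generated infinite groups). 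Even in the untwisted case $T=K$ this implication is Connell's theorem, whose proof is not a bare-hands support computation; your own remark that the augmentation map is unavailable in the twisted setting makes the gap worse, not better. As written, the proposal does not establish the lemma.

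For comparison, the paper avoids constructing any chain: after reducing to $T$ simple artinian it passes to the center $F=Z(T)$ and the fixed field $F^{\beta}$, notes via Lemma \ref{lema 1B} that $F^{\beta}*_{\beta,u}G$ is a direct summand and hence right artinian, extends scalars to an algebraically closed field $K$ to land in a twisted group ring $K*_{\beta,u}G$, and then invokes the known fact (Reid's results on artinian twisted group algebras) that such a ring is artinian only if it is finite dimensional over $K$, i.e.\ only if $G$ is finite. If you want to keep your direct approach you must either actually produce the strictly descending chain for an arbitrary infinite $G$ (and prove properness of each inclusion under the cocycle-twisted multiplication), or replace that step by a citation to the twisted group algebra result, which is what the paper does.
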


\begin{proof}  By Corollary \ref{corolario 4B}  we have that  $T$ is right artinian.
Thus, $T$ is a simple artinian ring and we have that $Z(T)=F$ is a field. Let $F^{\beta}$
be the fixed field by $\beta$. Then, $F^{\beta}*_{\beta,u}G$ is right artinian since
$F^{\beta}*_{\beta,u}G$ is a direct summand of $T*_{\beta,u}G$. Note that
$F^{\beta}\subseteq K$, where $K$ is an algebraically closed field and it is
 not difficult to show that  we can extend $\beta$ to $F^{\beta}\otimes K$
  and consequently we have the isomorphisms $F^{\beta} *_{\beta,u} G\otimes K\simeq F^{\beta}\otimes K[G]\simeq K*_{\beta,u} G$
   that is right artinian, where $K*_{\beta,u} G$ is a twisted group ring. Hence, the twisted group ring $K*_{\beta,u}G$ is right artinian and it  has finite dimension as $K$-vector space. So,   $G$ is  finite.
\end{proof}

\begin{lemma} \label{lema 5B} Let $\beta$ be a twisted global  action of
a group $G$ on a semiprimitive ring $T$. If  $T*_{\beta,u}G$ is right artinian, then $T$ is right
artinian and $G$ is finite.
\end{lemma}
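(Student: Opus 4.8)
The plan is to reduce the statement to the primitive case already settled in Lemma~\ref{lema 13}. First I would invoke Corollary~\ref{corolario 4B}: since $T*_{\beta,u}G$ is right artinian and $T$ is a left and right $T$-direct summand of it, $T$ itself is right artinian. As $\beta$ acts on a \emph{semiprimitive} ring we have $J(T)=0$, and a right artinian semiprimitive ring is semisimple. By the Wedderburn--Artin theorem I may therefore write $T=T_1\times\cdots\times T_k$ as a finite direct product of simple (hence primitive) artinian rings; let $e_1,\dots,e_k$ be the corresponding central primitive idempotents, so that $T_i=Te_i$.

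The key observation is that each $\beta_g$ is a ring automorphism of $T$ and consequently permutes the central primitive idempotents $\{e_1,\dots,e_k\}$. This yields an action of $G$ on a finite set; let $H=\{g\in G:\beta_g(e_1)=e_1\}$ be the stabilizer of $e_1$. Since the orbit of $e_1$ is contained in $\{e_1,\dots,e_k\}$, we get $[G:H]\le k<\infty$. Now I would pass to the corner determined by the idempotent $e=e_1\delta_1$ of $S:=T*_{\beta,u}G$. A direct computation with the crossed product multiplication, using $u_{1,g}=u_{g,1}=1$ and $\beta_g(\beta_g^{-1}(e_1a_g)e_1)=e_1a_g\beta_g(e_1)$, shows that the $\delta_g$-component of $e(a_g\delta_g)e$ equals $e_1\beta_g(e_1)a_g$, which vanishes unless $\beta_g(e_1)=e_1$, i.e. unless $g\in H$. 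Hence $eSe=\bigoplus_{h\in H}e_1T\,\delta_h$.

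Finally I would identify $eSe$ with a twisted global crossed product of $H$ on the primitive ring $T_1=Te_1$. Since $h\in H$ forces $\beta_h(e_1)=e_1$ (and likewise $\beta_{h^{-1}}(e_1)=e_1$), each $\beta_h$ restricts to an automorphism $\beta'_h$ of $T_1$, and $u'_{h,l}:=e_1u_{h,l}$ is an invertible element of $T_1$ inheriting the cocycle identities from $u$; one checks that the multiplication of $eSe$ coincides with that of $T_1*_{\beta',u'}H$. Because a corner by an idempotent of a right artinian ring is again right artinian, $eSe\cong T_1*_{\beta',u'}H$ is right artinian over the primitive ring $T_1$, so Lemma~\ref{lema 13} applies and gives $H$ finite; together with $[G:H]<\infty$ this forces $G$ to be finite, as desired. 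The main obstacle I anticipate is precisely this last identification: one must verify carefully that restricting $\beta$ and $u$ to $T_1$ and $H$ produces a bona fide twisted \emph{global} action (all domains equal to $T_1$, twistings invertible in $T_1$, and the cocycle condition~$(v)$ of Definition~\ref{def1} preserved after multiplying by $e_1$) and that the additive bijection $eSe\cong T_1*_{\beta',u'}H$ is in fact a ring isomorphism.
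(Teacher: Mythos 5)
Your argument is correct, and it follows the paper's broad outline --- pass to the semisimple decomposition $T=T_1\oplus\cdots\oplus T_k$ and reduce to Lemma~\ref{lema 13} on a primitive component --- but it diverges at the key reduction step. The paper takes $H=\{g\in G:\beta_g=\mathrm{id}_T\}$, the kernel of the action, so that each $T_i*_{\beta,u}H$ becomes a genuine \emph{twisted group ring} over the simple component $T_i$ (a ring direct summand of $T*_{\beta,u}H$, which is artinian by Proposition~\ref{proposi��o 10}); Lemma~\ref{lema 13} then gives $H$ finite, and the finiteness of $G/H$ is outsourced to the second paragraph of Park's Lemma~3.2. You instead take $H$ to be the stabilizer of the central primitive idempotent $e_1$, which makes $[G:H]\le k$ immediate by orbit--stabilizer and removes the dependence on Park entirely; the price is that your $H$ may act nontrivially on $T_1$, so you must realize $T_1*_{\beta',u'}H$ as the corner $eSe$ with $e=e_1\delta_1$ and check that the restricted data $(\beta',u')$ really form a twisted global action --- a computation you sketch correctly (the component of $e(a_g\delta_g)e$ is $e_1\beta_g(e_1)a_g\delta_g$, which vanishes off $H$ since distinct central primitive idempotents are orthogonal, and $e_1u_{h,l}$ is a unit of $T_1$ inheriting the cocycle identities). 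Since Lemma~\ref{lema 13} is stated for arbitrary twisted global actions on primitive rings, it applies to your corner just as well. In short: the paper's route is shorter on the page but leans on an external citation for the index $[G:H]$; yours is more self-contained and elementary at that point, at the cost of the corner-ring verification.
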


\begin{proof}  Suppose that  $T*_{\beta,u}G$ is artinian. Then by Corollary \ref{corolario 4B} we have that $T$ is right artinian and it follows that $T$ is semiprimitive Artinian.  Now, let $T=T_1\oplus\cdots\oplus T_n$ be the decomposition in simple components and  consider the subgroup $H=\{g\in G:\beta_g=id_T\}$ of $G$.   Note that the twisted group ring  $T_i*_ {\beta,u}H$  is right Artinian, for all $i\in \{1,\cdots,n\}$.  Now, as in the Lemma \ref{lema 13} we have that $H$ is finite. Hence,  using the similar methods of the second paragraph in the proof of  (\cite{park1}, Lemma 3.2) we get that $G$ is finite.  

The converse is trivial.  \end{proof}


Now, with the last two lemmas in mind, the proof of the next proposition follows with similar methods of (\cite{park1}, Theorem 3.3).

\begin{proposition}\label{pro11} Let $\beta$ be a twisted global action on a ring $T$. Then $T*_{\beta,u}G$ is right artinian if and only if $T$ is right artinian and $G$ is finite.\end{proposition}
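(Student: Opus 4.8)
The plan is to prove the two implications separately, treating the forward direction as the substantive one. Assume first that $T*_{\beta,u}G$ is right artinian. I would immediately invoke Corollary \ref{corolario 4B} to get that $T$ is right artinian, so the entire burden reduces to showing that $G$ is finite. Since Lemma \ref{lema 5B} already settles this when the base ring is semiprimitive, the natural strategy is to reduce the general case to the semiprimitive one by factoring out the Jacobson radical.

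To carry out this reduction, recall from Section 1 that the Jacobson radical $J(T)$ is invariant under the global action $\beta$, so that $\beta$ descends to a twisted global action $\overline{\beta}$ with cocycle $\overline{u}$ on the quotient $\overline{T}=T/J(T)$, which is semiprimitive by construction. Using the quotient isomorphism recalled in the preliminaries, one has $(T*_{\beta,u}G)/(J(T)*_{\beta,u}G)\cong \overline{T}*_{\overline{\beta},\overline{u}}G$; since any homomorphic image of a right artinian ring is right artinian, the ring $\overline{T}*_{\overline{\beta},\overline{u}}G$ is right artinian. As $\overline{T}$ is semiprimitive, Lemma \ref{lema 5B} now applies directly and yields that $G$ is finite, which completes the forward implication.

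For the converse, assume $T$ is right artinian and $G$ is finite. First I would record, from the multiplication rule, the computation $(a\delta_g)(b\delta_e)=\beta_g(\beta_g^{-1}(a)b)\delta_g=a\beta_g(b)\delta_g$, which shows that each summand $T\delta_g$ is a right $T$-submodule and, since $\beta_g$ is surjective, that $1_T\delta_g$ generates $T\delta_g$ as a right $T$-module. Hence the finite set $\{\delta_g : g\in G\}$ generates $T*_{\beta,u}G$ as a right $T$-module, so $T*_{\beta,u}G$ is a finitely generated right $T$-module. A finitely generated module over a right artinian ring is an artinian module, and because every right ideal of $T*_{\beta,u}G$ is in particular a right $T$-submodule, the descending chain condition on right ideals follows at once.

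The step I expect to require the most care is the reduction modulo $J(T)$: one must confirm that $\overline{\beta}$ is genuinely a twisted \emph{global} action on $\overline{T}$ (with $\overline{u}$ well defined and invertible) rather than merely a partial one, and that the quotient isomorphism transports the artinian hypothesis faithfully. Beyond this, the argument is essentially bookkeeping with the two module structures on $T*_{\beta,u}G$; the genuine analytic content has already been isolated in Lemmas \ref{lema 13} and \ref{lema 5B}, where finiteness of $G$ in the primitive case is extracted by base-changing to an algebraically closed field and reducing to a twisted group ring of finite $K$-dimension.
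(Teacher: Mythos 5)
Your argument is correct and is essentially the argument the paper intends: the paper only says the proof ``follows with similar methods of Park's Theorem 3.3'' using Lemmas \ref{lema 13} and \ref{lema 5B}, and that method is precisely your reduction modulo $J(T)$ to the semiprimitive case handled by Lemma \ref{lema 5B}, together with the standard finitely-generated-module argument for the converse. Your write-up simply supplies the details the paper leaves to the reader.
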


Now, we are in conditions to prove the first main result of this section that generalizes (\cite{park1}, Theorem 3.3).

\begin{theorem} \label{teorema 6B}
Let $\alpha$ be an unital  twisted partial action of a group $G$ on a ring $R$ such that $\alpha$ is of finite type. Then  $R*_{\alpha,w}G$ is right artinian if and only if
 $R$ is right artinian and $G$ is a finite group.
\end{theorem}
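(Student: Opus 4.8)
The plan is to prove the two implications separately, handling the backward direction by elementary module bookkeeping and reserving the finite-type hypothesis for the forward direction, where it is needed to pass to an enveloping action. For the backward implication, I would suppose that $R$ is right artinian and $G$ is finite, and view $R*_{\alpha,w}G=\bigoplus_{g\in G}D_g\delta_g$ as a right $R$-module through the embedding $r\mapsto r\delta_1$. Using $w_{g,e}=1$, the multiplication rule gives $(a_g\delta_g)(r\delta_1)=\alpha_g(\alpha_g^{-1}(a_g)r)\delta_g$, and one checks directly that $x\mapsto\alpha_g(x)\delta_g$ is a right $R$-module isomorphism from the right ideal $D_{g^{-1}}$ of $R$ onto $D_g\delta_g$. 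Since $D_{g^{-1}}$ is a right ideal of the right artinian ring $R$, each summand $D_g\delta_g$ is an artinian right $R$-module, and because $G$ is finite the finite direct sum $R*_{\alpha,w}G$ is again an artinian right $R$-module. As every right ideal of $R*_{\alpha,w}G$ is in particular a right $R$-submodule, the descending chain condition on right $R$-submodules forces it on right ideals, so $R*_{\alpha,w}G$ is right artinian; note this direction does not even use finite type.

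For the forward implication, I would assume $R*_{\alpha,w}G$ is right artinian. That $R$ is right artinian is immediate from Corollary \ref{corolario 4B}, so the whole content is to show $G$ is finite, and this is exactly where finite type is used. Since $\alpha$ is a unital twisted partial action of finite type, I would pass to its enveloping action $(T,\beta,u)$; by the finite-type characterization recorded before Proposition \ref{morita}, finite type is equivalent to $T$ having an identity element, so $\beta$ is a genuine unital twisted global action to which Propositions \ref{morita} and \ref{pro11} apply. By Proposition \ref{morita} the rings $R*_{\alpha,w}G$ and $T*_{\beta,u}G$ are Morita equivalent, and since right artinianity is a Morita invariant, $T*_{\beta,u}G$ is right artinian as well.

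Finally I would apply Proposition \ref{pro11} to the twisted global action $\beta$: from $T*_{\beta,u}G$ right artinian it yields $T$ right artinian and $G$ finite, which is the remaining assertion. The main obstacle is precisely the transition to the global setting: one must secure that the enveloping action $(T,\beta,u)$ exists and is \emph{unital}, and it is the finite-type hypothesis, via the characterization preceding Proposition \ref{morita}, that supplies the identity of $T$ and thereby licenses the use of the global result Proposition \ref{pro11}. Once this bridge is in place, Morita invariance of artinianity does all the real work, and everything else reduces to the finite direct sum argument of the first paragraph.
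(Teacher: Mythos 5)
Your backward direction is correct and is exactly what the paper dismisses as ``trivial'': each $D_g\delta_g$ is isomorphic to $D_{g^{-1}}$ as a right $R$-module via $x\mapsto\alpha_g(x)\delta_g$, so for finite $G$ the crossed product is an artinian right $R$-module, hence right artinian; as you note, finite type plays no role here.

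The forward direction, however, has a genuine gap at the very point you call ``the main obstacle'': you pass to ``the'' enveloping action $(T,\beta,u)$ of $\alpha$ as if its existence were automatic. It is not. For unital \emph{twisted} partial actions, globalizability is a nontrivial condition (this is the content of Theorem 4.1 of \cite{DES2}, recalled in the preliminaries), the standing hypothesis of Section 2 is explicitly that $\alpha$ ``does not necessarily admit enveloping action, unless otherwise stated,'' and the statement of Theorem \ref{teorema 6B} does not assume one. Likewise, the finite-type characterization you invoke (finite type iff $T$ has an identity) is stated in the paper only for twisted partial actions that \emph{already possess} an enveloping action, so it cannot be used to produce one. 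The paper closes this gap by a reduction you omit: since $R$ is right artinian (Corollary \ref{corolario 4B}), $J(R)$ is $\alpha$-invariant, $(R/J(R))*_{\overline{\alpha},\overline{w}}G\cong(R*_{\alpha,w}G)/(J(R)*_{\alpha,w}G)$ is again right artinian, and $R/J(R)$ is semisimple; one may therefore assume $R$ semisimple, and it is for such $R$ that the globalization $(T,\beta,u)$ is known to exist, with $T$ semiprimitive right artinian (following \cite{laz e mig}, Corollaries 1.3 and 1.8). Only after this reduction do Proposition \ref{morita}, Morita invariance of artinianity, and Proposition \ref{pro11} apply as you describe; from that point on your argument coincides with the paper's.
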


\begin{proof}

 By Corollary \ref{corolario 4B} we have that $R$ is right artinian.  Thus, $R/J(R)$ is semiprimitive artinian and it is semisimple by (\cite{lam1}, Proposition 11.7). Since $R/J(R)*_{\overline{\alpha},\overline{w}}G$ is right artinian, then  we may assume that $R$ is a semisimple. In this case, 
$\alpha$ admits a globalization $(T, \beta, u)$ and following the same steps of (\cite{laz e
mig}, Corollary 1.3 and 1.8), $T$ is a semiprimitive right
artinian ring with identity. By Proposition \ref{morita}  we have that  $R*_{\alpha,w}G$ and
$T*_{\beta,u}G$ are Morita equivalent and it follows from (\cite{mcconnel robson}, Lemma 3.5.8) that $T*_{\beta,u}G$
is right artinian. So, by Proposition \ref{pro11} we have that  $G$ is
finite

The converse is trivial.
\end{proof}

Now, we present an example where the assumption ``$\alpha$ is of finite type"  on  Theorem \ref{teorema 6B} is not superfluous.

\begin{example} 
	Let $R=Ke_1\oplus Ke_2$, where $\{e_1,e_2\}$ is a set of central orthogonal idempotents and $K$ is a field.  We define the action of $G=\mathbb{Z}$ as follows:  $D_0=R$, $D_{-1}=Ke_{1}$, $D_{1}=Ke_2$,  $D_i=0$, for $i\neq 0,1,-1$, $\alpha_0=id_R$, $\alpha_{1}(e_1)=e_2$, $\alpha_{-1}(e_2)=e_1$ and $\alpha_i=0$ for $i\neq 0,1,-1$. We clearly have that  $R*_{\alpha}G$  is a finite dimensional  vector space  and we obtain that $R*_{\alpha}G$ is right artinian with $G$ an infinite group and $\alpha$ is not of finite type. 
\end{example}

One may ask if we input the assumption $D_g\neq 0$, $g\in G$ and $\alpha$ is any twisted partial action would be enough to get the result of Theorem 15. The next example shows that this is not the case.

\begin{example} 
	Let $R=Ke_1\oplus Ke_2$ and a  partial action  of $\mathbb{Z}$ of $R$ is defined as follows: $D_e=R$,  $\alpha_e=id_R$, $D_i=Ke_1$ and $\alpha_i=id_{Ke_1}$ for all $i\neq 0$. Then $R*_{\alpha}G=R\oplus (\oplus_{i\neq 0} Ke_1\delta_i$). Note that in this case $R*_{\alpha}G$ is commutative and we only have the ideals $I_1=Ke_1\oplus (\oplus_{i\neq 0} Ke_1\delta_i)$ and $I_2=Ke_2\oplus (\oplus_{i\neq 0} Ke_1\delta_i)$. Thus, $R*_{\alpha}G$ is artinian and $G$ is infinite.
\end{example} 



In \cite{CCF}, the authors studied the noetherianity of partial skew groups rings with the assumption that the group is polycyclic-by-finite.  In the next result we study the noetherianity of partial crossed products  where the group is not necessarily polycyclic-by-finite.

\begin{theorem}\label{teorema 7B}
Let $\alpha$  be an unital  twisted partial action of a group  $G$ on $R$
such that only a finite number of ideals $D_g$ are not  the zero
ideal. Then $R$ is right noetherian if and only if $R*_{\alpha,w
}G$ is right noetherian.
\end{theorem}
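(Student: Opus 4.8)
The plan is to prove the two implications separately, with essentially all the content in the ``if'' direction. The backward implication---that right noetherianity of $R*_{\alpha,w}G$ forces $R$ to be right noetherian---is immediate from Corollary \ref{corolario 4B}, so no new argument is required there.

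For the forward direction, the key observation is that the finiteness hypothesis collapses the direct sum defining the crossed product. Setting $S=\{g\in G : D_g\neq 0\}$, which is finite by assumption, we have $R*_{\alpha,w}G=\bigoplus_{g\in S}D_g\delta_g$, a \emph{finite} direct sum. I would then regard $R*_{\alpha,w}G$ as a right $R$-module via the embedding $\phi:R\to R*_{\alpha,w}G$, $r\mapsto r\delta_e$ (that is, via right multiplication by elements $r\delta_e$), and show that this module is finitely generated.

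To establish finite generation, I would prove that each summand $D_g\delta_g$ is a cyclic right $R$-module generated by $1_g\delta_g$. Using the multiplication rule together with $w_{g,e}=1$ (postulate $(iv)$) and $\alpha_g^{-1}(1_g)=1_{g^{-1}}$, one computes, for $r\in R$,
\[
(1_g\delta_g)(r\delta_e)=\alpha_g\big(\alpha_g^{-1}(1_g)\,r\big)\delta_g=\alpha_g(1_{g^{-1}}r)\delta_g .
\]
As $r$ ranges over $R$, the element $1_{g^{-1}}r$ ranges over all of $D_{g^{-1}}=1_{g^{-1}}R$, and since $\alpha_g:D_{g^{-1}}\to D_g$ is surjective, $\alpha_g(1_{g^{-1}}r)$ ranges over all of $D_g$. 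Hence $(1_g\delta_g)R=D_g\delta_g$, and therefore $R*_{\alpha,w}G$ is generated as a right $R$-module by the finite set $\{1_g\delta_g : g\in S\}$.

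Finally, I would invoke the standard module-theoretic fact that over a right noetherian ring $R$ every finitely generated right $R$-module is noetherian, i.e. satisfies the ascending chain condition on $R$-submodules. Since every right ideal of $R*_{\alpha,w}G$ is in particular a right $R$-submodule (by restriction of scalars along $\phi$), any ascending chain of right ideals of $R*_{\alpha,w}G$ is an ascending chain of right $R$-submodules and hence stabilizes; this yields ACC on right ideals, that is, right noetherianity of $R*_{\alpha,w}G$. The only place where the twisted partial structure genuinely enters is the cyclicity computation for $D_g\delta_g$, which is therefore the main technical point; everything else reduces to the finiteness of $S$ and the transfer of noetherianity from a finitely generated module to the ring acting on it.
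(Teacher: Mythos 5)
Your proposal is correct and follows essentially the same route as the paper: both reduce to showing that $R*_{\alpha,w}G=\bigoplus_{g}D_g\delta_g$ is a finitely generated right $R$-module (the paper gets finite generation of each $D_g$ from noetherianity of $R$, while you observe the sharper fact that each $D_g\delta_g$ is cyclic on $1_g\delta_g$), and then both transfer noetherianity from $R$ to the crossed product, the paper by citing (\cite{mcconnel robson}, Lemma 1.1.3) and you by spelling out that same standard argument. The backward implication via Corollary \ref{corolario 4B} is identical in both.
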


\begin{proof}
Suppose that $R$ is right noetherian. Then each ideal $D_{g}$, $g\in G$, is
finite generated (as a right $R$-module) and since $D_{g}$
and $\delta_gD_{g}$ are $R$-isomorphic, we have that each
$u_gD_{g}$ is finitely generated as a right $R$-module. Now, using the assumption
we have that  $R*_{\alpha,w}G=\bigoplus_{g\in
G}\delta_gD_{g}$ is finitely generated as a right $R$-module.
So, by (\cite{mcconnel robson}, Lemma 1.1.3),
$R*_{\alpha,w}G$ is right noetherian.
The converse follows from Corollary \ref{corolario 4B}.
\end{proof}

As a particular case of Theorem \ref{teorema 7B} we have the following result.

\begin{corollary}
Let $\alpha$ be an unital  twisted partial  action of a finite group $G$ on $R$.
Then $R$ is right noetherian if and only if $R*_{\alpha,w}G$ is
right noetherian.
\end{corollary}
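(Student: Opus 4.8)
The plan is to derive this as an immediate specialization of Theorem~\ref{teorema 7B}. First I would observe that the hypothesis of Theorem~\ref{teorema 7B} is that only finitely many of the ideals $D_g$ are nonzero; when $G$ is a finite group this hypothesis is automatically satisfied, since there are only finitely many indices $g\in G$ in total, so \emph{a fortiori} only finitely many can give nonzero $D_g$. Thus the finiteness condition required by the theorem is vacuously met, and the corollary follows at once by applying Theorem~\ref{teorema 7B} to the finite group $G$.

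To be slightly more explicit about the mechanism one inherits from the theorem's proof: if $R$ is right noetherian, then each $D_g$ is finitely generated as a right $R$-module, hence so is each $\delta_g D_g$, and since $G$ is finite the finite direct sum $R*_{\alpha,w}G=\bigoplus_{g\in G}\delta_g D_g$ is finitely generated as a right $R$-module; the noetherian property then transfers to $R*_{\alpha,w}G$ by the standard fact that a ring finitely generated as a module over a noetherian subring is itself noetherian. The converse direction, that right noetherianity of $R*_{\alpha,w}G$ forces $R$ to be right noetherian, is already packaged in Corollary~\ref{corolario 4B}, which applies to any subgroup situation and in particular to the trivial subgroup where $R$ sits as a direct summand of the crossed product.

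There is essentially no main obstacle here, since the work has been done in Theorem~\ref{teorema 7B}; the only point requiring a moment's care is the bookkeeping that finiteness of $G$ genuinely implies the hypothesis ``only a finite number of ideals $D_g$ are not the zero ideal'', which is clear because the index set $G$ is itself finite. Consequently the proof reduces to a single sentence invoking the theorem, and I would write it simply as: \emph{This follows immediately from Theorem~\ref{teorema 7B}, since a finite group has only finitely many elements and hence only finitely many of the ideals $D_g$ can be nonzero.}
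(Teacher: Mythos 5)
Your proposal is correct and matches the paper exactly: the paper presents this corollary as a particular case of Theorem~\ref{teorema 7B}, with the finiteness of $G$ trivially guaranteeing that only finitely many ideals $D_g$ are nonzero. Your additional unpacking of the mechanism (finite generation of $\bigoplus_{g\in G}\delta_g D_g$ as a right $R$-module, plus Corollary~\ref{corolario 4B} for the converse) is just a restatement of the theorem's own proof and adds nothing different.
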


Acoording to (\cite{reid}, pg. 1) the class of groups $\mathcal{H}$ is defined by the groups that are either finite or contains an infinite abelian subgroup. Now, we are in conditions to prove the following result that generalizes (\cite{park1}, Theorem 3.7)

\begin{theorem} \label{teorema 17} 

Let $\alpha$ be an unital  twisted partial action of  a group $G\in \mathcal{H}$ on $R$ such that $\alpha$ is of finite type.  Then  $R*_{\alpha,w}G$ is right perfect if and only if $R$ is right perfect and $G$ is finite. \end{theorem}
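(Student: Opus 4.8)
The plan is to prove both implications in parallel with the artinian case (Theorem~\ref{teorema 6B}), replacing ``right artinian'' by ``right perfect'' and controlling the Jacobson radical through $T$-nilpotency rather than nilpotency. For the forward implication, assume $R*_{\alpha,w}G$ is right perfect. Corollary~\ref{corolario 4B} immediately gives that $R$ is right perfect, so the whole content is to show that $G$ is finite. Since $R$ is right perfect, $R/J(R)$ is semisimple; moreover $J(R)$ is $\alpha$-invariant, the induced action $\overline{\alpha}$ on $R/J(R)$ is again of finite type (the images $\overline{1_g}$ of the generating idempotents still satisfy the defining sums $\sum_i\overline{D_{gg_i}}=R/J(R)$), and
$$(R/J(R))*_{\overline{\alpha},\overline{w}}G\;\cong\;(R*_{\alpha,w}G)/(J(R)*_{\alpha,w}G)$$
is a homomorphic image of a right perfect ring, hence right perfect. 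As $G$ is unchanged by this reduction, I may assume from the outset that $R$ is semisimple and prove that $G$ is finite in that case.

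With $R$ semisimple (so semiprimitive right artinian) and $\alpha$ of finite type, $\alpha$ admits an enveloping action $(T,\beta,u)$ with $T$ semiprimitive right artinian and possessing an identity, by the same steps as in (\cite{laz e mig}, Corollaries 1.3 and 1.8). By Proposition~\ref{morita} the rings $R*_{\alpha,w}G$ and $T*_{\beta,u}G$ are Morita equivalent, and since right perfectness is a property of the module category it is Morita invariant; thus $T*_{\beta,u}G$ is right perfect. It then suffices to show that for a semiprimitive right artinian global action with $G\in\mathcal{H}$, right perfectness of $T*_{\beta,u}G$ forces $G$ to be finite. If $G$ were infinite, then since $G\in\mathcal{H}$ it would contain an infinite abelian subgroup $H$, and restricting via the canonical projection of Lemma~\ref{lema 1B} the ring $T*_{\beta_H,u_H}H$ would still be right perfect. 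I would then mimic Lemmas~\ref{lema 13} and~\ref{lema 5B}: decompose $T$ into its simple components and, after extending scalars to an algebraically closed field, reduce to a twisted group algebra $K*_{\beta,u}H$ over a field. \textbf{This is the crux, and the step I expect to be the main obstacle}: one must show that the twisted group algebra of an infinite abelian group over a field is never right perfect. When $H$ has an element of infinite order this reduces to the fact that a Laurent polynomial algebra is semiprimitive but not semisimple, hence not perfect; in the torsion case one must instead rule out $T$-nilpotency of the augmentation-type radical. This is precisely where the hypothesis $G\in\mathcal{H}$ and the results of \cite{reid} enter, producing the desired contradiction and forcing $G$ to be finite.

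For the converse, assume $R$ is right perfect and $G$ is finite, and set $\mathfrak{j}=J(R)*_{\alpha,w}G$, an ideal of $S=R*_{\alpha,w}G$ because $J(R)$ is $\alpha$-invariant. The key step is that $\mathfrak{j}$ is $T$-nilpotent: for any right $S$-module $M$ with $M\mathfrak{j}=M$, writing $a\delta_g=(a\delta_e)(1_g\delta_g)$ and $\theta_g(m)=m(1_g\delta_g)$ one computes $M\mathfrak{j}=\sum_{g\in G}\theta_g(M)\,J(R)\subseteq MJ(R)$, so that $MJ(R)=M$ as a right $R$-module; right $T$-nilpotency of $J(R)$ then yields $M=0$, which is exactly the module-theoretic characterization of $T$-nilpotency of $\mathfrak{j}$. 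Since $S/\mathfrak{j}\cong(R/J(R))*_{\overline{\alpha},\overline{w}}G$ with $R/J(R)$ semisimple and $G$ finite, Theorem~\ref{teorema 6B} shows that $S/\mathfrak{j}$ is right artinian, in particular right perfect. Finally $S$ is an extension of the right perfect ring $S/\mathfrak{j}$ by the $T$-nilpotent ideal $\mathfrak{j}$: as $\mathfrak{j}\subseteq J(S)$ and both $\mathfrak{j}$ and $J(S)/\mathfrak{j}=J(S/\mathfrak{j})$ are $T$-nilpotent, $J(S)$ is $T$-nilpotent, while $S/J(S)\cong(S/\mathfrak{j})/J(S/\mathfrak{j})$ is semisimple; hence $S$ is semilocal with $T$-nilpotent radical, i.e.\ right perfect, completing the proof.
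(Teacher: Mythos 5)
Your converse is complete and is actually more detailed than the paper's, which simply defers to the arguments of (\cite{park1}, Theorem 3.7): your verification that $J(R)*_{\alpha,w}G$ is right $T$-nilpotent (rewriting $a\delta_g$ with the radical coefficient pulled out through the $\alpha$-invariance of $J(R)$, so that $M\mathfrak{j}=MJ(R)$ for any right module $M$) together with the artinianity of $(R/J(R))*_{\overline{\alpha},\overline{w}}G$ from Theorem \ref{teorema 6B} is a correct and self-contained proof of that half. Your forward direction also shares the paper's skeleton --- reduce modulo the radical, globalize, use Proposition \ref{morita} and Morita invariance of perfectness to pass to $T*_{\beta,u}G$ with $T$ semisimple --- up to the point you yourself flag as the crux.

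That crux is a genuine gap, and the specific move you propose would not close it as stated. If $H$ is an infinite abelian subgroup of $G$, the restriction $T*_{\beta_H,u_H}H$ is still a crossed product in which $H$ may act nontrivially on $T$; decomposing $T$ into simple components and extending scalars does not turn it into a twisted group algebra $K*_{u}H$ of the \emph{whole} group $H$ over a field, because Reid's results apply only when the coefficient field is fixed pointwise. The paper closes this in two steps: it takes $H=\{h\in G:\beta_h=\mathrm{id}_T\}$, for which $T*_{\beta,u}H$ literally is a twisted group ring, applies (\cite{reid}, Theorem 2.6) to conclude $H$ is finite, and then runs the argument of (\cite{park1}, Lemma 3.2) to show $G/H$ is finite --- a step your outline omits entirely and which does not disappear in your setup. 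If you want to keep your ``infinite abelian subgroup'' route, the missing device is the fixed-field reduction of Lemma \ref{lema 13}: inside a simple component $T_1$ with center $F$, pass to the subalgebra $F^{H_1}*_{u}H_1$ over the fixed field, where $H_1$ is the (finite-index, hence still infinite abelian) stabilizer of $T_1$ in $H$; this is a genuine twisted group algebra of an infinite abelian group, it inherits perfectness as a unital direct summand, and Reid then gives the contradiction. Either way, a concrete reduction to a twisted group algebra with trivial action must be supplied before citing \cite{reid}; as written, the proof of the forward implication is incomplete exactly where you predicted it would be.
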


\begin{proof}  Suppose that $R*_{\alpha,w}G$ is right perfect.   By the fact that $\alpha$ is of finite type, we have by Proposition \ref{morita}  that $R*_{\alpha,w}G$ and $T*_{\beta,u}G$ are Morita equivalent rings then  we get that  that  $T*_{\beta,u}G$ is right perfect by (\cite{AF}, Corollary 28.6). Note that, by Corollary \ref{corolario 4B} and (\cite{lam1}, Corollary 24.19), $T$  and \linebreak $(T/J(T))*_{\bar{\beta},\bar{u}}G\cong (T*_{\beta, u}G)/(J(T)*_{\beta,u}G)$ are right perfect. Thus,
we may assume that $T$ is semisimple  and let $H=\{h\in G: \beta_h(x)=x, \forall\, x\in T\}$. Then $H$ is a subgroup of $G$ and by Proposition \ref{proposição 10}  we have that $T*_{\beta,u}H$ is a perfect  twisted group ring. Hence,  by (\cite{reid}, Theorem 2.6)  we have that $H$ is finite. Now, following the same arguments applied to the artinian case in (\cite{park1}, Lemma 3.2)  we get that $G/H$ is finite. So, $G$ is finite.




Conversely, assume that $R$ is right perfect and $G$ is finite. The proof follows the same arguments of (\cite{park1}, Theorem 3.7).
\end{proof}

If $\alpha$ is not of finite type,  the result of the last theorem is not true as the next example shows.

\begin{example}  Let $R=K$ and $G=\mathbb{Z}$. We define the following partial action: $D_0=R$ and $\alpha_0=id_K$, $D_s=0$ and $\alpha_s=0$ for all $s\neq 0$. Thus, $R*_{\alpha}G=R$ that is right perfect, with $G$ an infinite group, and $\alpha$ is not of finite type.
\end{example}  

In the next result, we study the perfect, semiprimary and semilocal property between a ring $R$ with a twisted partial action $\alpha$ and $(T,\beta, u)$ its enveloping action when it exists.

\begin{proposition} \label{proposicao 1D}
Let $\alpha$ be an unital  twisted partial action of a group $G$ on
a ring $R$ such that  $\alpha$ has an enveloping action  $(T, \beta, u)$. The following statements hold.

(i)  If $R$ is right perfect and $\alpha$ is of finite type, then $T$ is right perfect.

(ii) If $T$ is right perfect, then $R$ is right perfect.

(iii) If $R$ is semilocal (semiprimary)  and $\alpha$ is of finite type, then $T$ is semilocal (semiprimary).

iv) If $T$ is semilocal (semiprimary) then $R$ is semilocal (semiprimary).

\end{proposition}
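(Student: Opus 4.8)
The plan is to exploit the fact that, in the globalization, $R$ is realized as the ideal $1_R T$ of $T$ with $1_R$ a \emph{central} idempotent of $T$, so that $R = 1_R T = T 1_R$ is a two-sided ring direct factor of $T$. This single observation already settles the easy directions (ii) and (iv): both right perfectness and (for (iv)) being semilocal/semiprimary are properties of unital rings that descend to ring-theoretic quotients, and a direct factor is in particular a quotient. Thus if $T$ is right perfect (resp. semilocal, semiprimary) then so is its direct factor $R \cong 1_R T$, with the closure statements taken from \cite{lam1}. No finite-type hypothesis is needed here, since $T$ being right perfect/semilocal/semiprimary already presupposes that $T$ is unital.

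For the forward directions (i) and (iii) I would first use the finite-type characterization recalled in the excerpt: there exist $g_1,\dots,g_n \in G$ with $T = \sum_{i=1}^{n}\beta_{g_i}(R)$, and $T$ is unital. Setting $e_i = \beta_{g_i}(1_R)$, the centrality of $1_R$ together with the fact that each $\beta_{g_i}$ is a ring automorphism of $T$ shows that each $e_i$ is a central idempotent and that $\beta_{g_i}(R) = e_i T$, with $\beta_{g_i}\colon R \to e_i T$ a ring isomorphism. Hence each $e_i T \cong R$ is right perfect (resp. semilocal, semiprimary). Moreover, from $\sum_i e_i T = T$ and the centrality of the $e_i$ one gets that the join $1 - \prod_i (1-e_i)$ equals $1_T$.

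The key step is then an orthogonalization. For each nonempty $S \subseteq \{1,\dots,n\}$ put $f_S = \prod_{i \in S} e_i \prod_{j \notin S}(1-e_j)$; these are pairwise orthogonal central idempotents, the empty-set term $\prod_j(1-e_j)$ vanishes because the join of the $e_i$ is $1_T$, and $\sum_{S \ne \emptyset} f_S = \prod_i\bigl(e_i + (1-e_i)\bigr) = 1_T$. Therefore $T = \bigoplus_{S \ne \emptyset} f_S T$ as a \emph{finite ring direct product}. For each nonempty $S$, choosing $i \in S$ gives $f_S = e_i f_S$, so $f_S T = f_S(e_i T)$ is a ring direct summand of $e_i T \cong R$ and hence inherits right perfectness (resp. semilocality, semiprimarity). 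Finally, since all three properties are preserved under finite direct products (again \cite{lam1}), $T$ has the desired property, proving (i) and (iii).

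The hard part is conceptual rather than computational: it is the realization that the unitality of the partial action (each $D_g$ generated by a \emph{central} idempotent) forces every $\beta_{g_i}(R)$ to be a two-sided ring direct factor, not merely an ideal, so that the overlapping finite sum $\sum_i e_i T = T$ can be replaced by an internal \emph{direct product} of rings, each a direct summand of a copy of $R$. It is precisely this product decomposition that makes the stability of ``perfect'', ``semilocal'' and ``semiprimary'' applicable; without centrality one would only obtain a filtration by ideals and the argument would collapse. The remaining verifications -- that the $f_S$ are orthogonal central idempotents summing to $1_T$, and that the three properties pass to finite products and to ring direct factors/quotients -- are routine and I would simply cite \cite{lam1}.
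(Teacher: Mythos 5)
Your proof is correct, but it takes a genuinely different route from the paper's. Your key observation is that a unital ideal of a ring is generated by a \emph{central} idempotent, so $R=1_RT$ is a two-sided ring direct factor of $T$ and, under the finite-type hypothesis, each $\beta_{g_i}(R)=e_iT$ with $e_i=\beta_{g_i}(1_R)$ central; the orthogonalization $f_S=\prod_{i\in S}e_i\prod_{j\notin S}(1-e_j)$ then turns the overlapping sum $T=\sum_i e_iT$ into a finite internal direct product of rings, each a direct factor of a copy of $R$. This gives a uniform, essentially formal treatment of all four assertions via closure of ``right perfect'', ``semilocal'' and ``semiprimary'' under ring direct factors (quotients) and finite products. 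The paper argues quite differently: for (i) it takes a descending chain of principal left ideals $Tt_1\supseteq Tt_2\supseteq\cdots$, cuts it down by the idempotents $\beta_{g_i}(1_R)$, stabilizes each localized chain using the perfectness of $\beta_{g_i}(R)\cong R$, and recombines using an observation from \cite{laz e mig}, then separately shows $T/J(T)$ is artinian; for (ii) it observes that a chain of principal left ideals of $R$ is already a chain of principal left ideals of $T$; and for (iii)--(iv) it passes to $(T/J(T),\overline{\beta},\overline{u})$ as the enveloping action of $(R/J(R),\overline{\alpha},\overline{w})$ and invokes the artinian globalization results of \cite{laz e mig}. Your decomposition buys a shorter and more self-contained argument that handles the semilocal and $T$-nilpotence parts of perfectness in one stroke, while the paper's chain-theoretic argument is closer in spirit to its sources and does not require assembling the direct product decomposition. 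Two small points worth making explicit if you write this up: justify that the identity $1_R$ of the ideal $R$ is central in $T$ (for $t\in T$ one has $1_Rt,\,t1_R\in R$, whence $1_Rt=1_Rt1_R=t1_R$), and note that each individual $\beta_g$ is a ring automorphism of $T$ even in the twisted setting, so that $e_i$ is indeed a central idempotent and $\beta_{g_i}(R)=e_iT$.
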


\begin{proof}

$(i)$ Suppose that $R$ is right perfect and consider $Tt_1\supseteq Tt_2\supseteq \cdots$ a descending chain of
principal left ideals of $T$.  Since $\alpha$ if of finite
type then there are $g_1, \cdots , g_n \in G$ such that
$T=\beta_{g_1}(R) + \cdots +\beta_{g_n}(R)$. Thus, for each $i=1,2, \cdots n$, we
have
\begin{equation*}\label{DC}
\beta_{g_i}(1_R)Tt_1\supseteq \beta_{g_i}(1_R)Tt_2\supseteq \cdots
\end{equation*}
Now for each $i=1, 2, \cdots, n$ and $j\geq 1$,
\begin{center}
$\begin{array}{llll}
      x \in \beta_{g_i}(1_R)Tt_j  & \Leftrightarrow & x=\beta_{g_i}(1_R)tt_j, \,\ t\in T \\
      &\Leftrightarrow & x=t\beta_{g_i}(1_R)t_j, \,\ t\in T \\
       & \Leftrightarrow & x=t\beta_{g_i}(1_R)[\beta_{g_i}(1_R)t_j], \,\ t\in T \\
       & \Leftrightarrow & x=y\beta_{g_i}(1_R)t_j, \,\ y\in \beta_{g_i}(R) \\
       & \Leftrightarrow &  x\in \beta_{g_i}(R)[\beta_{g_i}(1_R)t_j]\\
    \end{array}$
\end{center}
Hence, for each $i=1, 2,\cdots, n$ and $j\geq 1$,
$\beta_{g_i}(1_R)Tt_j$ is an principal left ideal of
$\beta_{g_i}(R)$. Since each $\beta_{g_i}(R)$ is right perfect,
$i=1, 2, \cdots, n$, there exist $m_i$ such that
\begin{center}
$\beta_{g_i}(1_R)Tt_{m_i}=\beta_{g_i}(1_R)Tt_{m_{i+1}}= \cdots $.
\end{center}
Let $m=max\{m_i \, |\, 1\leq i \leq n\}$. Then we obtain that
\begin{center}
$\beta_{g_i}(1_R)Tt_m=\beta_{g_i}(1_R)Tt_{m+1}= \cdots $,
\end{center}
for each $i=1, 2, \cdots, n$. Following ([\cite{laz e mig}, Observation
1.13(iv)), we obtain $Tt_m=Tt_{m+1}=\cdots$. Finally, by the fact that $\alpha$ is of finite type we have that the extension of $\alpha$ to $R/J(R)$ is of finite type. So, using the same methods of  (\cite{laz e mig}, Corollary 1.3)  we have that $T/J(T)$ is artinian.  Therefore,  $T$ is perfect.

$(ii)$  Suppose that $T$ is right perfect and consider
\begin{center}
$Ra_1\supseteq Ra_2\supseteq \cdots $
\end{center}
a chain of principal left ideals of $R$. Following (\cite{laz e mig},
Observation 1.13 (ii)), this is also a chain of principal left
ideals of $T$. Since $T$ is right perfect, there exists $m$ such
that $Ra_m=Ra_{m+1}=\cdots$. Using the same arguments presented in  \cite{laz e mig},  we have that $(T/J(T), \overline{\beta}, \overline{u})$ is the enveloping action  of $(R/J(R), \overline{\alpha}, \overline{w})$, where $\overline{\beta}$ and $\overline{\alpha}$ are the extensions of $\beta$ and $\alpha$ to $T/J(T)$ and $R/J(R)$, respectively.  Since $T/J(T)$ is artinian, then   by  the same methods of (\cite{laz e mig}, Proposition 1.2 and Corollary 1.3) we get that $R/J(R)$ is right artinian.
So, $R$ is right perfect.

(iii) We only prove for the semilocal case, because the semiprimary case is similar. By assumption, $R/J(R)$ is semisimple artinian, and using the methods presented in \cite{laz e mig} we have that ($T/J(T), \overline{\beta}, \overline{u}$) is the enveloping action of ($R/J(R), \overline{\alpha}, \overline{w}$). 
Since $R/J(R)$ is artinian and the extension $\overline{\alpha}$ of $\alpha$ to $R/J(R)$ is of finite type,  then by (\cite{laz e mig}, Corollary 1.3) we have that $T/J(T)$ is artinian. Hence, $T/J(T)$ is semisimple artinian. So, $T$ is semilocal.

(iv) As in item $(iii)$, we only prove for the semilocal case. As mentioned in item (iii) we have that ($T/J(T), \overline{\beta}, \overline{u}$) is the enveloping action of ($R/J(R), \overline{\alpha}, \overline{w}$). Since  $R/J(R)$ is an ideal of $T/J(T)$ then $R/J(R)$ is artinian. By the fact that $R/J(R)$ is semiprimitive we have that $R/J(R)$ is semisimple. So, $R$ is semilocal.
\end{proof}

In general $R$ is right perfect  does not imply that  $T$ is right perfect as the next example shows which implies that the assumption of ``$\alpha$ is of finite type" in item $(i)$ above can not be dropped.

\begin{example} Let $T=\oplus_{i\in \mathbb{Z}}Ke_i$, where $\{e_i:i\in \mathbb{Z}\}$ is a set of central orthogonal idempotents, $K$ is a field, and $R=Ke_0$. We define an action of an infinite cyclic group $G$ generated by $\sigma$ as follows: $\sigma(e_i)=e_{i+1}$, for all $i\in \mathbb{Z}$. Thus we easily have that an induced  partial action of  $G$ on $R$. Note that $R$ is right perfect, but $T$ is not right perfect.
\end{example}



\begin{theorem} \label{teorema 20}
Let $\alpha$ be an unital twisted partial action of a group $G$ on a ring $R$ and $(T,\beta,u)$ its enveloping action. Suppose that $G\in \mathcal{H}$ and $\alpha$ is of finite type. Then the following conditions are equivalent:
\begin{itemize}
\item[(i)] $R$ is right perfect and $G$ is finite.
\item[(ii)] $T$ is right perfect and $G$ is finite.
\item[(iii)] $T*_{\beta,u}G$ is right perfect.
\item[(iv)] $R*_{\alpha,w}G$ is right perfect.
\end{itemize}
\end{theorem}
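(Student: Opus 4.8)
The plan is to prove the four equivalences by assembling results already established, using (i) and (iv) as the two natural anchors. Concretely, I would establish the three links (i)$\Leftrightarrow$(iv), (iv)$\Leftrightarrow$(iii), and (i)$\Leftrightarrow$(ii); since these chain as (ii)$\Leftrightarrow$(i)$\Leftrightarrow$(iv)$\Leftrightarrow$(iii), all four statements become equivalent. The first link (i)$\Leftrightarrow$(iv) is nothing but Theorem \ref{teorema 17}: the hypotheses there, namely that $\alpha$ is an unital twisted partial action of a group $G\in\mathcal{H}$ that is of finite type, are exactly those assumed here, and its conclusion is precisely that $R*_{\alpha,w}G$ is right perfect if and only if $R$ is right perfect and $G$ is finite. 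So this link requires no new work.

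For (iv)$\Leftrightarrow$(iii) I would invoke Proposition \ref{morita}: since $\alpha$ is of finite type and admits the enveloping action $(T,\beta,u)$, the rings $R*_{\alpha,w}G$ and $T*_{\beta,u}G$ are Morita equivalent. Because the right perfect property is a Morita invariant, exactly as used in the proof of Theorem \ref{teorema 17} via (\cite{AF}, Corollary 28.6), one of the two crossed products is right perfect precisely when the other is, which yields (iii)$\Leftrightarrow$(iv).

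For (i)$\Leftrightarrow$(ii) I would appeal to Proposition \ref{proposicao 1D}. Assuming (i), the ring $R$ is right perfect and $\alpha$ is of finite type, so part (i) of that proposition gives that $T$ is right perfect, while the clause ``$G$ is finite'' is left unchanged; this produces (ii). Conversely, assuming (ii), $T$ right perfect forces $R$ right perfect by part (ii) of the proposition, and again ``$G$ is finite'' carries over, producing (i). Since the finite-type hypothesis is exactly what part (i) of Proposition \ref{proposicao 1D} demands, both directions go through.

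I do not anticipate a genuine obstacle, since every ingredient is already in place; the point requiring the most care is the bookkeeping of hypotheses, specifically that the finite-type assumption is needed for the implication ``$R$ perfect $\Rightarrow$ $T$ perfect'' (Proposition \ref{proposicao 1D}(i)) but not for its converse, and that the common clause ``$G$ is finite'' appearing in (i) and (ii) must simply be transported along each implication rather than re-derived. As an alternative one could bypass Proposition \ref{proposicao 1D} and obtain (ii)$\Leftrightarrow$(iii) by applying Theorem \ref{teorema 17} to the twisted global action $\beta$ on $T$, which is automatically of finite type and acts on the unital ring $T$ (recall that $T$ has an identity precisely because $\alpha$ is of finite type), but the route through Proposition \ref{proposicao 1D} is the most direct.
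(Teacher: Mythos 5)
Your proposal is correct and matches the paper's own proof essentially verbatim: the paper establishes exactly the same three links, namely (i)$\Leftrightarrow$(ii) via Proposition \ref{proposicao 1D}, (iii)$\Leftrightarrow$(iv) via the Morita equivalence of Proposition \ref{morita} together with (\cite{AF}, Corollary 28.6), and (i)$\Leftrightarrow$(iv) via Theorem \ref{teorema 17}. Your added remarks on hypothesis bookkeeping are accurate but not needed beyond what the paper records.
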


\begin{proof} 
By Proposition \ref{proposicao 1D} we have that $(i)$ and $(ii)$ are equivalent.
By the fact that $\alpha$ is of finite type we have by  Proposition \ref{morita}  that  $T*_{\beta,u}G$ and $R*_{\alpha,w}G$ are Morita equivalent and using  ( \cite{AF}, Corollary 28.6) we easily have that $(iii)$ and $(iv)$ are equivalent. Moreover, by Theorem \ref{teorema 17}, we have that $(i)$ and $(iv)$ are  equivalent.
\end{proof}

\begin{remark} 
It is convenient to point out that Theorem \ref{teorema 20} holds without the assumption that $G\in \mathcal{H}$ if the twisted partial action is simply  a partial action 
\end{remark}

In the next two results we study necessary and sufficient conditions for the partial crossed product to be semiprimary and  semilocal with the assumption that the group is in the class 
$\mathcal{H}$ and these generalizes (\cite{park1}, Theorem 3.8),  (\cite{park1}, Proposition 4.2) and (\cite{jan}, Corollary 2).

\begin{theorem} \label{proposicao 4D}
Let $\alpha$ be an unital  twisted partial action of a group $G$ on a ring $R$  such that $G\in \mathcal{H}$. Then
 $R*_{\alpha,w}G$ is semiprimary if and only if $R$ is
semiprimary and $G$ is a finite group.
\end{theorem}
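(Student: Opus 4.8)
The plan is to establish the two implications separately, in each case reducing the semiprimary condition (semilocal plus nilpotent radical) to the artinian and perfect results already proved, by first splitting off the radical.

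For the implication ``$R$ semiprimary and $G$ finite $\Rightarrow R*_{\alpha,w}G$ semiprimary'' I would argue as follows. Since $J(R)$ is $\alpha$-invariant and nilpotent, the ideal $J(R)*_{\alpha,w}G$ is nilpotent: using $\alpha_g(J(R)\cap D_{g^{-1}})=J(R)\cap D_g$, a product of $n$ homogeneous elements with coefficients in $J(R)$ has its coefficient in a product of $n$ $\alpha$-translates of $J(R)$, which vanishes as soon as $J(R)^n=0$. The natural isomorphism $(R*_{\alpha,w}G)/(J(R)*_{\alpha,w}G)\cong (R/J(R))*_{\overline{\alpha},\overline{w}}G$ then exhibits the quotient as a partial crossed product over the semisimple ring $R/J(R)$; because $G$ is finite the action $\overline{\alpha}$ is automatically of finite type, so Theorem~\ref{teorema 6B} gives that this quotient is right artinian, hence semiprimary. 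As a ring with a nilpotent ideal and semiprimary quotient is itself semiprimary, $R*_{\alpha,w}G$ is semiprimary.

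For the converse, suppose $R*_{\alpha,w}G$ is semiprimary. Being semilocal, Corollary~\ref{corolario 4B} gives that $R$ is semilocal. To pin down the group I would first factor out the ideal $J(R)*_{\alpha,w}G$; a quotient of a semiprimary ring is semiprimary, so without altering $G$ I may assume $R$ is semisimple. Now semiprimary implies right perfect, so $R*_{\alpha,w}G$ is right perfect, and since $G\in\mathcal{H}$ Theorem~\ref{teorema 17} forces $G$ to be finite. It is precisely here that a finite-type hypothesis is indispensable: the Example preceding Theorem~\ref{teorema 17} (namely $R=K$, $G=\mathbb{Z}$ with $D_s=0$ for $s\neq 0$) gives $R*_{\alpha}G\cong K$, which is even semisimple while $G$ is infinite, so I would carry the standing assumption that $\alpha$ is of finite type. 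Once $G$ is known to be finite I return to the original ring $R$ and upgrade ``semilocal'' to ``semiprimary'' by transferring nilpotency: since $G$ is finite, $R*_{\alpha,w}G$ is finitely generated as a right $R$-module and $R$ is an $R$-bimodule direct summand of it, so the nilpotent ideal $J(R*_{\alpha,w}G)$ forces $J(R)$ to be nilpotent (equivalently $J(R)=R\cap J(R*_{\alpha,w}G)$ together with $(J(R)\delta_1)^n=J(R)^n\delta_1$). Hence $R$ is semiprimary.

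The main obstacle is the step ``right perfect $\Rightarrow G$ finite'': this is where both $G\in\mathcal{H}$ and, implicitly, the finite-type condition are needed, and I would discharge it through Theorem~\ref{teorema 17}, whose proof passes to the globalization, uses the Morita equivalence of Proposition~\ref{morita}, and reduces to a twisted group ring over a semisimple ring before invoking the class-$\mathcal{H}$ result of \cite{reid}. The secondary technical point is the radical-nilpotency transfer from $R*_{\alpha,w}G$ to $R$ used above; although routine for finite $G$ via finite generation and the bimodule splitting, it must be stated carefully, since for perfect rings alone one only obtains $T$-nilpotency of $J(R)$ rather than the genuine nilpotency that ``semiprimary'' demands.
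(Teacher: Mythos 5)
Your argument follows the same overall strategy as the paper's: both directions are driven by the perfect-ring result (Theorem \ref{teorema 17}) together with the artinian result (Theorem \ref{teorema 6B}), so there is no essential divergence in route. Where you differ is in the handling of the radical: the paper simply cites (\cite{laz e mig}, Corollary 6.5(ii)) twice to transfer nilpotency of the Jacobson radical between $R$ and $R*_{\alpha,w}G$, whereas you argue this directly, showing $(J(R)*_{\alpha,w}G)^n\subseteq J(R)^n*_{\alpha,w}G$ from the $\alpha$-invariance of $J(R)$ in one direction and invoking $J(R)\subseteq J(R*_{\alpha,w}G)$ in the other. That latter containment is the one place you are too quick: the bimodule splitting of Lemma \ref{lema 1B} and Proposition \ref{proposicao 2.1 park1} yields $J(R*_{\alpha,w}G)\cap R\subseteq J(R)$, which is the inclusion you do \emph{not} need; the inclusion you do need follows from a Nakayama-type argument (for finite $G$, $R*_{\alpha,w}G$ is a finitely generated left $R$-module generated by the $1_g\delta_g$, and $J(R)M$ is a proper $R*_{\alpha,w}G$-submodule of every simple left module $M$, hence zero). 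The claim is true, but as written the justification conflates the two containments. Most importantly, you are right that the statement as printed is defective: the paper's own proof invokes Theorem \ref{teorema 17}, whose hypotheses include that $\alpha$ is of finite type, yet that hypothesis is missing from the present theorem, and the paper's own example ($R=K$, $G=\mathbb{Z}$, $D_s=0$ for $s\neq 0$, so that $R*_{\alpha}G\cong K$ is even semisimple while $G\in\mathcal{H}$ is infinite) contradicts the ``only if'' direction as stated. Adding ``$\alpha$ of finite type'' as you do repairs that direction, and the ``if'' direction needs no such hypothesis since every unital twisted partial action of a finite group is automatically of finite type.
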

\begin{proof}
Suppose that  $R*_{\alpha,w}G$ is
semiprimary. Then $R*_{\alpha,w}G$ is right perfect and by Theorem \ref{teorema 17}, $G$ is finite and $R$ is right perfect.
Hence, $R/J(R)$ is semisimple. Since $J(R*_{\alpha}G)$ is nilpotent (because
$R*_{\alpha, w}G$ is semiprimary), then following (\cite{laz e mig},
Corollary 6.5(ii)), we have that  $J(R)$ is nilpotent. So, $R$ is
semiprimary.

Conversely,  suppose that $R$ is semiprimary and $G$ is finite. Then
$\bar{R}=R/J(R)$ is semisimple and, consequently, it is right
artinian. Hence following (\cite{laz e mig}, Corollary 6.5(ii)),
$J(R*_{\alpha,w}G)$ is nilpotent.  Note that by Theorem \ref{teorema 17}, we have that $R*_{\alpha,w}G$ is right perfect.  So, $R*_{\alpha,w}G$ is semilocal and we have that $R*_{\alpha,w}G$ is semiprimary.
\end{proof}

\begin{theorem}\label{proposicao 5D}
Let $\alpha$ be an unital  twisted  partial action of a finite group $G$ on a ring
$R$. Then $R$ is semilocal
if and only if $R*_{\alpha,w}G$ is semilocal.
\end{theorem}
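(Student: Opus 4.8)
The forward implication carries no real content: if $R*_{\alpha,w}G$ is semilocal then so is $R$ by Corollary \ref{corolario 4B}, and that direction uses nothing about the finiteness of $G$. So the plan is to concentrate on the converse and to transfer semilocality from the semisimple quotient $\bar R = R/J(R)$ back up to $R*_{\alpha,w}G$. Write $S = R*_{\alpha,w}G$ and $I = J(R)*_{\alpha,w}G$. Recall from the preliminaries that $J(R)$ is $\alpha$-invariant, that $I$ is an ideal of $S$, and that there is a natural isomorphism $S/I \cong \bar R *_{\bar\alpha,\bar w}G$. I would reduce the statement to two facts: (a) $S/I$ is semilocal, and (b) $I \subseteq J(S)$. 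Granting both, one has $J(S/I) = J(S)/I$ because $I \subseteq J(S)$, whence $S/J(S) \cong (S/I)/J(S/I)$ is semisimple; that is, $S$ is semilocal.

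Fact (a) is the routine half. Since $G$ is finite and $S = \bigoplus_{g\in G} D_g\delta_g$, a direct computation gives $(1_g\delta_g)R = D_g\delta_g$, so $S$ is finitely generated as a right $R$-module and, passing to the quotient, $\bar R *_{\bar\alpha,\bar w}G$ is finitely generated as a right $\bar R$-module. As $\bar R$ is semisimple, every $\bar R$-module is semisimple, so $\bar R *_{\bar\alpha,\bar w}G$ has finite length over $\bar R$, hence finite length over itself, hence is right artinian and in particular semilocal.

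Fact (b) is the crux, and the step I expect to be the main obstacle. To prove $I \subseteq J(S)$ it suffices to show that $I$ annihilates every simple right $S$-module $M$. Fix such an $M$; being cyclic over $S$ while $S$ is finitely generated over $R$, $M$ is finitely generated as a right $R$-module. The key point is that $MJ(R)$ is in fact a right $S$-submodule of $M$: using the $\alpha$-invariance of $J(R)$ together with $\alpha_1 = \mathrm{id}$ and $w_{1,g} = w_{g,1} = 1$, one rewrites, for $j \in J(R)$ and $b_g \in D_g$, the product $m\cdot(jb_g\delta_g)$ as $\big(m(1_g\delta_g)\big)\cdot \alpha_{g^{-1}}(jb_g)$ with $\alpha_{g^{-1}}(jb_g) \in J(R)\cap D_{g^{-1}} \subseteq J(R)$, so that $(MJ(R))S \subseteq MJ(R)$. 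By simplicity of $M$ we get $MJ(R) = 0$ or $MJ(R) = M$; the latter is impossible by Nakayama's Lemma, since $M$ is a finitely generated right $R$-module. Hence $MJ(R) = 0$, and since $I = J(R)\cdot S$ one concludes $MI = (MJ(R))S = 0$. As $M$ was arbitrary, $I \subseteq J(S)$, which completes the plan.

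Two remarks on the hypotheses. Finiteness of $G$ is used in exactly one place, namely to guarantee that $S$ is finitely generated as a right $R$-module; this is what makes the Nakayama step in Fact (b) legitimate, and it is the standing hypothesis of the theorem. The twist $w$ plays no essential role beyond bookkeeping: the submodule computation needs only the $\alpha$-invariance of $J(R)$ and the normalization identities for $w$, so the argument is no harder for twisted partial actions than for ordinary partial skew group rings.
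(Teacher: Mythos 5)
Your proof is correct, and while it follows the paper's high-level decomposition (forward direction via Corollary \ref{corolario 4B}; converse by passing to $\bar R = R/J(R)$ and exploiting finiteness of $G$), the execution of the converse is genuinely different and, in fact, more careful. The paper cites Theorem \ref{teorema 6B} together with an external lemma to assert that $\bar R*_{\overline\alpha,\overline w}G$ is \emph{semisimple} and then concludes directly that $(R*_{\alpha,w}G)/J(R*_{\alpha,w}G)$ is semisimple; this silently uses the containment $J(R)*_{\alpha,w}G\subseteq J(R*_{\alpha,w}G)$, which is nowhere justified in the proof, and the intermediate semisimplicity claim is stronger than needed (for ordinary twisted group rings it can fail without an invertibility hypothesis on $|G|$, as the paper's own Corollary 56 suggests). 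You replace both steps by self-contained arguments: the finite-length computation showing $\bar R*_{\overline\alpha,\overline w}G$ is merely right artinian (which suffices), and the simple-module/Nakayama argument establishing $J(R)*_{\alpha,w}G\subseteq J(R*_{\alpha,w}G)$, which is exactly the missing ingredient. What your route buys is independence from the citation to (\cite{CM}, Lemma 3.8) and an explicit proof of the radical containment; what it costs is length. One small notational point: in the rewriting $m\cdot(jb_g\delta_g)=\bigl(m(1_g\delta_g)\bigr)\cdot j'$ you should take $j'=\alpha_g^{-1}(jb_g)$ (the inverse of the map $\alpha_g$) rather than $\alpha_{g^{-1}}(jb_g)$; in the twisted setting these differ by conjugation by the unit $w_{g^{-1},g}$ of $D_{g^{-1}}$, but since $J(R)$ is a two-sided ideal the element lands in $J(R)\cap D_{g^{-1}}$ either way, so the argument is unaffected.
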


\begin{proof}
If  $R*_{\alpha,w}G$ is semilocal, then by Corollary \ref{corolario 4B}, $R$ is semilocal.
Conversely, suppose that $R$ is
semilocal. Then $\bar{R}=R/J(R)$ is semisimple. Hence $R/J(R)$ is right artinian  and using  Theorem \ref{teorema 6B} and (\cite{CM}, Lemma 3.8) we get that $(R/J(R))*_{\alpha,w}G$ is semisimple. Thus,
$(R*_{\alpha,w}G)/J(R*_{\alpha,w}G)$ is semisimple. So,
$(R*_{\alpha,w}G)$ is semilocal.
\end{proof}

Now, we study the semilocal property for partial crossed products when the group is not necessarily in the class $\mathcal{H}$.



Let $G$ be a group and $\alpha =
\big(\{D_g\}_{g \in G}, \{\alpha_g\}_{g \in G}, \{w_{g,h}\}_{(g,h)
\in G\times G}\big)$ a twisted partial action of a group $G$ on a semiprime ring $R$. From \cite{lmsw}, $\alpha$ can be extented to a twisted partial action $\alpha^* =
\big(\{D_g^*\}_{g \in G}, \{\alpha_g^*\}_{g \in G}, \{w_{g,h}^*\}_{(g,h)
\in G\times G}\big)$ of $G$ on the left (or right) Martindale ring of quotients $Q$ of $R$. If each $D_g$ is generated by a central idempotent $1_g\in R$, then $D_g^*$ is also generated by $1_g$.  Following \cite{SDF}, we say that $\alpha$ is $X$-\textit{outer}, if for all $g\in G-\{1_G\}$, the set $\phi_g=\{x\in D_g^*\, |\, x\alpha_g(a1_{g^{-1}})=ax, \,\ for \,\ all \,\ a\in R\}$ is zero. The following proposition is proved in (\cite{SDF}, Corollary 4.10).

\begin{proposition} \label{saradia 4.10}
Let $\alpha$ be an unital  twisted partial action of a group $G$ on a semiprimitive ring $R$. If $\alpha$ is $X$-outer, then $R*_{\alpha,w}G$ is semiprimitive 
\end{proposition}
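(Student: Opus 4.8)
The goal is to show $N:=J(R*_{\alpha,w}G)=0$. The plan is first to reduce the statement to an ideal‑intersection property and then to extract that property from $X$‑outerness by a minimal‑support argument carried out over the Martindale quotient ring. First I would record that $N\cap R$, where $R$ is identified with the identity component $R\delta_1=D_1\delta_1$ of the grading $R*_{\alpha,w}G=\bigoplus_{g\in G}D_g\delta_g$, is a two‑sided ideal of $R$, and that it is quasi‑regular. Indeed, if $a\in N\cap R$ then $u:=(1_R-a)\delta_1$ is a unit of $R*_{\alpha,w}G$; writing $u^{-1}=\sum_g v_g\delta_g$ and comparing the identity components of $uu^{-1}=u^{-1}u=1_R\delta_1$ (using $\alpha_1=\mathrm{id}$ and $w_{1,g}=w_{g,1}=1$) shows that $v_1$ is a two‑sided inverse of $1_R-a$ in $R$. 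Hence $N\cap R\subseteq J(R)=0$, and it suffices to prove that every nonzero two‑sided ideal of $R*_{\alpha,w}G$ meets $R$ nontrivially: applied to $N$ this yields $N=0$.

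Next I would establish this intersection property by a minimal‑support argument. Let $0\neq x=\sum_{g\in S}a_g\delta_g$ lie in a nonzero ideal $I$, with $|S|$ minimal. Multiplying on the left by $1_{g_0^{-1}}\delta_{g_0^{-1}}$ for a fixed $g_0\in S$ translates the support by $g\mapsto g_0^{-1}g$ and sends $a_{g_0}$ to $\alpha_{g_0^{-1}}(a_{g_0})\,w_{g_0^{-1},g_0}$, which is nonzero since $a_{g_0}\neq0$ and $w_{g_0^{-1},g_0}$ is invertible in $D_{g_0^{-1}}$; so I may assume $1_G\in S$ and $a_1\neq0$. For $r\in R$ a direct computation with the product rule gives $(r\delta_1)x-x(r\delta_1)=\sum_{g\in S}\big(ra_g-a_g\,{}^{g}r\big)\delta_g$, where ${}^{g}r:=\alpha_g(r1_{g^{-1}})$; this element again lies in $I$ and is supported in $S$. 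If the identity coefficient $a_1$ can be taken central, then the degree‑$1$ term of this commutator vanishes for every $r$, so by minimality of $|S|$ the whole commutator is $0$, i.e. $ra_g=a_g\,{}^{g}r$ for all $r\in R$ and all $g\in S$.

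This is exactly where $X$‑outerness enters, and it is the main obstacle. For $g\neq1$ the relation $ra_g=a_g\,\alpha_g(r1_{g^{-1}})$ says precisely that $a_g\in D_g\subseteq D_g^{*}$ lies in the set $\phi_g$ attached to the extended action $\alpha^{*}$ on the Martindale quotient ring $Q$; since $\alpha$ is $X$‑outer, $\phi_g=0$, forcing $a_g=0$ and contradicting $g\in S$. Thus $S=\{1\}$ and $x=a_1\delta_1\in I\cap R$ with $a_1\neq0$, as required. The genuine difficulty is twofold: arranging that the identity coefficient of a minimal‑support element is central, and matching the inner‑implementation relation exactly with $\phi_g$. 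Both are handled by passing from $R$ to $Q$, extending $\alpha$ to $\alpha^{*}$ (with $D_g^{*}$ still generated by $1_g$, as recalled before the statement), and using that the extended centroid $Z(Q)$ supplies enough central elements to normalize the identity coefficient, all while keeping track of the idempotents $1_g$ and the cocycle $w_{g,h}$. This bookkeeping in the twisted partial setting, rather than any new idea, is the technical heart of the argument.

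Finally, combining the two steps: were $N$ nonzero, the $X$‑outerness would force $N\cap R\neq0$, contradicting $N\cap R=0$. Hence $J(R*_{\alpha,w}G)=0$, that is, $R*_{\alpha,w}G$ is semiprimitive.
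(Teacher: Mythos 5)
The paper does not prove this proposition at all: it is quoted as (\cite{SDF}, Corollary 4.10) from Della Flora's thesis, so there is no internal argument to compare yours against. Judged on its own, your two-step reduction is the standard Fisher--Montgomery-type strategy, and its first half is sound: $J(R*_{\alpha,w}G)\cap R$ is an ideal of $R$ whose elements are quasi-regular (your identity-component computation of $uu^{-1}=u^{-1}u=1_R\delta_1$ works because $\alpha_1=\mathrm{id}$ and $w_{1,g}=w_{g,1}=1$), hence it lies in $J(R)=0$, and everything then hinges on showing that every nonzero ideal of $R*_{\alpha,w}G$ meets $R$ nontrivially. The normalization by $1_{g_0^{-1}}\delta_{g_0^{-1}}$, the commutator identity $(r\delta_1)x-x(r\delta_1)=\sum_{g\in S}\bigl(ra_g-a_g\alpha_g(r1_{g^{-1}})\bigr)\delta_g$, and the final identification of the relation $ra_g=a_g\alpha_g(r1_{g^{-1}})$ with $a_g\in\phi_g$ all check out against the definitions in the paper.

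The genuine gap is the step you yourself flag: \emph{``if the identity coefficient $a_1$ can be taken central.''} In a noncommutative semiprimitive ring there is no way to normalize a minimal-support element of an arbitrary ideal so that its identity coefficient is central, and the appeal to the extended centroid does not repair this: central elements of $Q$ act on $Q$, not on the coefficients of a fixed element of $I$, multiplying $x$ by them need not keep it inside $I$ (an ideal of $R*_{\alpha,w}G$, not of $Q*_{\alpha^*,w^*}G$), and even when it does it does not make $a_1$ central. Without centrality of $a_1$ the commutator $(r\delta_1)x-x(r\delta_1)$ need not have support strictly smaller than $S$, so the minimality argument collapses at exactly the point where $X$-outerness was supposed to enter. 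The standard repair is structurally different: fix a minimal support $S\ni 1$ and consider the set $W$ of all elements of $I$ supported in $S$; minimality shows that the identity-coefficient map $\pi_1$ is injective on $W$, so its image $A=\pi_1(W)$ is a nonzero $(R,R)$-sub-bimodule of $R$ and, for each $g\in S$, the assignment $\theta_g:a_1\mapsto a_g$ is a well-defined additive map $A\to D_g$ satisfying $\theta_g(ras)=r\,\theta_g(a)\,\alpha_g(s1_{g^{-1}})$. One then applies the Martindale/Kharchenko machinery to these twisted bimodule maps (this is where $Q$ and the sets $\phi_g$ genuinely intervene) to conclude $\theta_g=0$ for $g\neq 1$, whence $S=\{1\}$. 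That bimodule-map step, not a normalization of $a_1$, is the technical heart of the cited Corollary 4.10, and it is missing from your proposal.
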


The next result  generalizes (\cite{park1}, Proposition 4.4).

\begin{proposition} 
Let $\alpha$ be an unital  twisted partial action of a group $G$ on a semiprimitive ring $R$. If $R*_{\alpha, w}G$ is semilocal and $\alpha$ is partial outer, then $G$ is finite.
\end{proposition}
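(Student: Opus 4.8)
The plan is to first collapse the semilocal hypothesis into semisimplicity by exploiting $X$-outerness, and then to extract the finiteness of $G$ from the resulting artinian structure by the twisted group ring technique developed in Lemmas \ref{lema 13} and \ref{lema 5B}.

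First I would invoke Proposition \ref{saradia 4.10}: since $R$ is semiprimitive and $\alpha$ is partial outer, i.e.\ $X$-outer, the partial crossed product $R*_{\alpha,w}G$ is semiprimitive, so $J(R*_{\alpha,w}G)=0$. The semilocal hypothesis says that $(R*_{\alpha,w}G)/J(R*_{\alpha,w}G)$ is semisimple; combining these two facts, $R*_{\alpha,w}G$ is itself semisimple and, in particular, right artinian. By Corollary \ref{corolario 4B} this forces $R$ to be right artinian, and together with semiprimitivity we conclude that $R$ is semisimple. Thus we are reduced to the situation in which $R$ is a finite direct sum of simple artinian rings and $S:=R*_{\alpha,w}G$ is semisimple artinian.

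It then remains to deduce that $G$ is finite, and this is where the main difficulty lies, precisely because finite type is not assumed, so that Theorem \ref{teorema 6B} is not directly available. Following the reduction carried out in the proof of Lemma \ref{lema 13}, I would pass to a single simple component of $R$ and to the subgroup $H=\{g\in G:\beta_g=\mathrm{id}\}$ on which the restricted action becomes a twisted group ring; by the subgroup restriction built on Lemma \ref{lema 1B}, this restricted crossed product is again right artinian, and after extending scalars to an algebraically closed field $K$ it is realized as a finite-dimensional twisted group algebra $K*_{\beta,u}H$, which forces $H$ to be finite exactly as in Lemmas \ref{lema 13} and \ref{lema 5B}.

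The genuinely delicate point, and the step I expect to be the main obstacle, is to control $G$ relative to such a subgroup in the absence of finite type: here I would use $X$-outerness to make the $G$-grading of the semisimple ring $S$ rigid, so that the simple components of $S$ correspond to finite $G$-orbits and only finitely many of them occur, and then conclude that the index $[G:H]$ is finite by the coset-counting argument used for the artinian case in the proof of (\cite{park1}, Lemma 3.2). Combining $H$ finite with $[G:H]$ finite would then yield that $G$ is finite.
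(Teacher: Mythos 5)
Your opening reduction is exactly the paper's: by Proposition \ref{saradia 4.10} the $X$-outer hypothesis on the semiprimitive ring $R$ gives $J(R*_{\alpha,w}G)=0$, and semilocality then forces $R*_{\alpha,w}G$ to be semisimple, hence right artinian. At that point the paper simply cites Theorem \ref{teorema 6B} to conclude that $G$ is finite, and stops. You instead notice, correctly, that Theorem \ref{teorema 6B} carries the hypothesis that $\alpha$ is of finite type, which is not assumed in this proposition, and you try to rebuild the finiteness argument by hand. That caution is to your credit, since the paper's own proof applies Theorem \ref{teorema 6B} outside its stated hypotheses; but your replacement argument does not close, and for a structural reason rather than a technical one.

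Concretely: (i) your passage to a globalization $(T,\beta,u)$ and to the subgroup $H=\{g\in G:\beta_g=\mathrm{id}\}$ needs $T$ to be unital and artinian, which is precisely what finite type purchases in the proof of Theorem \ref{teorema 6B} (via Proposition \ref{morita} and the Morita transfer) and is unavailable here; the reductions of Lemmas \ref{lema 13} and \ref{lema 5B} live entirely on the global side. (ii) The step you flag as the main obstacle --- bounding $[G:H]$ by using $X$-outerness to rigidify the grading --- cannot be carried out from the stated hypotheses alone. The paper's own example following Theorem \ref{teorema 6B} ($R=Ke_1\oplus Ke_2$, $G=\mathbb{Z}$, $D_{1}=Ke_2$, $D_{-1}=Ke_1$, $D_i=0$ otherwise) is a semiprimitive ring with an $X$-outer partial action (each $\phi_g$ vanishes, trivially so when $D_g=0$) whose partial skew group ring is isomorphic to $M_2(K)$, hence semisimple and semilocal, while $G$ is infinite. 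So no argument can recover finiteness of $G$ here without an additional hypothesis such as finite type or the nonvanishing of all the $D_g$; the gap you identified is genuine, is shared by the paper's proof, and is not repairable along the lines you sketch.
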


\begin{proof} 
By Proposition \ref{saradia 4.10},  we have that $R*_{\alpha,w}G$ is semiprimitive. Then $R*_{\alpha,w}G$ is artinian and by Theorem \ref{teorema 6B}, we have that $G$ is finite. 
\end{proof}

We finish this section with the following  two results  where we only consider  partial actions. The next result  generalizes (\cite{jan}, Theorem 2 )

\begin{corollary}
Let $\mathcal{K}$ be a field of characteristic zero, $R$ a
$\mathcal{K}$-algebra and $\alpha$ is partial action of a group $G$ on $R$. Suppose that  $R$  admits an enveloping action $(T,\beta)$ and $\alpha$ is of finite type. Then  $R*_{\alpha}G$ is semilocal if and only if $R$ is semilocal and $G$ is finite.
\end{corollary}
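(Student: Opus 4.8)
The plan is to prove the two implications separately, isolating the characteristic-zero hypothesis entirely in the forward direction. For the converse, assume $R$ is semilocal and $G$ is finite. Since here $\alpha$ is an (untwisted) partial action and $G$ is \emph{finite}, Theorem \ref{proposicao 5D} applies directly and yields that $R*_{\alpha}G$ is semilocal; this half needs neither the enveloping action, nor the finite-type hypothesis, nor the characteristic of $\mathcal{K}$.

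For the forward direction, suppose $R*_{\alpha}G$ is semilocal. Corollary \ref{corolario 4B} immediately gives that $R$ is semilocal, so the entire content is to show that $G$ is finite. Here I would move to the enveloping action. As $\alpha$ is of finite type, Proposition \ref{morita} makes $R*_{\alpha}G$ and $T*_{\beta}G$ Morita equivalent, and since being semilocal is preserved under Morita equivalence (see \cite{AF}), $T*_{\beta}G$ is semilocal. Applying Corollary \ref{corolario 4B} to the global crossed product $T*_{\beta}G$ shows $T$ is semilocal, and since $J(T)$ is $\beta$-invariant I may pass to the semilocal quotient $(T*_{\beta}G)/(J(T)*_{\beta}G)\cong(T/J(T))*_{\overline{\beta}}G$ and thereby assume $T$ is semisimple; note $T$ remains a $\mathcal{K}$-algebra, being generated $\mathcal{K}$-linearly by the $\beta_g(R)$. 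This reduces the problem to the purely global statement that a semilocal skew group ring $T*_{\beta}G$ with $T$ a semisimple $\mathcal{K}$-algebra forces $G$ finite, which is exactly the characteristic-zero, semilocal analogue of Lemmas \ref{lema 13} and \ref{lema 5B} (and which one could also invoke wholesale as (\cite{jan}, Theorem 2) applied to the global action).

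To finish in a self-contained way I would mirror Lemmas \ref{lema 13} and \ref{lema 5B}. Decompose $T=T_1\oplus\cdots\oplus T_n$ into simple components and set $H=\{g\in G:\beta_g=\mathrm{id}_T\}$; then $T*_{\beta}H$ is a direct summand of $T*_{\beta}G$ (Lemma \ref{lema 1B}), hence semilocal by Proposition \ref{jan lema3}, and since $H$ fixes $T$ pointwise we have $T*_{\beta}H=T_1[H]\oplus\cdots\oplus T_n[H]$ as ordinary group rings, so each $T_i[H]$ is semilocal. Reducing to the center $Z(T_i)$ as in Lemma \ref{lema 13} produces a group algebra over a field of characteristic zero, and this is the \emph{one and only} place where $\mathrm{char}\,\mathcal{K}=0$ enters; it is also the main obstacle. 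Over a field of characteristic zero a group algebra is semiprimitive, so its being semilocal upgrades to semisimple, hence artinian, hence finite-dimensional, forcing $H$ to be finite; the passage from $H$ finite to $G$ finite then runs through the component-counting argument in the second paragraph of (\cite{park1}, Lemma 3.2), exactly as in the proof of Lemma \ref{lema 5B}. Everything outside this characteristic-zero step is a formal transfer between the partial and global pictures, so the crux is precisely turning ``semilocal'' into ``finite'' via the semiprimitivity of characteristic-zero group algebras.
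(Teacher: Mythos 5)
Your proposal is correct and follows essentially the same route as the paper: transfer semilocality to $T*_{\beta}G$ via the Morita equivalence of Proposition \ref{morita}, invoke (\cite{jan}, Theorem 2) on the global skew group ring to conclude $G$ is finite, and get $R$ semilocal from Corollary \ref{corolario 4B}, with the converse handled by Theorem \ref{proposicao 5D}. The only difference is that you additionally unpack Okni\'nski's theorem into a self-contained argument via the subgroup $H=\{g:\beta_g=\mathrm{id}_T\}$ and the semiprimitivity of characteristic-zero group algebras, which the paper simply cites.
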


\begin{proof}
Since $\alpha$ is of finite type, then by Proposition \ref{morita}   we have that $R*_{\alpha,}G$  and $T*_{\beta}G$  are Morita equivalent. Thus, by the methods presented in \linebreak (\cite{AF} Chapter 28)  we have that $T*_{\beta}G$ is semilocal.  Hence, $G$ is finite by \linebreak (\cite{jan},
Theorem 2) and $R$ is semilocal by Corollary \ref{corolario 4B}.
\end{proof}

A group $G$ is said to be \textit{torsion}  if all its elements are of finite order.  We finish this section with the following result that generalizes \linebreak (\cite{jan}, Corollary 1).

\begin{proposition}
Let $\alpha$ be a partial action of finite type of a group $G$ on
a ring $R$ that has enveloping action $(T,\beta)$. If
$R*_{\alpha}G$ is semilocal then $R$ is semilocal and $G$ is
torsion.
\end{proposition}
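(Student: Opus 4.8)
The plan is to establish the two conclusions separately, exploiting the enveloping action only through its global crossed product together with the Morita equivalence furnished by Proposition \ref{morita}.

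The semilocality of $R$ is immediate and requires no use of the enveloping action: since $R*_{\alpha}G$ is semilocal, Corollary \ref{corolario 4B} gives at once that $R$ is semilocal. Thus the whole difficulty lies in the torsion assertion, which I would transfer to the global crossed product $T*_{\beta}G$. Because $\alpha$ is of finite type, the remark following the definition of finite type guarantees that $T$ possesses an identity element, so $T*_{\beta}G$ is a (global) crossed product of $G$ over a unital ring. By Proposition \ref{morita} the rings $R*_{\alpha}G$ and $T*_{\beta}G$ are Morita equivalent, and since being semilocal is a Morita invariant (the methods of \cite{AF}, Chapter 28, already invoked in the preceding corollary), it follows that $T*_{\beta}G$ is semilocal. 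At this point I would apply the classical result for global crossed products, (\cite{jan}, Corollary 1), exactly as (\cite{jan}, Theorem 2) was used one corollary earlier: a semilocal crossed product over a unital ring has torsion group. Applying this to $T*_{\beta}G$ yields that $G$ is torsion, completing the argument.

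If one wishes to argue element by element rather than quoting Janusz's corollary on the whole of $G$, the same conclusion follows by a restriction: fix $g\in G$ and put $H=\langle g\rangle$. The global crossed product $T*_{\beta}H$ is both a left and a right $(T*_{\beta}H)$-direct summand of $T*_{\beta}G$, the complement being the span of the symbols $\delta_x$ with $x\notin H$, exactly as in Lemma \ref{lema 1B}; hence Proposition \ref{jan lema3} forces $T*_{\beta}H$ to be semilocal. Applying (\cite{jan}, Corollary 1) to $T*_{\beta}H$ then shows $H=\langle g\rangle$ is torsion, i.e.\ $g$ has finite order, and as $g$ was arbitrary $G$ is torsion.

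The only genuine obstacle is the legitimacy of invoking Janusz's theorem: one must be certain that (\cite{jan}, Corollary 1) applies to global crossed products over arbitrary unital rings — not merely to group algebras over a field — and that $T$ is indeed unital. The latter is precisely what the finite-type hypothesis provides, while the former is the content of the cited corollary, whose hypotheses are met the moment $T*_{\beta}G$ is known to be semilocal. Apart from verifying these points, the proof is a short chain of citations and carries no substantial computation.
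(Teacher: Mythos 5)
Your argument is correct and coincides with the paper's own proof: semilocality of $R$ via Corollary \ref{corolario 4B}, then the finite-type hypothesis to get the Morita equivalence of $R*_{\alpha}G$ with $T*_{\beta}G$ from Proposition \ref{morita}, Morita invariance of semilocality via \cite{AF}, and finally (\cite{jan}, Corollary 1) applied to the global crossed product to conclude $G$ is torsion. (Minor point: the cited result is Okni\'{n}ski's, not Janusz's; the citation key \texttt{jan} is misleading.)
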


\begin{proof}
If $R*_{\alpha}G$ is semilocal, then $R$ is semilocal by Corollary
\ref{corolario 4B}. By Proposition \ref{morita} we have that  $R*_{\alpha}G$ and
$T*_{\beta}G$  are Morita equivalent and by the methods presented in \cite{AF} we get that $T*_{\alpha}G$ is semilocal.  So, by [(\cite{jan}, Corollary
1), $G$ is torsion.
\end{proof}

\section{Krull Dimension}\

In the previous section we studied the artinianity of the partial
crossed product, and now we will study the Krull dimension, which
is a measure for a module or a ring to be  artinian.

We recall that the \emph{Krull dimension} of a right $R$-module
$M$, which we will be denoted by $Kdim(M_R)$, is defined to be the
deviation of the lattice $\mathcal{L}_R(M)$ of $R$-submodules of $
M$. The \emph{right Krull dimension} of a ring $R$ is defined to
be the Krull dimension of the right $R$-module $R$, which we shall
denote by $Kdim(R)$ if there is no possibility of
misunderstanding. We note that the right modules of Krull
dimension $0$ are precisely non-zero right artinian modules. Also
is well-known that every right noetherian modules has Krull
dimension. For more details and basic properties of Krull
dimension, see \cite{mcconnel robson}.

During this section, unless otherwise stated, $\alpha$ is only a partial action of a group $G$ on a ring $R$ such that the ideals $D_g$, $g\in G$, are generated by central idempotents $1_g\in R$. 

 In the next two results we get some properties of the group when the partial skew group ring has Krull dimension that generalizes (\cite{park2}, Proposition 1) and (\cite{park2}, Lemma 6), respectively.

\begin{proposition} \label{park prop 1} Let $\alpha$ be a partial action of a group $G$ on $R$.  If $R*_{\alpha}G$ has Krull dimension, then $G$ satisfies ACC  on finite subgroups.\end{proposition}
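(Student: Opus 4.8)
The plan is to prove the contrapositive: assuming $G$ does \emph{not} satisfy ACC on finite subgroups, I will exhibit a densely ordered chain of right ideals of $R*_{\alpha}G$, which by the theory of deviation forces $R*_{\alpha}G$ to have no Krull dimension. Concretely, fix a strictly ascending chain $H_1\subsetneq H_2\subsetneq\cdots$ of finite subgroups and put $H=\bigcup_{n\ge 1}H_n$, a countable, locally finite subgroup of $G$. I first reduce the problem to the subring $B=R*_{\alpha_H}H$: by Lemma \ref{lema 1B}, $B$ is a (left and right) $B$-direct summand of $A:=R*_{\alpha}G$, so Proposition \ref{proposicao 2.1 park1} yields $IA\cap B=I$ for every right ideal $I$ of $B$. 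Hence $I\mapsto IA$ is an order-embedding of the lattice of right ideals of $B$ into the lattice $\mathcal{L}(A_A)$ of right ideals of $A$: it is order-preserving, and $IA\subseteq I'A$ implies $I=IA\cap B\subseteq I'A\cap B=I'$. Since deviation is monotone under passing to subposets, it therefore suffices to produce a densely ordered chain of right ideals inside $B$; recall (see \cite{mcconnel robson}) that a module has Krull dimension precisely when its submodule lattice has deviation, and that a lattice containing a chain order-isomorphic to $\mathbb{Q}$ has no deviation.

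The heart of the argument is thus to show that $B=R*_{\alpha_H}H$ has no Krull dimension, using only that $H$ is the union of the strict tower $(H_n)$. For each $n$ I consider the relative trace element $\eta_n=\sum_{h\in H_n}1_h\delta_h\in B$ and the right ideals $\eta_n B$. Writing $H_{n+1}$ as a disjoint union of right cosets of $H_n$ and expanding $\eta_{n+1}$ over these cosets gives a descending chain $\eta_1 B\supseteq\eta_2 B\supseteq\cdots$; refining each step by taking the right ideals generated by partial sums over the first $i$ cosets $(0\le i\le [H_{n+1}:H_n])$ and then iterating this refinement through the whole tower produces a chain of right ideals indexed by a dense set of ``tower fractions'' (mirroring the dense chain of ideals $(y^v)$, $v\in\mathbb{Z}[1/p]\cap[0,1]$, already visible in the group algebra of a Pr\"ufer group in positive characteristic, where the uniform dimension stays finite and only a dense chain obstructs Krull dimension). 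The self-similarity of the infinite tower --- every tail $H_n\subsetneq H_{n+1}\subsetneq\cdots$ again has union $H$ --- is what allows this interpolation to be continued indefinitely, yielding a chain order-isomorphic to a densely ordered set. Transporting it via $I\mapsto IA$ produces the desired dense chain in $\mathcal{L}(A_A)$, so $A$ has no Krull dimension, a contradiction.

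I expect the main obstacle to be exactly this construction, and in particular the bookkeeping forced by the \emph{partial} idempotents. In the global case the coset expansion $\eta_{n+1}=\sum_j \eta_n(1_{g_j}\delta_{g_j})$ is clean, but in the partial setting each product $(1_h\delta_h)(1_g\delta_g)=1_h1_{hg}\delta_{hg}$ introduces extra idempotent factors $1_{hg}$, so one must check that the interpolating right ideals are genuinely nested, that the inclusions are strict, and that the factors do not collapse. One way to control this is to verify strictness through the projections of $A$ onto $B$ and onto the individual components $D_g\delta_g$ supplied by Lemma \ref{lema 1B}, reducing strictness to the nonvanishing of the relevant idempotents $1_g$ along the cosets. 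An alternative, possibly smoother route is to argue purely with deviation: assuming $Kdim(B)=\kappa$ exists, use the self-similarity of the tower to build a single descending chain in which infinitely many consecutive factors have Krull dimension equal to $\kappa$, which directly contradicts the defining property of $\kappa$ as the deviation of $\mathcal{L}(B_B)$.
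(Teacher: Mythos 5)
Your first half --- the reduction to $B=R*_{\alpha_H}H$ with $H=\bigcup_n H_n$ --- is correct and is in fact the paper's own device: Proposition \ref{proposicao 4C} combines Lemma \ref{lema 1B} with Proposition \ref{proposicao 2.1 park1} to make $I\mapsto I(R*_{\alpha}G)$ an order-embedding of right-ideal lattices, so the existence of $Kdim(R*_{\alpha}G)$ forces the existence of $Kdim(R*_{\alpha_H}H)$, and it suffices to show the latter fails. Since the paper's entire proof of Proposition \ref{park prop 1} is a one-line appeal to ``the same methods'' as Park's Proposition 1 (which concerns \emph{global} skew group rings), yours is the only place where the partial-action difficulties are even named.

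Those difficulties are, however, fatal as the argument stands, and you defer rather than resolve them. Your chain is built from $\eta_n=\sum_{h\in H_n}1_h\delta_h$, and both the nesting and the strictness of the interpolating right ideals hinge on the idempotents being nonzero along cosets: computing in $B$ gives $(1_h\delta_h)(1_t\delta_t)=1_h1_{ht}\delta_{ht}$, so even $\eta_{n+1}\in\eta_nB$ is unclear, and if the relevant $1_{ht}$ vanish the chain collapses to a point. The section's standing hypotheses do allow $D_g=0$ (the paper's own examples following Theorems \ref{teorema 6B} and \ref{teorema 17} use exactly such actions), and then the statement itself is false: take $R=K$ a field, $G=\bigoplus_{n\geq 1}\mathbb{Z}/2\mathbb{Z}$ or the Pr\"ufer group, $D_e=R$ and $D_g=0$ for $g\neq e$; this is a legitimate partial action, $R*_{\alpha}G=K$ has Krull dimension $0$, yet $G$ has a strictly ascending chain of finite subgroups. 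So no amount of bookkeeping with the coset expansions can close the proof: one must first add a nondegeneracy hypothesis (say $1_g\neq 0$ for all $g$, or retreat to the global case that Park actually treats), and even granting that, the construction of the densely ordered chain --- which you acknowledge is ``the heart of the argument'' and is the entire content of the proposition --- is sketched but never carried out.
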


\begin{proof}  It follows from the same methods of ( \cite{park2}, Proposition 1) \end{proof}

\begin{proposition} Let $\alpha$ be a partial of a group $G$ on $R$. If $R*_{\alpha}G$ has Krull dimension, then $G$ satisfies ACC on normal subgroups\end{proposition}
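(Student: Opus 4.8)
The plan is to argue by contrapositive, exploiting the standard obstructions to the existence of Krull dimension recorded in \cite{mcconnel robson}: a module with Krull dimension has finite uniform dimension, so it contains no infinite direct sum of nonzero submodules, and, more fundamentally, its lattice of submodules has deviation, so no subquotient contains a densely ordered chain of submodules; both properties are inherited by every subquotient. Accordingly, I would assume that $G$ admits a strictly ascending chain of normal subgroups $N_1\subsetneq N_2\subsetneq\cdots$ and aim to produce, inside the lattice of right ideals of $R*_{\alpha}G$, a configuration forbidden by these facts, contradicting the hypothesis that $R*_{\alpha}G$ has Krull dimension.

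First I would set up the normal-subgroup/ideal correspondence in the spirit of (\cite{park2}, Lemma 6). For each $N_i$ the restricted partial crossed product $R*_{\alpha_{N_i}}N_i$ is a subring of $R*_{\alpha}G$ and, by Lemma \ref{lema 1B}, both a left and a right $(R*_{\alpha_{N_i}}N_i)$-direct summand, with complement $A_{N_i}=\{x:supp(x)\subseteq G-N_i\}$. To each normal $N_i$ I would attach the ideal $J_i$ of $R*_{\alpha}G$ playing the role of the augmentation ideal: in the global crossed product this is $\ker(R*_{\alpha}G\to R*(G/N_i))$, and normality of $N_i$ guarantees $J_1\subsetneq J_2\subsetneq\cdots$. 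Since for a genuinely partial $\alpha$ there is in general no quotient action of $G/N_i$ (and not even an augmentation $R*_{\alpha}G\to R$ once $\alpha$ is nontrivial), $J_i$ must instead be defined intrinsically from the central idempotents $1_g$ and the support decomposition supplied by Lemma \ref{lema 1B}.

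The decisive step is to convert the ascending chain $J_1\subsetneq J_2\subsetneq\cdots$ into a configuration that no module with Krull dimension can contain. A bare ascending chain of ideals is by itself harmless for Krull dimension, since it is compatible with totally ordered submodule lattices; the point is to use the internal structure of each gap. Exploiting the nontrivial quotient $N_{i+1}/N_i$ I would interpolate inside each $J_{i+1}/J_i$, refining the chain into a densely ordered chain of right ideals of a suitable subquotient of $R*_{\alpha}G$ (or, when the gaps provide them, into an infinite independent family of nonzero right ideals). Either conclusion contradicts the existence of Krull dimension via \cite{mcconnel robson}, completing the proof.

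I expect this last step to be the main obstacle, for two reasons. First, it is genuinely different from the finite-subgroup case of Proposition \ref{park prop 1}: there the finiteness of the subgroups is what lets one produce an infinite direct sum of nonzero right ideals, so that finite uniform dimension already yields the contradiction, whereas here the $N_i$ may be infinite and one is forced onto the subtler deviation argument. Second, the partial structure strips away the tools available in the global crossed product, so both the ideals $J_i$ and the interpolation inside the gaps must be constructed by hand from the idempotents $1_g$ and the support filtration of Lemma \ref{lema 1B}, and one must verify at each stage that Krull dimension descends to the subquotients that actually carry the forbidden configuration.
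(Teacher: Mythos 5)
The paper itself gives no argument here beyond the one line ``using similar methods of (\cite{park2}, Lemma 6)'', so the question is whether you have actually adapted Park's argument to the partial setting. You have not: what you have written is a plan whose decisive steps are flagged by you as open. Concretely, two things are missing, and each is the whole proof. First, the ideals $J_i$ attached to the normal subgroups $N_i$ are never defined. You correctly observe that in a genuinely partial crossed product there is no quotient action of $G/N_i$ and no augmentation homomorphism $R*_{\alpha}G\to R$, and you then say the $J_i$ ``must instead be defined intrinsically from the central idempotents $1_g$ and the support decomposition'' --- but no such definition is given, and no strictness $J_i\subsetneq J_{i+1}$ is verified (strictness is exactly where normality and the partial idempotents $1_n$ have to be used, e.g. via elements of the form $1_n\delta_n-1_n\delta_e$ and the identification of the quotient). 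Second, and more seriously, even granting the strict chain $J_1\subsetneq J_2\subsetneq\cdots$, you acknowledge yourself that an ascending chain of (two-sided) ideals is compatible with the existence of Krull dimension; the contradiction must come from interpolating a densely ordered chain or an infinite independent family inside the gaps $J_{i+1}/J_i$. That interpolation is asserted, not performed, and it is not clear it can be performed in the generality claimed: the quotients $N_{i+1}/N_i$ may be, say, simple of prime order, offering no evident handle for dense refinement, and unlike Proposition \ref{park prop 1} there is no finiteness to convert into an infinite direct sum. So the argument as written never reaches a contradiction.

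In short, the proposal identifies the correct obstructions (finite uniform dimension, nonexistence of densely ordered chains in a lattice with deviation) and correctly diagnoses the difficulties peculiar to partial actions, but the construction that would realize one of those obstructions from a strictly ascending chain of normal subgroups is absent. To complete the proof you would need to (i) define the two-sided ideal $J_N$ of $R*_{\alpha}G$ associated to a normal subgroup $N$ using the idempotents $1_n$, prove $J_N\subsetneq J_{N'}$ for $N\subsetneq N'$, and (ii) reproduce the actual mechanism of (\cite{park2}, Lemma 6) --- not merely its statement --- checking at each stage that the partial idempotents do not collapse the configuration it builds. As it stands there is a genuine gap, one you have located accurately but not closed.
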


\begin{proof} Using similar methods of  (\cite{park2}, Lemma 6) we have the result.\end{proof}

Now, we get a comparison of the Krull dimension between the $R*_{\alpha}G$ and $R$.

\begin{proposition}\label{teorema 1C} 
Let $\alpha$  a  partial action of a finite group $G$ on a ring
$R$. If $R$ has right Krull dimension, then $R*_{\alpha}G$ has
right Krull dimension and $Kdim(R*_{\alpha}G)\leq Kdim(R)$.
\end{proposition}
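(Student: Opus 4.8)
The plan is to exploit the fact that $R*_{\alpha}G$ is a finitely generated $R$-module when $G$ is finite, and then transfer the Krull dimension bound through this module-finite structure. First I would recall that since $G$ is finite and each $D_g$ is a direct summand of $R$ (being generated by a central idempotent $1_g$), we have $R*_{\alpha}G = \bigoplus_{g\in G} D_g\delta_g$ as a right $R$-module, and each $D_g\delta_g$ is isomorphic to $D_g$ as a right $R$-module via $d\delta_g \mapsto \alpha_g^{-1}(d)$ or a similar identification. Consequently $R*_{\alpha}G$ is a finitely generated right $R$-module, being a finite direct sum of submodules of $R_R$.

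The key step is the following general principle about Krull dimension: if $M$ is a finitely generated right $R$-module and $R$ has right Krull dimension, then $M$ has Krull dimension with $Kdim(M_R) \leq Kdim(R)$. This is standard (see \cite{mcconnel robson}): a submodule of $R^{(n)}$ has Krull dimension at most $Kdim(R)$, and each $D_g$ embeds in $R_R$ so $Kdim((D_g)_R)\leq Kdim(R)$, whence $Kdim((R*_{\alpha}G)_R) = \max_{g}Kdim((D_g)_R) \leq Kdim(R)$. So as a \emph{right $R$-module}, $R*_{\alpha}G$ already has Krull dimension bounded by $Kdim(R)$.

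The crucial reduction is then to pass from the right $R$-module structure to the right $R*_{\alpha}G$-module structure on $R*_{\alpha}G$ itself. The point is that every right ideal (i.e., $R*_{\alpha}G$-submodule) of $R*_{\alpha}G$ is in particular a right $R$-submodule, so the lattice $\mathcal{L}_{R*_{\alpha}G}(R*_{\alpha}G)$ of $R*_{\alpha}G$-submodules is a sublattice of $\mathcal{L}_{R}(R*_{\alpha}G)$ of $R$-submodules. Since deviation does not increase when passing to a sublattice, we obtain $Kdim((R*_{\alpha}G)_{R*_{\alpha}G}) \leq Kdim((R*_{\alpha}G)_R) \leq Kdim(R)$, which is exactly the desired conclusion. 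I would carry out the steps in this order: establish the finite direct sum decomposition and module-finiteness, invoke the finitely-generated-module bound to control the right $R$-module Krull dimension, and finally use the sublattice comparison to descend to the ring's own Krull dimension.

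The main obstacle I anticipate is making the module isomorphisms $D_g\delta_g \cong D_g$ (as right $R$-modules) fully rigorous in the \emph{twisted partial} setting, where the multiplication rule $(a_g\delta_g)(b\delta_1) = \alpha_g(\alpha_g^{-1}(a_g)b)w_{g,1}\delta_g$ involves the cocycle $w_{g,1}=1_g$ and the twist could in principle interfere with the right action of $R$. One must check that the right $R$-action on $D_g\delta_g$ genuinely coincides, up to the isomorphism $\alpha_g^{-1}$, with multiplication inside the ideal $D_{g^{-1}}$, so that $(D_g\delta_g)_R$ is an isomorphic copy of a right ideal of $R$ and hence has Krull dimension at most $Kdim(R)$. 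Once this identification is verified, the finite-group hypothesis makes the direct sum finite and the remaining lattice-theoretic comparison is routine. I would therefore spend most of the care on the module-structure computation and treat the Krull dimension bookkeeping as an appeal to the standard properties in \cite{mcconnel robson}.
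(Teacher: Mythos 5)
Your proposal is correct and follows essentially the same route as the paper: decompose $R*_{\alpha}G=\bigoplus_{g\in G}D_g\delta_g$ as a right $R$-module, identify each summand with an ideal of $R$ to bound $Kdim((R*_{\alpha}G)_R)$ by $Kdim(R)$ via the finite-sup property, and then compare the lattice of right ideals of $R*_{\alpha}G$ with the lattice of right $R$-submodules through the (strictly increasing) inclusion to conclude $Kdim(R*_{\alpha}G)\leq Kdim((R*_{\alpha}G)_R)\leq Kdim(R)$. Your extra care about the twist is harmless but unnecessary here, since this section of the paper works with untwisted partial actions.
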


\begin{proof}
If $R$ has right Krull dimension, then each ideal $D_g$ has Krull
dimension as a right $R$-module and this implies that each
$\delta_gD_{g}$ has Krull dimension as a right $R$-module. Thus,
$Kdim(D_{g})_R=Kdim(\delta_gD_{g})_R$. Hence, by
(\cite{mcconnel robson}, Lemma 6.1.14),
$R*_{\alpha}G=\bigoplus_{g\in G}\delta_gD_{g}$ has Krull dimension
as a right $R$-module and
\begin{eqnarray*}
Kdim\,(R*_{\alpha}G)_R&=&sup\, \{Kdim \,((\delta_gD_{g})_R)\, | \, g\in G\}\\
&=&sup\, \{Kdim ((D_{g})_R)\, | \, g\in G\}\\
&=&sup\, \{Kdim(R), \,Kdim ((D_{g})_R)\, | \, g\in G-\{1_G\}\}\\
&=&Kdim(R), \,\ \emph{because of} \,\ Kdim ((D_{g})_R)\leq
Kdim(R).
\end{eqnarray*}

Note  that the map $\varphi:
\mathcal{L}_{R*_{\alpha}G}(R*_{\alpha}G) \longrightarrow
\mathcal{L}_{R}(R*_{\alpha}G)$ defined by $\varphi(N)=N$ is
strictly increasing and then, by (\cite{mcconnel robson},
Proposition 6.1.17), $R*_{\alpha}G$ has Krull dimension and
$Kdim(R*_{\alpha}G)\leq Kdim(R*_{\alpha}G)_R$.

So, $Kdim(R*_{\alpha}G)\leq
Kdim\,((R*_{\alpha}G)_R)=Kdim(R)$.
\end{proof}

Using similar methods of  (\cite{park2}, Theorem 2.4) we get the following.

\begin{corollary} 
	Let $\alpha$ be a  partial action of an abelian  group $G$ on $R$. If $R*_{\alpha}G$ has Krull dimension then $G$ is finitely generated. 
\end{corollary}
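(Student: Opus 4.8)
The plan is to translate the hypothesis into an ascending chain condition on subgroups of $G$ and then invoke an elementary group-theoretic argument. First I would observe that, since $G$ is abelian, every subgroup of $G$ is normal. Consequently the conclusion of the proposition immediately preceding this corollary---that $R*_{\alpha}G$ having Krull dimension forces $G$ to satisfy the ascending chain condition on normal subgroups---sharpens, in this abelian setting, to the statement that $G$ satisfies the ascending chain condition on \emph{all} of its subgroups. This reduction is the only place where the hypothesis that $G$ is abelian is used.

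With this in hand, the second step is purely group-theoretic. I would argue by contradiction: suppose $G$ is not finitely generated. Then no finitely generated subgroup of $G$ equals $G$, so I can choose inductively elements $g_1, g_2, \ldots$ with $g_{n+1}\notin \langle g_1,\ldots,g_n\rangle$ for each $n$. This produces a strictly ascending chain
$$\langle g_1\rangle \subsetneq \langle g_1,g_2\rangle \subsetneq \langle g_1,g_2,g_3\rangle\subsetneq\cdots$$
of subgroups of $G$, which are all normal since $G$ is abelian, contradicting the ascending chain condition established in the first step. Hence $G$ must be finitely generated. (Equivalently, one may simply cite the classical fact that an abelian group is Noetherian as a $\mathbb{Z}$-module if and only if it is finitely generated.)

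I do not anticipate a serious obstacle: the substantive content is carried entirely by the preceding proposition, which supplies the ascending chain condition on normal subgroups, so the present corollary is essentially a formal consequence. The only points requiring care are the reduction from normal subgroups to arbitrary subgroups---valid precisely because $G$ is abelian---and the verification that the inductive construction in the second step produces strict inclusions, which is immediate from the choice $g_{n+1}\notin\langle g_1,\ldots,g_n\rangle$.
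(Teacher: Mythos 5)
Your argument is correct and is essentially the route the paper intends: the paper gives no explicit proof, merely citing the methods of Park's Theorem 2.4, and those methods amount to exactly your reduction --- in an abelian group every subgroup is normal, so the immediately preceding proposition yields ACC on all subgroups, and a non-finitely-generated group would produce a strictly ascending chain $\langle g_1\rangle \subsetneq \langle g_1,g_2\rangle \subsetneq \cdots$. No gaps; your write-up in fact supplies the details the paper omits.
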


We recall that a ring $S$ is a \emph{right Goldie ring}, if $S$
has $ACC$ on right annihilator ideals and $S$ has finite right
uniform dimension. Also by \linebreak (\cite{mcconnel robson}, Proposition
6.3.5), any semiprime ring with right Krull dimension is a right
Goldie ring.

\begin{corollary}\label{corolario 2C}
Let $\alpha$ be a partial action of any group $G$ on a
semiprime ring $R$ . If $R$ has right Krull
dimension then $R*_{\alpha}G$ is a right Goldie ring.
\end{corollary}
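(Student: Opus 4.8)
The plan is to reduce the statement to the criterion recalled immediately above, namely that a semiprime ring possessing right Krull dimension is right Goldie (\cite{mcconnel robson}, Proposition 6.3.5). Writing $S=R*_{\alpha}G$, it therefore suffices to verify two separate things: that $S$ is semiprime, and that $S$ has right Krull dimension. Once both are in hand, Proposition 6.3.5 applies verbatim to $S$ and yields that $S$ is a right Goldie ring, with no further computation. So the whole argument splits cleanly into a ``$S$ is semiprime'' part and a ``$S$ has Krull dimension'' part, and the hypotheses on $R$ feed these two parts respectively: semiprimeness of $R$ into the first, right Krull dimension of $R$ into the second.

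For semiprimeness I would extend $\alpha$ to the Martindale ring of quotients $Q$ of $R$, exactly as recalled before Proposition \ref{saradia 4.10}: since $R$ is semiprime one obtains a twisted partial action $\alpha^{*}$ on $Q$ with each $D_g^{*}$ generated by the same central idempotent $1_g$. Decomposing $\alpha$ into its inner and $X$-outer parts over $Q$ and invoking the semiprimeness machinery of \cite{SDF} and \cite{lmsw} (of which Proposition \ref{saradia 4.10} is the semiprimitive prototype), one confines the possible nilpotent ideals of $S$ to ideals controlled by $G$-invariant ideals of $R$; as $R$ is semiprime these must vanish, so $S$ is semiprime. I expect this part to be essentially bookkeeping with the quotient-ring machinery, and I note that it already holds in the basic test cases, e.g. whenever $S$ is a domain.

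The genuine difficulty is the second part: showing that $S$ has right Krull dimension. When $G$ is finite this is immediate, since Proposition \ref{teorema 1C} gives that $R*_{\alpha}G$ has right Krull dimension (indeed $Kdim(R*_{\alpha}G)\le Kdim(R)$) as soon as $R$ does, and then Proposition 6.3.5 closes the case at once. For infinite $G$, however, $S=\bigoplus_{g\in G}\delta_g D_g$ is an infinite direct sum as a right $R$-module, and the deviation of its lattice of submodules need not even exist; controlling it requires strong finiteness on $G$. This is exactly the phenomenon reflected in the necessary conditions proved earlier in this section (the $ACC$ on finite and on normal subgroups, and the finite-generation conclusion in the abelian case) and in the polycyclic-by-finite hypothesis imposed in \cite{CCF} for the noetherian analogue. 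Hence the main obstacle is precisely to establish right Krull dimension of the crossed product for general $G$: this is where the real content sits, and it is the step that effectively forces one to assume that $R*_{\alpha}G$ itself has right Krull dimension (automatic for finite $G$ via Proposition \ref{teorema 1C}). Granting that, Proposition 6.3.5 finishes the proof.
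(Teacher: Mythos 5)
Your overall reduction is the same as the paper's: show $S=R*_{\alpha}G$ is semiprime and has right Krull dimension, then apply (\cite{mcconnel robson}, Proposition 6.3.5). The paper's proof is exactly three citations long: $R$ semiprime with right Krull dimension is semiprime right Goldie; (\cite{CM}, Lemma 3.6) then gives that $R*_{\alpha}G$ is semiprime; Proposition \ref{teorema 1C} gives that $R*_{\alpha}G$ has right Krull dimension; conclude. Your execution of both halves, however, has genuine problems.

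For semiprimeness, your Martindale-quotient/$X$-outer sketch is not a proof, and the principle it leans on is false. No $X$-outerness is assumed in the corollary, and the inner part of the action is precisely where nilpotent ideals can arise: already for the trivial (global) action of $\mathbb{Z}/2\mathbb{Z}$ on the field $\mathbb{F}_2$ --- a semiprime right Goldie ring of Krull dimension $0$ --- the crossed product is $\mathbb{F}_2[\mathbb{Z}/2\mathbb{Z}]$, which contains the nonzero nilpotent ideal generated by $1+\delta_g$. So ``the nilpotent ideals of $S$ are controlled by $G$-invariant ideals of $R$, hence vanish'' cannot be salvaged by bookkeeping; any correct argument must use more than semiprimeness (the paper simply invokes (\cite{CM}, Lemma 3.6) as a black box rather than rerunning Proposition \ref{saradia 4.10}, whose $X$-outer hypothesis is unavailable here). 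For the Krull dimension half, you correctly observe that Proposition \ref{teorema 1C} only covers finite $G$ --- and indeed the paper's own proof cites it at this very point despite the corollary allowing arbitrary $G$ --- but you then ``assume that $R*_{\alpha}G$ itself has right Krull dimension,'' which is the statement to be proved. Identifying the gap is not closing it: as written, your argument establishes the corollary only for finite $G$, not for ``any group $G$'' as stated.
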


\begin{proof}
If $R$ is a semiprime ring with Krull dimension, then $R$ is a
semiprime right Goldie ring. Hence, by (\cite{CM}, Lemma 3.6), $R*_{\alpha}G$ is semiprime and by Proposition \ref{teorema
1C}, $R*_{\alpha}G$ has Krull dimension. So, $R*_{\alpha}G$
is a Goldie ring.
\end{proof}

Now, using Krull dimension, we study the artinianity between $R$ and $R*_{\alpha,w}G$.

\begin{corollary} \label{corolario 3C}
Let $\alpha$ be  a  partial action of a group $G$ on a ring $R$ where only a finite number of the ideals $D_g$ is not zero.
Then,  $R*_{\alpha}G$ is right
artinian if and only if $R$ is right artinian and $G$ is finite.
\end{corollary}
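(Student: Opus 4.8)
The plan is to read the statement off the Krull-dimension comparison of Proposition~\ref{teorema 1C}, using the observation recorded at the start of this section that a \emph{nonzero} right module is right artinian exactly when its Krull dimension equals $0$.

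For the implication ``$R$ right artinian and $G$ finite $\Rightarrow R*_\alpha G$ right artinian'' I would start from $Kdim(R)=0$. Since $G$ is finite, Proposition~\ref{teorema 1C} applies and gives that $R*_\alpha G$ has right Krull dimension with $Kdim(R*_\alpha G)\le Kdim(R)=0$; because $R*_\alpha G\neq 0$ this forces $Kdim(R*_\alpha G)=0$, i.e.\ $R*_\alpha G$ is right artinian. One can also avoid Krull dimension altogether: under the finite-support hypothesis $R*_\alpha G=\bigoplus_{g\in G}\delta_gD_g$ is a \emph{finite} direct sum of right $R$-modules, each summand $\delta_gD_g$ being $R$-isomorphic to the artinian module $D_g$, so $R*_\alpha G$ is artinian as a right $R$-module; since every right ideal of $R*_\alpha G$ is in particular a right $R$-submodule, the descending chain condition on right ideals follows.

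For the converse, that $R$ is right artinian is immediate from Corollary~\ref{corolario 4B}, which only uses that $R$ is a right $R$-direct summand of $R*_\alpha G$.

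The main obstacle is the remaining assertion that $R*_\alpha G$ right artinian forces $G$ to be finite, and here the finite-support hypothesis alone is not enough. Indeed, with $R=K$ a field, $G=\mathbb{Z}$, $D_0=R$ and $D_s=0$ for all $s\neq 0$ (the partial action of the example following Theorem~\ref{teorema 17}), one has $R*_\alpha G\cong R$, which is right artinian although $G$ is infinite and only $D_0$ is nonzero. To recover ``$G$ finite'' one must assume that $\alpha$ is of finite type: if $\{g_1,\dots,g_n\}$ witnesses finite type and $D_{h_1},\dots,D_{h_k}$ are the only nonzero ideals, then for every $g\in G$ the equality $\sum_{i=1}^n D_{gg_i}=R\neq 0$ forces $gg_i=h_j$ for some $i,j$, whence $g\in\{h_jg_i^{-1}: 1\le i\le n,\ 1\le j\le k\}$ and $G$ is finite. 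Under that extra hypothesis the conclusion reduces to Theorem~\ref{teorema 6B}; accordingly I would either add ``$\alpha$ of finite type'' to the statement or drop ``and $G$ is finite'' altogether, matching the noetherian analogue of Theorem~\ref{teorema 7B}.
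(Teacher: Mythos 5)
Your handling of the provable parts coincides with the paper's route: the paper derives this corollary from Proposition \ref{teorema 1C} (the ``if'' direction, via the fact that nonzero right artinian modules are exactly those of right Krull dimension $0$) together with Theorem \ref{teorema 6B} and Corollary \ref{corolario 4B} (the ``only if'' direction). Your alternative, Krull-dimension-free argument for the ``if'' direction --- under the finite-support hypothesis $R*_{\alpha}G=\bigoplus_{g}\delta_gD_g$ is a finite direct sum of artinian right $R$-modules, hence artinian as a right $R$-module and a fortiori as a ring --- is also valid, and is exactly the device the paper itself uses for the noetherian analogue in Theorem \ref{teorema 7B}.

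More importantly, you have correctly spotted that the remaining claim, that $R*_{\alpha}G$ right artinian forces $G$ finite, does not follow from the stated hypotheses, and the paper's one-line proof does not close this gap: it appeals to Theorem \ref{teorema 6B}, whose hypothesis ``$\alpha$ is of finite type'' is not assumed in the corollary and is not implied by ``only finitely many $D_g$ are nonzero.'' The paper's own example following Theorem \ref{teorema 6B} (with $R=Ke_1\oplus Ke_2$, $G=\mathbb{Z}$, only $D_0$, $D_1$, $D_{-1}$ nonzero, and $R*_{\alpha}G$ finite dimensional, hence right artinian) is a direct counterexample to the ``only if'' direction as written, as is the simpler example you cite. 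Your two proposed repairs are both sound: adding ``$\alpha$ of finite type'' (which, combined with finitely many nonzero $D_g$ and $R\neq 0$, forces $G$ to be finite by the argument you give, reducing everything to Theorem \ref{teorema 6B}), or deleting ``and $G$ is finite'' to obtain the exact artinian analogue of Theorem \ref{teorema 7B}. So the defect lies in the paper's statement and its citation of Theorem \ref{teorema 6B}, not in your argument.
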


\begin{proof}
It is a direct consequence of Theorem \ref{teorema 6B} and
Proposition \ref{teorema 1C}, since  right artinian
rings are precisely the  rings with right Krull dimension $0$.
\end{proof}

\begin{proposition}\label{proposicao 4C}
Let $\alpha$ be a  partial action of a group $G$ on a ring $R$ and $H$ a subgroup of $G$. If $R*_{\alpha}G$ has right Krull dimension,
then $R*_{\alpha_H}H$ has right Krull dimension and
$Kdim(R*_{\alpha}G)\leq Kdim(R*_{\alpha_H}H)$. In particular, if
$R*_{\alpha}G$ has right Krull dimension, then $R$ has right Krull
dimension and $Kdim(R*_{\alpha}G)\geq Kdim(R)$.
\end{proposition}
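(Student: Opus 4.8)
The plan is to transfer the Krull dimension from $S:=R*_{\alpha}G$ down to its subring $B:=R*_{\alpha_H}H$ by producing a strictly increasing map of submodule lattices and then invoking (\cite{mcconnel robson}, Proposition 6.1.17), in the same spirit as the proof of Proposition \ref{teorema 1C}. Note that $B$ is a subring of $S$ with the same identity $1_R$. First I would invoke Lemma \ref{lema 1B} to record that $B$ is a left $B$-direct summand of $S$ (with complement $A=\{x\in S\,|\,supp(x)\subseteq G-H\}$). Since $B$ is a left $B$-direct summand of $S$, Proposition \ref{proposicao 2.1 park1} then provides, for every right ideal $I$ of $B$, the contraction identity $IS\cap B=I$.

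The heart of the argument is the map $\psi\colon\mathcal{L}_{B}(B)\to\mathcal{L}_{S}(S)$, $\psi(I)=IS$, from the lattice of right ideals of $B$ to the lattice of right ideals of $S$; I claim it is strictly increasing. It is visibly order preserving, and for strictness suppose $I\subsetneq I'$ are right ideals of $B$. Then $IS\subseteq I'S$, and if these were equal we would get $I=IS\cap B=I'S\cap B=I'$ by the identity above, a contradiction; hence $IS\subsetneq I'S$. Since by hypothesis $S_{S}=(R*_{\alpha}G)_{R*_{\alpha}G}$ has right Krull dimension, applying (\cite{mcconnel robson}, Proposition 6.1.17) to $\psi$ shows that $\mathcal{L}_{B}(B)$ has deviation, that is, $R*_{\alpha_H}H$ has right Krull dimension, together with the bound $Kdim(R*_{\alpha_H}H)\le Kdim(R*_{\alpha}G)$.

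For the ``in particular'' assertion I would specialize to the trivial subgroup $H=\{1_G\}$, for which $R*_{\alpha_H}H=R$; the paragraph above then yields that $R$ has right Krull dimension and $Kdim(R)\le Kdim(R*_{\alpha}G)$, i.e. $Kdim(R*_{\alpha}G)\ge Kdim(R)$. I expect the only real subtlety to be orienting everything correctly so that Proposition 6.1.17 is applied in the usable direction: because the lattice assumed to have deviation is that of $S$ (the target of $\psi$) and not that of $B$, the conclusion bounds the Krull dimension of the subring $R*_{\alpha_H}H$ from above by that of $R*_{\alpha}G$, matching the stated inequality $Kdim(R*_{\alpha}G)\ge Kdim(R)$ in the special case. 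The verification that $\psi$ is strictly increasing, which rests entirely on the identity $IS\cap B=I$ furnished by Proposition \ref{proposicao 2.1 park1}, is the main technical point; the remainder is a direct application of the deviation machinery of \cite{mcconnel robson}.
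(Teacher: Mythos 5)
Your proof is correct and follows essentially the same route as the paper: both use the contraction identity $I(R*_{\alpha}G)\cap(R*_{\alpha_H}H)=I$ from Proposition \ref{proposicao 2.1 park1} to show that $I\mapsto I(R*_{\alpha}G)$ is a strictly increasing map of right-ideal lattices, then apply (\cite{mcconnel robson}, Proposition 6.1.17) and specialize to $H=\{1_G\}$. You correctly obtain $Kdim(R*_{\alpha_H}H)\le Kdim(R*_{\alpha}G)$, which is what the paper's own proof establishes and what the ``in particular'' clause requires (the opposite-direction inequality displayed in the statement's first sentence is evidently a typo).
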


\begin{proof}
By Proposition \ref{proposicao 2.1 park1} it is easy to see that
the map
\begin{center} $\varphi:
\mathcal{L}_{R*_{\alpha_H}H}(R*_{\alpha_H}H) \longrightarrow
\mathcal{L}_{R*_{\alpha}G}(R*_{\alpha}G)$
\end{center}
defined by $\varphi(I)=I(R*_{\alpha}G)$ is strictly increasing.
So,
 by (\cite{mcconnel robson}, Proposition 6.1.17) we have that $R*_{\alpha_H}H$
has Krull dimension and  
\begin{center} 
	$Kdim(R*_{\alpha_H}H)\leq
Kdim(R*_{\alpha}G)$.
\end{center}
\end{proof}

An immediate consequence of Propositions \ref{teorema 1C} and
\ref{proposicao 4C} we have the  following result.

\begin{corollary}\label{corolario 4C}
Let $\alpha$ be a partial action of finite group $G$ on a ring
$R$. Then $R$ has right Krull dimension if and only if
$R*_{\alpha}G$ has right Krull dimension. In this case, $Kdim(R)=
Kdim(R*_{\alpha}G)$.
\end{corollary}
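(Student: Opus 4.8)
The plan is to deduce the corollary directly by combining the two inequalities already established in Propositions~\ref{teorema 1C} and~\ref{proposicao 4C}, observing that each of the two implications in the biconditional, together with a matching inequality on the Krull dimensions, is exactly one of these propositions. Since $G$ is assumed finite, both propositions apply without further hypotheses.

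First I would prove the forward implication. Suppose $R$ has right Krull dimension. Because $G$ is finite, Proposition~\ref{teorema 1C} applies and yields that $R*_{\alpha}G$ has right Krull dimension together with the inequality $Kdim(R*_{\alpha}G)\leq Kdim(R)$. This settles one direction of the equivalence and one half of the desired equality of dimensions.

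Next I would prove the converse implication. Suppose $R*_{\alpha}G$ has right Krull dimension. Applying Proposition~\ref{proposicao 4C} with the trivial subgroup $H=\{1_G\}$, for which $R*_{\alpha_H}H=R$ (as recorded in the remark following Lemma~\ref{lema 1B}), the ``in particular'' part of that proposition gives that $R$ has right Krull dimension and $Kdim(R*_{\alpha}G)\geq Kdim(R)$. This completes the equivalence.

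Finally, combining the two inequalities $Kdim(R*_{\alpha}G)\leq Kdim(R)$ and $Kdim(R*_{\alpha}G)\geq Kdim(R)$ obtained above forces $Kdim(R)=Kdim(R*_{\alpha}G)$, as claimed. I do not anticipate any genuine obstacle here: the entire content has been pushed into the two cited propositions, and the only point requiring minor care is the identification $R*_{\alpha_{\{1_G\}}}\{1_G\}=R$ so that the trivial-subgroup case of Proposition~\ref{proposicao 4C} indeed recovers the base ring $R$.
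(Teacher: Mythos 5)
Your proof is correct and is essentially the paper's own argument: the paper presents the corollary as an immediate consequence of Propositions \ref{teorema 1C} and \ref{proposicao 4C}, exactly the two inequalities you combine (using the ``in particular'' clause of Proposition \ref{proposicao 4C} for the trivial subgroup). Nothing further is needed.
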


According to \cite{park2} a group $G$ is locally finite if each finitely generated subgroup of $G$ is finite.

Next we have a partial generalization of (\cite{park2}, Corollary
2).

\begin{theorem}\label{teorema 6C}
Let $\alpha$ be a partial action of a locally finite group $G$ on
a  ring $R$. Then
$R*_{\alpha}G$ has right Krull dimension if and only if $R$ has
right Krull dimension and $G$ is finite.
\end{theorem}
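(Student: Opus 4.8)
The plan is to prove both directions separately, leveraging the locally finite hypothesis to reduce the ``only if'' direction to the finite subgroup case. First I would handle the easy converse: if $R$ has right Krull dimension and $G$ is finite, then Corollary \ref{corolario 4C} gives immediately that $R*_{\alpha}G$ has right Krull dimension (indeed with equality of dimensions). So the whole content lies in the forward direction.

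For the forward direction, assume $R*_{\alpha}G$ has right Krull dimension. The fact that $R$ then has right Krull dimension is already delivered by Proposition \ref{proposicao 4C} (taking $H=\{1_G\}$, which gives $R*_{\alpha_H}H = R$). Thus the crux is to show that $G$ is \emph{finite}. Here I would combine two facts established earlier. On the one hand, Proposition \ref{park prop 1} tells us that $G$ satisfies ACC on finite subgroups. On the other hand, since $G$ is locally finite, every finitely generated subgroup of $G$ is finite. The strategy is then to argue that these two conditions together force $G$ itself to be finite: if $G$ were infinite, I would try to build an infinite strictly ascending chain of finite subgroups, contradicting the ACC.

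The key step — and what I expect to be the main obstacle — is producing that strictly ascending chain of finite subgroups inside an infinite locally finite group. Concretely, if $G$ is infinite one picks elements $g_1, g_2, \ldots$ generating strictly larger finitely generated (hence finite, by local finiteness) subgroups $H_1 \subsetneq H_2 \subsetneq \cdots$; the subtlety is ensuring the chain is genuinely strictly increasing and never stabilizes, i.e.\ that $G$ cannot be exhausted by any single finite subgroup. Since $G$ is infinite, no finite subgroup equals $G$, so at each stage there is a new element outside the current subgroup, and adjoining it produces a strictly larger finite subgroup. This yields an infinite strictly ascending chain of finite subgroups, directly contradicting Proposition \ref{park prop 1}. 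Hence $G$ must be finite, completing the forward direction. I would remark that this is the place where local finiteness is indispensable: without it, finitely generated subgroups need not be finite, and the ACC on finite subgroups would not constrain the overall size of $G$. The whole argument then closes by citing Corollary \ref{corolario 4C} once more to record that in fact $Kdim(R) = Kdim(R*_{\alpha}G)$ in the finite case, matching the structure of (\cite{park2}, Corollary 2) that this theorem generalizes.
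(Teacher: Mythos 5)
Your proposal is correct and follows essentially the same route as the paper: the converse via Proposition \ref{teorema 1C} (equivalently Corollary \ref{corolario 4C}), $R$ having Krull dimension via Proposition \ref{proposicao 4C}, and finiteness of $G$ by constructing a strictly ascending chain of finite subgroups in an infinite locally finite group to contradict the ACC on finite subgroups from Proposition \ref{park prop 1}. No substantive differences.
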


\begin{proof}
Suppose that $R*_{\alpha}G$ has
right Krull dimension. Then by Proposition \ref{proposicao 4C},
$R$ has Krull dimension. We claim that $G$ is finite.  In fact, suppose that $G$ is infinite. Thus, there exists $f_i\in G$, $i\geq 1$ such that $f_i\notin \langle f_1,\cdots,f_{i-1}\rangle$ and $\langle f_1\rangle \subsetneqq  \langle f_1,f_2\rangle \subsetneqq \cdots$ is strictly increasing which contradicts the Proposition \ref{park prop 1}. 
The converse follows from Proposition \ref{teorema 1C}.
\end{proof}



\section{Triangular matrix representations of partial skew group rings}\

In this section, we characterize the enveloping action of partial actions of groups on triangular matrix algebras and we study triangular matrix representation of partial skew group rings. 

We begin with the following definition which partially generalizes (\cite{HG}, Definition 2.7).

\begin{definition} Let $N$ be a $(R,S)$-bimodule, $\alpha_1=\{\alpha_{R}^{g}:D_{R}^{g^{-1}}\rightarrow D_{R}^{g}: g\in G, D_{R}^{g} \triangleright R\}$, $\alpha_2=\{\alpha_{S}^g:D_{S}^{g^{-1}}\rightarrow D_S^{g}: g\in G, D_{S}^{g} \triangleright S\}$ be  partial  actions of $G$ on $R$ and $S$, respectively.  A partial action $\alpha$ of $G$ on $N$ relative to  $(\alpha_1, \alpha_2)$ if there exists a collection of $(D_{R}^{g}, D_{S}^{g})$-bi-submodules $N_g$ of $N$ and isomorphims of abelian groups $\alpha_g:N_{g^{-1}}\rightarrow N_g$ such that the following properties are satisfied:

(i) $\alpha_g(r_{g^{-1}}n_{g^{-1}})=\alpha_1^g(r_{g^{-1}})\alpha_g(n_{g^{-1}})$ and $\alpha_g(n_{g^{-1}}s_{g^{-1}})=\alpha_g(n_{g^{-1}})\alpha^{g}_2(s_{g^{-1}})$, for all $r_{g^{-1}}\in D^{g^{-1}}_R$, $s_{g^{-1}}\in D^{g^{-1}}_S$ and $n_{g^{-1}}\in N_{g^{-1}}$.

(ii) $\alpha_e=id_N$ and $N_e=N$.

(iii) $\alpha_g(N_{g^{-1}}\cap N_h)=N_g\cap N_{gh}$, for all $g,h\in G$.

(iii) $\alpha_g\circ \alpha_h(x)=\alpha_{gh}(x)$, for all $x\in \alpha_{h^{-1}}(N_h\cap N_{g^{-1}})$. \end{definition}

In the next result, we study necessary conditions for the existence of enveloping actions of partial actions  on bimodules and the proof similarly follows as in   (\cite{DE}, Theorem 4.5) and we put it here for reader's convenience.

\begin{theorem} Let $N$ be a $(R, S)$-bimodule, $\alpha$  a relative $(\alpha_1, \alpha_2)$-partial action of $G$ on $N$, where $\alpha_1$ is a partial action of $G$ on $R$ and $\alpha_2$ is a partial action of $G$ on $S$. If $(R,\alpha_1, G)$ and $(S, \alpha_2, G)$ have enveloping actions $(T_1, \gamma, G)$ and $(T_2, \theta,G)$, respectively,  such that  $1_g^Rn=n1_g^S=n$, for all $n\in N$, then there exists a $(T_1, T_2)$-bimodule $M$ with a global action $\beta$ of $G$ on $M$ such that the following properties are satisfied:

(i) $\beta_g(zm)=\gamma_g(z)\beta_g(m)$ and $\beta_g(ms)=\beta_g(m)\theta_g(s)$, for all $m\in M$, $g\in G$, $z\in T_1$ and $s\in T_2$.

(ii) $N$ is a $(T_1,T_2)$-submodule of $M$.

(iii) $\beta_g|_{N_{g^{-1}}}=\alpha_g$, for all $g\in G$.

(iv) $\beta_g(N)\cap N=N_g$, for all $g\in G$.\end{theorem}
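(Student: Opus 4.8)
The plan is to mimic the function-algebra globalization of (\cite{DE}, Theorem 4.5), now carried out on the bimodule $N$. First I would realize the two given enveloping actions concretely inside function spaces. Write $\mathcal{F}(G,R)$, $\mathcal{F}(G,S)$, $\mathcal{F}(G,N)$ for the abelian groups of all maps from $G$ into $R$, $S$, $N$ with pointwise operations, and let $\gamma$, $\theta$ and the sought $\beta$ all act by left translation $\beta_g(f)(h)=f(g^{-1}h)$. With the canonical embeddings $\varphi_R(r)(g)=\alpha_1^{g^{-1}}(r1_g^R)$ and $\varphi_S(s)(g)=\alpha_2^{g^{-1}}(s1_g^S)$, one has $T_1=\sum_{g}\gamma_g(\varphi_R(R))\subseteq\mathcal{F}(G,R)$ and $T_2=\sum_g\theta_g(\varphi_S(S))\subseteq\mathcal{F}(G,S)$. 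Make $\mathcal{F}(G,N)$ into an $(\mathcal{F}(G,R),\mathcal{F}(G,S))$-bimodule by the pointwise actions $(Ff)(g)=F(g)f(g)$ and $(fH)(g)=f(g)H(g)$; restriction then gives a $(T_1,T_2)$-action.

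Next I would embed $N$. Define $\psi:N\to\mathcal{F}(G,N)$ by $\psi(n)(g)=\alpha_{g^{-1}}(1_g^Rn)$, noting that $1_g^Rn\in N_g$ so that $\alpha_{g^{-1}}\colon N_g\to N_{g^{-1}}$ applies, and set $M=\sum_{g\in G}\beta_g(\psi(N))$. Since $\beta$ is left translation, $\beta_g\beta_h=\beta_{gh}$ holds on the nose, and regrouping the defining sum gives $\beta_g(M)=M$, so $\beta$ restricts to a genuine global action of $G$ on $M$; this already yields property (i) of the statement, because left translation commutes with the pointwise module structure in exactly the required way. Because $\psi(n)(e)=\alpha_e(1_e^Rn)=n$ (as $N_e=N$ and $\alpha_e=\mathrm{id}$), the map $\psi$ is injective and we identify $N$ with $\psi(N)$. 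The core point is that $\psi$ intertwines the module structures: $\psi(rn)=\varphi_R(r)\psi(n)$ and $\psi(ns)=\psi(n)\varphi_S(s)$. The first follows from the centrality of $1_g^R$ together with the module-compatibility axiom (i) of the relative partial action; for the second one first rewrites $1_g^Rn=n1_g^S$ using the standing hypothesis and then applies the right-hand half of that same axiom. Consequently $\psi(N)$ is a $(\varphi_R(R),\varphi_S(S))$-sub-bimodule, and propagating through $\beta_g$ shows that $M$ is a $(T_1,T_2)$-sub-bimodule of $\mathcal{F}(G,N)$ and that $\psi(N)$ is stable under the full $T_1$- and $T_2$-actions, which is property (ii).

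It then remains to check (iii) and (iv). For (iii) I would verify $\beta_g(\psi(x))=\psi(\alpha_g(x))$ for $x\in N_{g^{-1}}$ by a direct pointwise computation using the composition axiom $\alpha_g\circ\alpha_h=\alpha_{gh}$ (on the appropriate overlap) of the relative partial action. For (iv) I would compute $\beta_g(\psi(N))\cap\psi(N)$ pointwise and show it equals $\psi(N_g)$; here the decisive input is the overlap axiom $\alpha_g(N_{g^{-1}}\cap N_h)=N_g\cap N_{gh}$, which is precisely what controls how the translated copy of $N$ meets $N$ itself.

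The main obstacle I anticipate is exactly this last pair of verifications: establishing that $\psi(N)$ is an \emph{ideal-like} sub-bimodule closed under the full $T_1$- and $T_2$-actions (property (ii)), and pinning down the intersection in (iv). Both require carefully tracking the domains $N_g$, $D_R^g$, $D_S^g$ through the translations $\beta_g$ and invoking the overlap-compatibility axiom $\alpha_g(N_{g^{-1}}\cap N_h)=N_g\cap N_{gh}$. The extra delicacy relative to the one-sided ring case of (\cite{DE}, Theorem 4.5) is the simultaneous bookkeeping of the left $R$-action and the right $S$-action, which is reconciled throughout by the hypothesis $1_g^Rn=n1_g^S$.
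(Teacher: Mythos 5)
Your proposal follows essentially the same route as the paper: both globalize via the function space $F(G,N)$ made into an $(F(G,R),F(G,S))$-bimodule by pointwise actions, embed $N$ by $n\mapsto\big(g\mapsto\alpha_{g^{-1}}(1_g^Rn)\big)$ (the paper writes the translation/embedding with the opposite convention, but it is the same Dokuchaev--Exel construction), and set $M=\sum_{g\in G}\beta_g(\varphi(N))$. Your verification of (i)--(iv) is, if anything, more explicit than the paper's, which simply defers those checks to the method of (\cite{DE}, Theorem 4.5).
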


\begin{proof}  We consider $F(G,N)$, $F(G,R)$ and $F(G,S)$ be the Cartesian product of the copies of $N$, $R$ and $S$  indexed by the elements of G, respectively,  that is, the algebra of all functions of $G$ into $N$, $G$ into $R$ and $G$ into $S$.   We define the actions $(fg)(s)=f(s)g(s)$ and $(hf)(s)=h(s)f(s)$, for all $s\in G$,  $f\in F(G,N)$, $g\in F(G,S)$ and $h\in F(G,R)$. Note that  $F(G,N)$ is a $(F(G,R),F(G,S))$-bimodule. As in the proof of (\cite{DE}, Theorem 4.5)   we have global actions $\beta_1$ and $\beta_2$ of $G$ on $F(G,R)$ and 
$F(G,S)$, respectively.  We define the relative $(\beta_1, \beta_2)$-global action $\beta$ of $G$ on $F(G,N)$ by $\beta_g(f)|_{h}=f(h^{-1}g)$. It is not difficult to show that $\beta$ is an isomorphim. Next, we define the homomorphism of $(R,S)$-bimodules $\varphi:N\rightarrow F(G,N)$  by $\varphi(n)|g=\alpha_g(1^{R}_{g^{-1}}n)=\alpha_g(n1_{g^{-1}}^{S})$. We clearly have that $\varphi$ is injective. Let $M=\sum_{g\in G}\beta_g(\varphi(N))$. Then  for each $g\in G$, we have that $\beta_g(N)$ is a $(\gamma_{g}(R),\theta_{g}(S))$-bimodule with the actions $\gamma_{g}(r)\beta_g(n)=\beta_g(rn)$ and $\beta_g(n)\gamma_g(s)=\beta_g(ns)$, for all $g \in G$, $r\in R$, $s\in S$ and $n\in N$.

Since, for all $g\in G$, $\gamma_g(R)$ and $\theta_g(S)$ are ideals of $T_1$ and $T_2$, respectively,  then we easily obtain that  $M$ is a $(T_1,T_2)$-bimodule. The other requirements of the theorem follows by the same methods of (\cite{DE}, Theorem 4.5). \end{proof}

Now, we recall the well-known definition of a triangular matrix algebra associated to a $(R,S)$-bimodule $N$.

\begin{definition} Let $N$ be an $(R,S)$-bimodule. The triangular matrix algebra associated to the triple $(R,N,S)$ is the algebra $\mathcal{L}=\{(r,n,s):r\in R, n\in N, s\in S \}$ with usual sum and multiplication rule  $(r,n,s).(r_1,n_1,s_1)=(rr_1,rn_1+ns_1, ss_1)$. \end{definition} 

\begin{remark} It is convenient to point out that \begin{center}
$\mathcal{L}=\left\{\left(\begin{array}{cc}
r & n \\ 
0 & s
\end{array}\right): r\in R, n\in N, s\in S\right\}$
\end{center}
\end{remark} 

From now on we denote the triangular matrix algebra associated to the triple $(R,N,S)$ as $\mathcal{L}=(R,N,S)$. 

The next result  is probably well-known, but we could not find a proper reference and we will give a sketch of the proof.

\begin{lemma} Let $\mathcal{L}=(R,N,S)$ be a triangular matrix algebra, where $N$ is $(R,S)$-bimodule. Then for any  ideal $J$ of $\mathcal{L}$ is of the form $J=(J_1, N_2,J_3)$, where $J_1$ is an ideal of $R$,  $J_3$ is an ideal of $S$   and $N_2$ is an $(R,S)$-bimodule. Moreover, if $J$ is generated by a central idempotent, then $J_1$ and $J_3$ are generated by central idempotents. \end{lemma}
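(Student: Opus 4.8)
The plan is to analyze the structure of an arbitrary ideal $J$ of the triangular matrix algebra $\mathcal{L}=(R,N,S)$ by looking at how it interacts with the three coordinate slots. First I would set $J_1=\{r\in R : (r,n,s)\in J \text{ for some } n\in N,\, s\in S\}$ and $J_3=\{s\in S : (r,n,s)\in J \text{ for some } r\in R,\, n\in N\}$, and show these are ideals of $R$ and $S$ respectively; this is immediate from the multiplication rule, since projecting $(r,n,s).(r_1,n_1,s_1)=(rr_1,\,rn_1+ns_1,\,ss_1)$ onto the first (resp.\ third) coordinate gives $rr_1$ (resp.\ $ss_1$), so $J_1$ and $J_3$ are closed under the ambient multiplication and addition. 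The middle slot $N_2=\{n\in N : (0,n,0)\in J\}$ should be shown to be an $(R,S)$-sub-bimodule of $N$, again reading off the multiplication: multiplying $(0,n,0)$ on the left by $(r,0,0)$ gives $(0,rn,0)$ and on the right by $(0,0,s)$ gives $(0,ns,0)$.

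The heart of the argument is to show that $J$ actually equals the product set $(J_1,N_2,J_3)$, i.e.\ that $J$ decomposes coordinatewise. The key step is to use the idempotents $e_{11}=(1_R,0,0)$ and $e_{22}=(0,0,1_S)$ of $\mathcal{L}$. Given any $(r,n,s)\in J$, I would observe that $e_{11}(r,n,s)e_{11}=(r,0,0)$ and $e_{22}(r,n,s)e_{22}=(0,0,s)$ both lie in $J$ since $J$ is a two-sided ideal; consequently $(0,n,0)=(r,n,s)-(r,0,0)-(0,0,s)\in J$, which forces $r\in J_1$, $s\in J_3$, and $n\in N_2$. Conversely, every element of $(J_1,N_2,J_3)$ lies in $J$: if $r\in J_1$ then some $(r,n',s')\in J$, whence $(r,0,0)=e_{11}(r,n',s')e_{11}\in J$, and similarly $(0,0,s)\in J$ for $s\in J_3$, while $(0,n,0)\in J$ for $n\in N_2$ by definition, so their sum $(r,n,s)$ lies in $J$. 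This establishes $J=(J_1,N_2,J_3)$ with $N_2$ the required bimodule.

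For the final assertion, suppose $J$ is generated by a central idempotent $E\in\mathcal{L}$. Writing $E=(\varepsilon,\nu,\eta)$, the idempotency $E^2=E$ gives $\varepsilon^2=\varepsilon$, $\eta^2=\eta$, and $\varepsilon\nu+\nu\eta=\nu$ in the middle slot. Centrality of $E$ means $E$ commutes with every element of $\mathcal{L}$; comparing $(r,0,0)E$ and $E(r,0,0)$ shows $\varepsilon$ is central in $R$ (and forces $\nu$ to be compatible in a way I would spell out), and likewise $\eta$ is central in $S$. I would then argue that $J_1=\mathcal{L}E$ projected to the first coordinate is exactly $R\varepsilon=\varepsilon R$, the ideal generated by the central idempotent $\varepsilon$, and similarly $J_3=S\eta=\eta S$ is generated by the central idempotent $\eta$.

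The main obstacle I anticipate is the bookkeeping around the middle component $\nu$ when handling centrality of the idempotent: unlike the diagonal entries, $\nu$ need not vanish in general, and I must verify that the central idempotent conditions (in particular $r\nu=\nu s$-type relations forced by centrality, for all $r\in R$, $s\in S$) are compatible with $\varepsilon$ and $\eta$ being the genuine idempotent generators of $J_1$ and $J_3$. Since the statement only claims that $J_1$ and $J_3$ are generated by central idempotents and makes no structural demand on $N_2$, I expect the middle-slot complications to be inessential to the conclusion, but they must be checked carefully enough to confirm that the projections of $\mathcal{L}E$ to the corner rings are precisely $R\varepsilon$ and $S\eta$.
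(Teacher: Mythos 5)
Your proposal is correct and follows essentially the same route as the paper: decompose $J$ coordinatewise into a corner ideal of $R$, a sub-bimodule of $N$, and a corner ideal of $S$, and use the fact that a central idempotent of $\mathcal{L}$ must be diagonal to read off the generators of $J_1$ and $J_3$. The only difference is cosmetic and in your favor: the paper defines $J_1,N_2,J_3$ directly as the sets of corner entries and asserts $J=(J_1,N_2,J_3)$ without argument, whereas you justify this via multiplication by $e_{11}=(1_R,0,0)$ and $e_{22}=(0,0,1_S)$, and your worry about the middle entry $\nu$ of a central idempotent evaporates since $e_{11}E=EE_{11}$ forces $\nu=0$.
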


\begin{proof} We define 
\begin{center}	
$J_1=\left\{a\in R| \exists \left(\begin{array}{cc}
a & 0 \\ 
0 & 0
\end{array}\right)\in J\right\}$,
\end{center}
 $$N_2=\left\{n\in N|\exists \left(\begin{array}{cc}
0 & n \\ 
0 & 0
\end{array} \right)\in J\right\},$$ 
$$J_3=\left\{c\in S|\exists \left(\begin{array}{cc}
0 & 0 \\ 
0 & c
\end{array}\right)\in J\right\}.$$ We clearly have that $J_1$ and $J_3$ are ideals of $R$ and $S$ and $N_2$ is an $(R,S)$-bimodule.  We easily have that $J=(J_1, N_2, J_3)$. Note that each idempotent in $\mathcal{L}$ is of the form $\left(\begin{array}{cc}
e & 0 \\ 
0 & f
\end{array}\right)$, where $e$ and $f$ are idempotents of $R$ and $S$, respectively.  The result follows. \end{proof}

The following result gives a characterization of the existence of enveloping actions of partial actions  of groups on triangular matrix algebras.

\begin{theorem} Let $\mathcal{L}=(R,N,S)$ where $R$ and $S$ are rings with identity, $N$  a $(R,S)$-bimodule   and $\alpha$ a partial action of a group $G$ on $\mathcal{L}$, where $\alpha=\{\alpha_g: T_{g^{-1}}\rightarrow T_g: g\in G\}$. Then $(\mathcal{L}, \alpha)$ has enveloping action $(T,\beta)$  such that for each $g\in G$ $\alpha_g(1_{g^{-1}}^R, 0,0)=(1_g^R,0,0)$ and $\alpha_g(0,0,1_{g^{-1}}^S)=
(0,0,1_{g}^{S})$ if and only if there exists partial actions $\alpha_1$ of $G$ on $R$, $\alpha_3$ of $G$ on $S$ and a relative partial $(\alpha_1,\alpha_3)$-action $\alpha_2$ of $G$ on $N$ such that $(R,\alpha_1)$, $(N,\alpha_2)$ and $(S,\alpha_3)$ have enveloping actions. Moreover, $T$ is a triangular matrix algebra.\end{theorem}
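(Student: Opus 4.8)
The plan is to prove the two implications separately, in each case carrying the block structure of $\mathcal{L}=(R,N,S)$ through the action. Throughout I would use the preceding Lemma to write each domain ideal as $T_g=(D_R^g,N_g,D_S^g)$, where $D_R^g=1_g^R R$ and $D_S^g=1_g^S S$ are generated by central idempotents and $N_g$ is a $(D_R^g,D_S^g)$-bimodule.

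\textbf{($\Rightarrow$)} Assume $(\mathcal{L},\alpha)$ has an enveloping action and that $\alpha_g(1_{g^{-1}}^R,0,0)=(1_g^R,0,0)$ and $\alpha_g(0,0,1_{g^{-1}}^S)=(0,0,1_g^S)$. First I would use these identities to split $\alpha_g$ along the corners. Since $\alpha_g$ is a ring isomorphism fixing the corner idempotents, for $(r,0,0),(0,n,0),(0,0,s)\in T_{g^{-1}}$ one obtains $\alpha_g(r,0,0)=(1_g^R,0,0)\alpha_g(r,0,0)(1_g^R,0,0)\in(D_R^g,0,0)$, and likewise $\alpha_g(0,n,0)=(1_g^R,0,0)\alpha_g(0,n,0)(0,0,1_g^S)\in(0,N_g,0)$ and $\alpha_g(0,0,s)\in(0,0,D_S^g)$. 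This defines maps $\alpha_1^g,\alpha_2^g,\alpha_3^g$ on the $R$-, $N$- and $S$-corners, and reading off the multiplicativity identity $\alpha_g\big((r,0,0)(0,n,0)(0,0,s)\big)=\alpha_g(r,0,0)\alpha_g(0,n,0)\alpha_g(0,0,s)$ gives precisely the relative-action compatibility $\alpha_2^g(rns)=\alpha_1^g(r)\alpha_2^g(n)\alpha_3^g(s)$, i.e. the two identities in item (i) of the relative-action definition.

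Next I would verify that $\alpha_1,\alpha_3$ are partial actions of $G$ on $R,S$ and that $\alpha_2$ is a relative $(\alpha_1,\alpha_3)$-partial action on $N$; each axiom is recovered by restricting the corresponding axiom for $\alpha$ to the three corners, so this is bookkeeping rather than new computation. Finally, the hypothesis that $(\mathcal{L},\alpha)$ has an enveloping action forces, by (\cite{DE}, Theorem 4.5), each $T_g$ to be generated by a central idempotent of $\mathcal{L}$, whence the Lemma yields that $D_R^g$ and $D_S^g$ are generated by central idempotents of $R$ and $S$. Applying (\cite{DE}, Theorem 4.5) once more, both $\alpha_1$ and $\alpha_3$ admit enveloping actions, and then the previous Theorem supplies one for the relative action $\alpha_2$.

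\textbf{($\Leftarrow$)} Conversely, I would start from enveloping actions $(T_1,\gamma)$ of $(R,\alpha_1)$ and $(T_3,\theta)$ of $(S,\alpha_3)$, and feed $\alpha_2$ into the previous Theorem to get a $(T_1,T_3)$-bimodule $M$ with a global action $\beta_N$ restricting to $\alpha_2$ on $N$. Set $T=(T_1,M,T_3)$ and $\beta_g(z,m,w)=(\gamma_g(z),\beta_N^g(m),\theta_g(w))$; the compatibilities $\beta_N^g(zm)=\gamma_g(z)\beta_N^g(m)$ and $\beta_N^g(mw)=\beta_N^g(m)\theta_g(w)$ from that Theorem are exactly what make each $\beta_g$ a ring automorphism of the triangular algebra $T$, so $\beta$ is a global action and $T$ is a triangular matrix algebra. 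It then remains to check that $(T,\beta)$ envelops $(\mathcal{L},\alpha)$, which I would do coordinatewise against the conditions $(i')$--$(v')$: $\mathcal{L}=(R,N,S)$ sits in $T$ as an ideal because $R,N,S$ are ideals/submodules of $T_1,M,T_3$; the equalities $T=\sum_g\beta_g(\mathcal{L})$ and $T_g=\mathcal{L}\cap\beta_g(\mathcal{L})$ reduce to the analogous statements in each coordinate; and $\alpha_g=\beta_g|_{T_{g^{-1}}}$ together with the two idempotent identities holds by construction.

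\textbf{Main obstacle and the ``moreover''.} The delicate point is the $(\Leftarrow)$ direction: one must confirm that the $(T_1,T_3)$-bimodule structure of $M$ produced by the previous Theorem is genuinely compatible with the triangular multiplication of $T=(T_1,M,T_3)$ and restricts correctly to the original $(R,S)$-action on $N$, so that $\mathcal{L}\hookrightarrow T$ is a ring embedding with image an ideal; checking the ideal property and the decomposition $T=\sum_g\beta_g(\mathcal{L})$ simultaneously in all three coordinates is where the bulk of the care lies. For the final ``moreover'' claim I would invoke the standard uniqueness of enveloping actions up to isomorphism: the $(\Leftarrow)$ construction exhibits one enveloping action of $(\mathcal{L},\alpha)$ that is a triangular matrix algebra, so any enveloping action $T$ satisfying the idempotent conditions is isomorphic to it and hence itself triangular.
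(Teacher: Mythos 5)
Your proposal is correct and follows essentially the same route as the paper: both decompose each domain ideal as $T_g=(R_g,N_g,S_g)$ via the preceding lemma, extract corner partial actions $\alpha_1,\alpha_2,\alpha_3$, obtain their enveloping actions from (\cite{DE}, Theorem 4.5) together with the preceding theorem on relative actions, and in the converse direction assemble $T=(T_1,M,T_3)$ with the componentwise global action. The only cosmetic difference is that you verify corner-preservation of $\alpha_g$ directly from the idempotent hypotheses and the multiplicativity of $\alpha_g$, whereas the paper derives it by intersecting $\beta_g(R,0,0)$ with $(R,0,0)$ inside the globalization.
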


\begin{proof}  Suppose that $(\mathcal{L}, \alpha)$ has enveloping action $(T,\beta)$.  We only construct the partial action for the ring $R$, because the other constructions are similar. By  (\cite{DE}, Theorem 4.5) each ideal $T_g$, $g\in G$, is generated by a central idempotent  and  by Lemma 47, each ideal $T_g$ of $\mathcal{L}$ associated to the partial action $\alpha$ of $G$ is of the form $T_g=(R_g, N_g, S_g)$, where $N_g$ is a $(R,S)$-sub-bimodule of $N$ and $R_g$ and $S_g$  are generated by central idempotents  $1_g^R$ and $1_g^S$, respectively. For each $g\in G$,  we consider $A_g=\beta_g(R,0,0)$. We claim that $A_g\cap (R,0,0)=(R_g,0,0)$. In fact, let $y\in A_g\cap (R,0,0)$. Since $y=(a',b',c')$, then we obtain that $y=y(1_R,0,0)=(a',0,0)$. Thus, $y\in (R_g,0,0)$. 

On the other hand, let $(a,0,0)\in (R_g,0,0)$. Then, we have that $(a,0,0)=\beta_g(r,n,s)$. By the fact that , $\beta(1_{g^{-1}}^R,0,0)=(1_g^R,0,0)$  we have that   $(a,0,0)=(a,0,0)(1_g,0,0)=\beta_g(r,n,s)\beta_g(1_{g^{-1}},0,0)=\beta_g(r1_{g^{-1}},0,0)\in \beta_g(R,0,0)$.

 
 Note that   $\alpha_g(R_{g^{-1}},0,0)=\beta_g(R_{g^{-1}},0,0)=(R_g,0,0)$.  Thus, for each $g\in G$ we consider  the ideals $R_g$ and we define the isomorphisms $\alpha_g^R:R_ {g^{-1}}\rightarrow R_g$ by $\alpha_g^R=\pi_g\circ \alpha_g\circ i_g$, where $\pi_g:(R_g,0,0)\rightarrow R_g$ is the natural projection and $i_g:R_g\rightarrow (R_g,0,0)$ is the natural inclusion. Using the fact that $\alpha$ is a partial action of $G$ on $\mathcal{L}$ we  easily obtain  that $\alpha_1=\{\alpha_g^R:R_{g^{-1}}\rightarrow R_g\}$ is a partial action of $G$ on $R$.

Conversely,  by assumption and (\cite{DE}, Theorem 4.5) we have that $(R,\alpha_1)$ and  $(S,\alpha_3)$ have enveloping actions $(T_1,\beta_1)$ and $(T_3,\beta_3)$ and by Theorem 42 , $(N,\alpha_2)$ has enveloping action $(M,\beta_2)$. We define a global action of $G$ on $(T_1,M,T_3)$ as follows: for each $g\in G$, $\gamma_g(a,b,c)=(\beta_{1}^{g}(a),\beta_{2}^{g}(b),\beta_{3}^{g}(c))$. It is standard to show that  $Q=\sum_{g\in G} \gamma_g(R,N,S)$ with the action defined as above is an enveloping action for $(\mathcal{L},\alpha)$ and $Q$ is a triangular matrix algebra. \end{proof}
 
 \begin{lemma} Let $\theta:(R,N,S)\rightarrow (R',N',S')$ be an isomorphim of algebras  where $N$ is a $(R,S)$-bimodule,  $N'$ is a $(R',S')$-bimodule and,  $R$ and $S$  are not necessarily unital rings. The following conditions are equivalent:
 
 (a) $\theta(r,n,s)=(\theta_1(r),\theta_2(n),\theta_3(s))$, where $\theta_1:R\rightarrow R'$ and $\theta_3:S\rightarrow S'$ are isomorphims of rings and $\theta_2:N\rightarrow N'$ is a $(R,S)$-bimodule isomorphism.
 
 (b) $\theta(R,0,0)\subseteq (R',0,0)$ and $\theta(0,0,S)\subseteq (0,0,S')$.\end{lemma}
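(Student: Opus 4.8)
The plan is to prove the equivalence (a) $\Leftrightarrow$ (b) by noting that the substantive direction is (b) $\Rightarrow$ (a); the direction (a) $\Rightarrow$ (b) is immediate, since if $\theta$ acts coordinatewise then $\theta(r,0,0)=(\theta_1(r),0,0)\in(R',0,0)$ and similarly $\theta(0,0,s)=(0,0,\theta_3(s))\in(0,0,S')$. So the real content is to show that the two containment hypotheses in (b) force $\theta$ to respect the entire triangular decomposition.

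First I would exploit the multiplicative structure of $\mathcal{L}=(R,N,S)$. The key observation is that $(R,0,0)$ and $(0,0,S)$ are characterized ring-theoretically inside $\mathcal{L}$: writing $e=(1_R,0,0)$ and $f=(0,0,1_S)$ when identities exist, or more robustly using the corner multiplications, one has $(R,0,0)=e\mathcal{L}e$-type relations and $(0,N,0)=e\mathcal{L}f$. Since $R$ and $S$ need not be unital, I would avoid idempotents and instead argue directly with the multiplication rule $(r,n,s)(r_1,n_1,s_1)=(rr_1,rn_1+ns_1,ss_1)$. The plan is to define $\theta_1$ and $\theta_3$ as the restrictions of $\theta$ to the corners $(R,0,0)$ and $(0,0,S)$, which land in $(R',0,0)$ and $(0,0,S')$ by hypothesis (b); these are ring homomorphisms because the corner subrings are closed under multiplication and $\theta$ is a ring isomorphism, and they are bijective because $\theta^{-1}$ satisfies the symmetric hypotheses (which I would verify by applying (b) to $\theta^{-1}$, using that $\theta$ is an isomorphism of triangular algebras).

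Next I would handle the middle component. The submodule $(0,N,0)$ can be recovered as the set of products $(R,0,0)\cdot\mathcal{L}\cdot(0,0,S)$, or intrinsically as the two-sided "off-diagonal" piece annihilated on appropriate sides. Concretely, for $n\in N$ one has $(0,n,0)=(1_R,0,0)(0,n,0)(0,0,1_S)$ in the unital case, and in general $(0,N,0)$ is spanned by such products. Applying $\theta$ and using (b) together with the fact that $\theta$ is multiplicative, I would show $\theta(0,N,0)\subseteq(0,N',0)$, so that $\theta_2:N\to N'$ defined by the middle coordinate is well-defined. The hard part will be establishing that $\theta$ cannot mix a pure diagonal element into the off-diagonal part or vice versa under general (possibly non-unital) hypotheses; this is where I expect to spend the most care, since without identities I cannot simply multiply by idempotents, and I would instead need to use that the three pieces $(R,0,0)$, $(0,N,0)$, $(0,0,S)$ are distinguished by their products (for instance, $(0,N,0)$ squares to zero and is annihilated on the left by $(0,0,S)$, properties preserved by any ring isomorphism).

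Finally I would assemble the pieces: having located the three coordinates of $\theta$ as $\theta_1$, $\theta_2$, $\theta_3$, I would verify the bimodule compatibility $\theta_2(rn)=\theta_1(r)\theta_2(n)$ and $\theta_2(ns)=\theta_2(n)\theta_3(s)$ directly from the multiplication rule: applying $\theta$ to the identity $(r,0,0)(0,n,0)=(0,rn,0)$ and using that $\theta$ is a homomorphism yields $(\theta_1(r),0,0)(0,\theta_2(n),0)=(0,\theta_1(r)\theta_2(n),0)$, which must equal $\theta(0,rn,0)=(0,\theta_2(rn),0)$; the right-module identity follows symmetrically. This shows $\theta$ has the coordinatewise form asserted in (a), completing the equivalence. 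The overall strategy is therefore a structural decomposition argument, with the only delicate point being the non-unital bookkeeping needed to separate the diagonal from the off-diagonal piece.
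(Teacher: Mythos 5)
Your plan follows the paper's proof in every step you actually carry out: the direction (a)$\Rightarrow$(b) is dismissed as immediate, $\theta_1$ and $\theta_3$ are obtained by restricting $\theta$ to the corners $(R,0,0)$ and $(0,0,S)$ (which land in the right places by hypothesis (b)), and the bimodule compatibility $\theta_2(rn)=\theta_1(r)\theta_2(n)$, $\theta_2(ns)=\theta_2(n)\theta_3(s)$ is extracted by applying $\theta$ to $(r,0,0)(0,n,0)=(0,rn,0)$ and its right-hand analogue, exactly as in the paper. The difficulty is that the one step you explicitly defer --- showing $\theta(0,N,0)\subseteq(0,N',0)$, so that $\theta_2$ is well defined and $\theta$ is genuinely coordinatewise --- is the only step with real content, and the tools you propose for it do not suffice. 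The properties you list ($(0,N,0)$ squares to zero and is annihilated on the left by $(0,0,S)$) do not characterize the off-diagonal piece without identities. Indeed, in the stated non-unital generality the implication (b)$\Rightarrow$(a) is false: take $R=\mathbb{Z}$ with zero multiplication, $S=0$, $N=\mathbb{Z}$ with trivial actions, so that $\mathcal{L}$ has identically zero multiplication; then $\theta(r,n,0)=(r+n,n,0)$ is an additive bijection, trivially multiplicative, satisfies $\theta(R,0,0)\subseteq(R,0,0)$ and $\theta(0,0,S)\subseteq(0,0,S)$, yet $\theta(0,1,0)=(1,1,0)\notin(0,N,0)$. So no intrinsic ring-theoretic description of $(0,N,0)$ can rescue the argument in that generality.

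To be fair, the paper's own proof has the same gap: it simply writes $(0,\theta_2(n),0)=\theta(0,n,0)$ as a definition, tacitly assuming the containment you flag. The statement and proof do work when $R$, $S$ are unital and $N$ is a unital bimodule (which is the setting in which the lemma is applied): hypothesis (b) puts $\theta(1_R,0,0)=(e',0,0)$ and $\theta(0,0,1_S)=(0,0,f')$, and then from $(0,n,0)=(1_R,0,0)(0,n,0)=(0,n,0)(0,0,1_S)$ one gets that $\theta(0,n,0)=(e',0,0)\,\theta(0,n,0)$ has third coordinate $0$ and $\theta(0,n,0)=\theta(0,n,0)(0,0,f')$ has first coordinate $0$, whence $\theta(0,N,0)\subseteq(0,N',0)$. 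You should either add the unitality hypothesis and give this two-line computation, or find some other restriction; as written, your plan (like the paper's proof) stops exactly where the proof is needed. A minor further point: deducing bijectivity of $\theta_1,\theta_3$ by "applying (b) to $\theta^{-1}$" is not available before the decomposition is established; once $\theta$ is known to be coordinatewise, bijectivity of each component follows directly from that of $\theta$, which is the cleaner route (and the one the paper takes).
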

 
 \begin{proof}   
 The proof that $(a)$ implies $(b)$ is standard. For the converse, we define $\theta_1:R\rightarrow R'$, $\theta_2:N\rightarrow N'$ and $\theta_3:S\rightarrow S'$ by $(\theta_1(r),0,0)=\theta(r,0,0)$, $(0,\theta_2(n),0)=\theta(0,n,0)$ and $(0,0,\theta_3(s))=\theta(0,0,s)$.  It is standard to show that  $\theta_1$ and $\theta_3$ are homomorphism of  rings. Note that $(0,\theta_2(rn),0)=\theta(0,rn,0)=\theta((r,0,0)(0,n,0))=(\theta_1(r),0,0)(0,\theta_2(n),0)=(0,\theta_1(r)\theta_2(n),0)$. Thus, $\theta_2(rn)=\theta_1(r)\theta_2(n)$ and by similar methods we show that $\theta_2(ns)=\theta_2(n)\theta_3(s)$. Moreover, by the fact that $\theta$ is an isomorphism, we have that $\theta_1$,  $\theta_2$ and $\theta_3$ are isomorphisms.  \end{proof}




The following result under certain conditions, we characterize the partial skew group rings over a triangular matrix algebras, i.e, a triangular matrix representation of partial skew group rings.

\begin{theorem} Let $\mathcal{L}=(R,N,S)$ and $\alpha=\{\alpha_g:D_{g^{-1}}\rightarrow D_g, g\in G\}$ a partial action of $G$ on $\mathcal{L}$. Suppose that for each $g\in G$,  the ideals $D_g$ are generated by central idempotents.  Then there exists  partial actions $\alpha_1$, $\alpha_3$ of $G$ on $R$ and $S$, a relative $(\alpha_1,\alpha_3)$-partial action $\alpha_2$ of $G$ on $N$ such that $\mathcal{L}*_{\alpha}G\simeq (R*_{\alpha_1}G, M, S*_{\alpha_3}G)$, where $M$ is a $(R*_{\alpha_1}G, S*_{\alpha_3}G)$-bimodule. \end{theorem}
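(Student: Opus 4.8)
**The plan is to decompose the partial crossed product $\mathcal{L}*_\alpha G$ element-wise according to the triangular matrix structure, then identify the three "diagonal and off-diagonal" pieces as the data of a triangular matrix algebra.**

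First I would invoke Lemma 47 (the ideal-structure lemma) to write each $D_g = (R_g, N_g, S_g)$, where $R_g \triangleleft R$ and $S_g \triangleleft S$ are generated by central idempotents $1_g^R$ and $1_g^S$, and $N_g$ is an $(R,S)$-bi-submodule of $N$. This is exactly the setup of the previous theorem (Theorem 48), so I can extract from it the three partial actions: $\alpha_1 = \{\alpha_g^R : R_{g^{-1}} \to R_g\}$ on $R$, $\alpha_3 = \{\alpha_g^S : S_{g^{-1}} \to S_g\}$ on $S$, and the relative $(\alpha_1,\alpha_3)$-partial action $\alpha_2 = \{\alpha_g^N : N_{g^{-1}} \to N_g\}$ on $N$, each defined by projecting $\alpha_g$ onto the appropriate corner via the natural inclusions and projections. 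The verification that these are genuine partial actions (and that $\alpha_2$ is relative to $(\alpha_1,\alpha_3)$) follows the same corner-projection argument already carried out in Theorem 48, so I would cite it rather than repeat it.

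Next I would define the candidate isomorphism. Every element of $\mathcal{L}*_\alpha G$ has the form $\sum_g (r_g, n_g, s_g)\delta_g$ with $(r_g,n_g,s_g) \in D_g$, and I would send it to the triple whose first coordinate is $\sum_g r_g \delta_g \in R*_{\alpha_1}G$, whose third coordinate is $\sum_g s_g \delta_g \in S*_{\alpha_3}G$, and whose middle coordinate $\sum_g n_g \delta_g$ lives in the set
\begin{center}
$M = \Big\{ \sum_{g\in G} n_g\delta_g \mid n_g \in N_g \Big\}$,
\end{center}
which I would equip with the obvious left $R*_{\alpha_1}G$- and right $S*_{\alpha_3}G$-module structures, using the relative-action compatibility conditions (property (i) of the relative partial action) to check that the scalar multiplications land back in $M$. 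The map is plainly a bijection of underlying abelian groups since the corner decomposition of each $D_g$ is a direct sum. Conceptually this is an application of Lemma 49 (the corner-preserving isomorphism criterion): the map respects the $(R,0,0)$ and $(0,0,S)$ corners at each group-graded component, so once it is shown to be a ring homomorphism its restriction to corners automatically gives the desired factorization.

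The main obstacle — and the only step requiring real care — is verifying that the map respects multiplication, i.e. that the twisted convolution product $(a_g\delta_g)(b_h\delta_h) = \alpha_g(\alpha_g^{-1}(a_g)b_h)\delta_{gh}$ in $\mathcal{L}*_\alpha G$ matches, corner by corner, the products in $R*_{\alpha_1}G$, in $S*_{\alpha_3}G$, and the bimodule action defining $M$. Here I would expand $\alpha_g$ applied to a triple, use that $\alpha_g$ preserves corners (the hypothesis $\alpha_g(1_{g^{-1}}^R,0,0)=(1_g^R,0,0)$ together with property (i) of $\alpha_2$ guarantees $\alpha_g(r,n,s) = (\alpha_g^R(r), \alpha_g^N(n), \alpha_g^S(s))$ on the relevant ideals), and then read off that the $(1,1)$-entry reproduces the $R*_{\alpha_1}G$ multiplication rule, the $(2,2)$-entry reproduces the $S*_{\alpha_3}G$ rule, and the upper-right entry reproduces the triangular cross-term $a\cdot n_1 + n\cdot s_1$ translated through the relative action. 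The delicate point is tracking the idempotents $1_g^R,1_g^S$ through the off-diagonal term so that the mixed products $r_g\delta_g \cdot n_h\delta_h$ and $n_g\delta_g \cdot s_h\delta_h$ assemble correctly into the single middle coordinate; this is where property (i) of the relative partial action is indispensable. Once multiplicativity is established, injectivity and surjectivity are immediate from the direct-sum decomposition, completing the identification $\mathcal{L}*_\alpha G \simeq (R*_{\alpha_1}G, M, S*_{\alpha_3}G)$.
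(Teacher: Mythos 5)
Your argument is correct, but it takes a genuinely different route from the paper's. You construct the isomorphism directly inside the partial crossed product: you define $M=\bigl\{\sum_{g}n_g\delta_g \mid n_g\in N_g\bigr\}$ intrinsically, transport the twisted convolution corner by corner, and verify multiplicativity by expanding $\alpha_g(\alpha_{g^{-1}}(a_g)b_h)$ using the corner\hspace{0pt}-preservation of each $\alpha_g$ and property $(i)$ of the relative action --- and that computation does go through as you describe. The paper instead globalizes: it invokes the enveloping actions $(T,\beta)$, $(T_i,\beta^i)$ of $\mathcal{L}$, $R$, $N$, $S$, uses the global result (in the spirit of \cite{HG}) that $T*_{\beta}G=(T_1*_{\beta^1}G,\,T_2*_{\beta^2}G,\,T_3*_{\beta^3}G)$, defines $M=\{f\in T_2*_{\beta^2}G : (0,f,0)\in\mathcal{L}*_{\alpha}G\}$, and then only has to check that the two subsets of $T*_{\beta}G$ coincide, so multiplicativity is inherited from the ambient global crossed product rather than verified by hand. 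Your version is more self-contained (no globalization needed for the isomorphism itself) at the price of the explicit corner-by-corner check, which you correctly identify as the only delicate step; the paper's version trades that computation for reliance on the globalization machinery. One shared caveat: both arguments need the corner conditions $\alpha_g(1_{g^{-1}}^R,0,0)=(1_g^R,0,0)$ and $\alpha_g(0,0,1_{g^{-1}}^S)=(0,0,1_g^S)$ from Theorem 48 in order to extract $\alpha_1,\alpha_2,\alpha_3$; these are not stated in the theorem's hypotheses, but since the paper's own proof also cites Theorem 48 this is an imprecision of the paper rather than a gap in your proposal.
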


\begin{proof}  By assumption, Lemmas  47 and 49 and Theorem 48, for each $g\in G$, we have the  ideals $R_g$, $S_g$ of $R$ and $S$, submodules $N_g$ of $N$ with isomorphisms \begin{center} $\alpha^{g}_{1}:R_{g^{-1}}\rightarrow R_g$,\end{center} 
$\alpha^{g}_{2}:N_{g^{-1}}\rightarrow N_g$ and $\alpha^g_3:S_{g^{-1}}\rightarrow S_g$, and we have the  partial actions  $\alpha_1$ and $\alpha_3$  of $G$ on $R$ and $S$ and $\alpha_2$  a relative $(\alpha_1,\alpha_2)$-partial action of $G$ on $N$.   We claim that $\mathcal{L}*_{\alpha}G\simeq (R*_{\alpha_1}G,M,S*_{\alpha_3}G)$, where  $M$  is a   $(R*_{\alpha_1}G, S*_{\alpha_3}G)$-bimodule.  In fact, by Theorem 44 and (\cite{DE}, Theorem 4.5) we have that  $(\mathcal{L},\alpha)$,  $(R,\alpha_1)$, $(N,\alpha_2)$ and $(S,\alpha_3)$ have enveloping actions $(T,\beta)$, $(T_1,\beta^1)$, $(T_2,\beta^2)$ and $(T_3,\beta^3)$, respectively. By similar methods presented in \cite{HG} we have that $T*_{\beta}G=(T_1*_{\beta^1}G, T_2*_{\beta^2}G,T_3*_{\beta^3}G)$, where $T_2 *_{\beta^2}G$ is a $(T_1*_{\beta^1}G,T_3*_{\beta^3}G)$-bimodule whose elements are the finite sums $\sum_{g\in G} a_g\delta_g$ with usual sum and multiplication rule is $(s_h\delta_h)(a_g\delta_g)=s\beta^2_h(a_g)\delta_{hg}$ and $(a_g\delta_g)(w\delta_h)=a_g\beta^3_g(w)\delta_{gh}$, for all $s_h\delta_h\in T_1*_{\beta^1}G$, $a_g\delta_g\in T_2*_{\beta^2}G$ and 
$w\delta_h\in T_3*_{\beta^3}G$. We consider $M=\{f\in T_2*_{\beta_2}G: (0,f,0)\in \mathcal{L}*_{\alpha}G\}$
 and let  \begin{center} $y\in (R*_{\alpha_1}G,M,S*_{\alpha_3}G)$. \end{center}  Then $y=(\sum_{g\in G}r_g\delta_g, \sum_{g\in G}n_g\delta_g,\sum_{g\in G}s_g\delta_g)=(\sum_{g\in G}r_g\delta_g,0,0)+(0,\sum_{g\in G}n_g\delta_g,0)+(0,0,\sum_{g\in G}s_g\delta_g)=\sum_{g\in G}(r_g,0,0)\delta_g+\sum_{g\in G}(0,n_g,0)\delta_g+\sum_{g\in G}(0,0,s_g)\delta_g\in \mathcal{L}*_{\alpha}G$.

On the other hand, for each $(r_g,s_g,n_g)\delta_g\in \mathcal{L}*_{\alpha}G$ we have that $(r_g,n_g,s_g)\delta_g=(r_g\delta_g, n_g\delta_g,s_g\delta_g)\in (R*_{\alpha_1}G, M,S*_{\alpha_3}G)$, since $n_g\delta_g\in M$ because of $n_g\delta_g\in T_1*_{\beta_1}G$ and $(0,n_g\delta_g,0)\in \mathcal{L}*_{\alpha}G$.  So, the result follows.\end{proof}

As an immediate consequence we have the following result.

\begin{corollary} Let $\alpha$ be a partial action of a group $G$ on $R$. Then the partial action $\alpha$ extends to a partial action $\overline{\alpha}$ on $\mathcal{L}=(R,R,R)$ and $\mathcal{L}*_{\overline{\alpha}}G= (R*_{\alpha}G, R*_{\alpha}G, R*_{\alpha}G)$.
 \end{corollary}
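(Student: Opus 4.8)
The plan is to specialize the preceding theorem (the triangular matrix representation of partial skew group rings) to the case where the bimodule and both corner rings coincide with $R$. First I would verify that the partial action $\alpha$ on $R$ genuinely induces a partial action $\overline{\alpha}$ on $\mathcal{L}=(R,R,R)$. The natural candidate is to set, for each $g\in G$, the ideal $\overline{D}_g=(D_g,D_g,D_g)$ and define $\overline{\alpha}_g:\overline{D}_{g^{-1}}\rightarrow \overline{D}_g$ componentwise by $\overline{\alpha}_g(a,n,b)=(\alpha_g(a),\alpha_g(n),\alpha_g(b))$. Here the middle slot uses $\alpha_g$ itself as the relative $(\alpha,\alpha)$-action $\alpha_2$ on $N=R$, which makes sense because $\alpha_g$ is already a ring isomorphism $D_{g^{-1}}\to D_g$ and hence in particular an isomorphism of $(D_{g^{-1}},D_{g^{-1}})$-bimodules to $(D_g,D_g)$-bimodules. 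I would then check that $\overline{D}_g$ is generated by the central idempotent $(1_g,0,1_g)$ of $\mathcal{L}$ (viewing $1_g$ diagonally), so that the hypotheses of the previous theorem are met, and that the compatibility axioms (i)--(iv) of a relative partial action are inherited slotwise from the corresponding axioms for $\alpha$ on $R$.

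Once $\overline{\alpha}$ is in place with $\alpha_1=\alpha_2=\alpha_3=\alpha$, I would invoke the previous theorem directly to obtain the isomorphism $\mathcal{L}*_{\overline{\alpha}}G\simeq (R*_{\alpha}G,\,M,\,R*_{\alpha}G)$, where $M$ is the $(R*_{\alpha}G,R*_{\alpha}G)$-bimodule constructed there as $M=\{f\in T_2*_{\beta_2}G:(0,f,0)\in\mathcal{L}*_{\overline{\alpha}}G\}$. The remaining task is to identify $M$ explicitly with $R*_{\alpha}G$. The key observation is that in the identically-coinciding situation the enveloping data collapse: the three enveloping actions $(T_1,\beta^1)$, $(T_2,\beta^2)$, $(T_3,\beta^3)$ produced for $R$, $N=R$, $S=R$ are all the \emph{same} enveloping action $(T,\beta)$ of $(R,\alpha)$, since the relative action $\alpha_2=\alpha$ is built from exactly the same ideals $D_g$ and maps $\alpha_g$ as $\alpha_1$ and $\alpha_3$. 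Consequently the bimodule $T_2*_{\beta^2}G$ equals $T*_{\beta}G=T*_{\beta}G$ as an abelian group, and the description of $M$ as the middle slot of elements of $\mathcal{L}*_{\overline{\alpha}}G$ forces its degree-$g$ component to be exactly $D_g\delta_g$, which is precisely the degree-$g$ component of $R*_{\alpha}G$.

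To finish I would make this identification precise by exhibiting the map $R*_{\alpha}G\to M$ sending $\sum_g a_g\delta_g$ (with $a_g\in D_g$) to the same expression viewed inside $T_2*_{\beta^2}G$, and checking that it is well defined, bijective onto $M$, and compatible with the left and right actions of $R*_{\alpha}G$ given by the multiplication rules $(s_h\delta_h)(a_g\delta_g)=s\,\beta^2_h(a_g)\delta_{hg}$ and $(a_g\delta_g)(w\delta_h)=a_g\,\beta^3_g(w)\delta_{gh}$ inherited from the previous theorem. Because all three actions are the single action $\alpha$ (globalized to $\beta$), these bimodule multiplications reduce on the relevant ideals to the ordinary multiplication of $R*_{\alpha}G$ on itself, so $M\cong R*_{\alpha}G$ as an $(R*_{\alpha}G,R*_{\alpha}G)$-bimodule. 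Substituting this identification into $(R*_{\alpha}G,M,R*_{\alpha}G)$ yields the claimed equality $\mathcal{L}*_{\overline{\alpha}}G=(R*_{\alpha}G,R*_{\alpha}G,R*_{\alpha}G)$.

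I expect the main obstacle to be the verification that the relative action axiom (iii), $\overline{\alpha}_g(N_{g^{-1}}\cap N_h)=N_g\cap N_{gh}$, holds for the middle slot, i.e. that $\alpha_g(D_{g^{-1}}D_h)=D_gD_{gh}$; this is exactly axiom (ii) of Definition \ref{def1} for $\alpha$, so it transfers cleanly, but one must be careful that the bimodule structure and the ring structure use compatible idempotents $1_g$ throughout. The bookkeeping needed to see that $M$ has \emph{exactly} the components $D_g\delta_g$ (neither too large nor too small) is the only genuinely delicate point, and it rests on the explicit construction of $M$ in the proof of the previous theorem rather than on any new computation.
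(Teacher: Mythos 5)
Your proposal is correct and matches the paper's intent exactly: the paper offers no separate argument, presenting the corollary as an immediate specialization of the preceding triangular-matrix theorem to $R=N=S$ with $\alpha_1=\alpha_2=\alpha_3=\alpha$, which is precisely what you do (your componentwise construction of $\overline{\alpha}$ on the ideals $(D_g,D_g,D_g)$ generated by the central idempotents $(1_g,0,1_g)$, and your identification of the middle bimodule $M=\bigoplus_{g}D_g\delta_g$ with $R*_{\alpha}G$, simply make explicit what the paper leaves to the reader). The only implicit hypothesis you rely on --- that each $D_g$ is generated by a central idempotent so the enveloping-action machinery of the theorem applies --- is the paper's standing convention, so no gap results.
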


\section{Dimensions of Crossed Products}\

In this section, we study some homological dimensions of partial crossed products. Moreover, we give some characterizations for the partial crossed products to be  symmetric  and Frobenius algebras. We begin with the following proposition. 

    \begin{proposition} Let $R$ be a $K$-algebra,  where $K$ is a commutative ring, $\alpha$ an unital  twisted partial action of a finite group $G$ on $R$ such that  $|G|$ a unit in $R$ and $M$ a left $R*_{\alpha,w}G$-module. If $N$ is a submodule of $M$ such that $N$ is a direct summand of $M$ as 
    $R$-module, then $N$ is a direct summand as $R*_{\alpha,w}G$-module.\end{proposition}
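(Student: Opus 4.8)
The plan is to use a standard averaging (Maschke-type) argument, which is the natural approach whenever one has a finite group acting and the group order is invertible. The key idea is to take the given $R$-linear projection onto $N$ and ``average'' it over the group $G$ to produce an $R*_{\alpha,w}G$-linear projection onto $N$. Let me sketch the construction and identify where the twisting and partiality complicate matters.

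First I would let $\pi\colon M\to N$ be an $R$-module projection with $\pi|_N=\mathrm{id}_N$, which exists by hypothesis since $N$ is a direct $R$-summand. The goal is to define a map $\widetilde{\pi}\colon M\to N$ that is $R*_{\alpha,w}G$-linear and still restricts to the identity on $N$. The natural candidate is an averaged map of the form
\begin{equation*}
\widetilde{\pi}(m)=\frac{1}{|G|}\sum_{g\in G}(1_R\delta_g)\,\pi\bigl((1_R\delta_g)^{-1}m\bigr),
\end{equation*}
or more precisely an expression built from the action of the elements $\delta_g$ and their interaction with $\pi$. Here I would first need to make sense of the ``inverse'' of $\delta_g$ in the partial setting, where $\delta_g$ need not be invertible because $D_g$ is only an ideal of $R$; the correct formulation uses $\alpha_{g^{-1}}$ together with the idempotents $1_g$ and the twisting elements $w_{g,h}$, so the averaging operator should be written intrinsically as a sum over $g$ involving $\alpha_g$, $\alpha_{g^{-1}}$, and the cocycle $w$, rather than via a naive conjugation by an invertible symbol.

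Once $\widetilde{\pi}$ is correctly defined, the verification splits into three checks: that $\widetilde{\pi}$ lands in $N$ (immediate, since each summand involves $\pi$ whose image is $N$ and $N$ is an $R*_{\alpha,w}G$-submodule), that $\widetilde{\pi}|_N=\mathrm{id}_N$ (this is where $|G|$ being a unit is used, to cancel the averaging factor after the $g$-indexed terms collapse on $N$ using $R$-linearity of $\pi$), and finally that $\widetilde{\pi}$ is $R*_{\alpha,w}G$-linear. Given such a $\widetilde{\pi}$, the submodule $N$ is a direct summand with complement $\ker\widetilde{\pi}$, completing the proof; the decomposition $M=N\oplus\ker\widetilde{\pi}$ follows formally from $\widetilde{\pi}$ being an idempotent $R*_{\alpha,w}G$-endomorphism fixing $N$.

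I expect the main obstacle to be proving the $R*_{\alpha,w}G$-linearity of $\widetilde{\pi}$, specifically checking equivariance with respect to multiplication by a generator $b_h\delta_h$. This is exactly where the twisted partial structure bites: one must commute $b_h\delta_h$ past the averaging sum, reindex the summation $g\mapsto hg$ (or $h^{-1}g$), and absorb the resulting products of cocycle terms $w_{h,g}$ and idempotents $1_g$ using the cocycle identity (Definition \ref{def1}(v)) and the compatibility $\alpha_g\circ\alpha_h(a)=w_{g,h}\alpha_{gh}(a)w_{g,h}^{-1}$ from Definition \ref{def1}(iii). The presence of the partial domains $D_g$ means the idempotents $1_g$ must be tracked carefully so that every expression lives in the correct ideal; the genuine work is a careful bookkeeping computation verifying that, after the substitution, the cocycle factors rearrange to give precisely $b_h\delta_h\cdot\widetilde{\pi}(m)$, with the $R$-linearity of the original $\pi$ invoked to pull the $\alpha$-twisted scalars through.
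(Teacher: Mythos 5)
Your proposal is correct and follows essentially the same route as the paper: the paper defines the averaged projection $\Psi(v)=\frac{1}{|G|}\sum_{g\in G}w^{-1}_{g^{-1},g}1_{g^{-1}}\delta_{g^{-1}}\pi(1_g\delta_g v)$, which is exactly the intrinsic formulation (using $1_g\delta_g$ and the cocycle correction in place of a naive ``$\delta_g^{-1}$'') that you describe. The verifications you outline --- $\Psi$ restricting to the identity on $N$ and $\Psi$ being $R*_{\alpha,w}G$-linear via reindexing and the cocycle identity --- are precisely the ones the paper leaves to the reader.
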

    
    \begin{proof} Let $\pi:M\rightarrow N$ be the natural projection as $R$-module. We define  $\Psi:M\rightarrow N$ by $\Psi(v)=\frac{1}{|G|}\sum_{g\in G}w^{-1}_{g^{-1},g}1_{g^{-1}}\delta_{g^{-1}}\pi(1_g\delta_gv)$. It is not difficult to see that   $\Psi$ is an homomorphism of left $A*_{\alpha,w}G$-modules and $\Psi(\alpha)=\alpha$, for all  $\alpha\in N$. So, the result follows. \end{proof}
    
    The following definition is well-known.
    
    \begin{definition} Let  P be an $R$-module. We say that $P$ is projective if and only if for every surjective module homomorphism $f : N\rightarrow M$ and every module homomorphism $g:P\rightarrow M$, there exists a homomorphism $h:P\rightarrow N$ such that $fh=g$. \end{definition}

    \begin{proposition} Let $\alpha$ be an unital  twisted partial action of a group $G$ on a ring $R$ and  $P$ a left $R*_{\alpha,w}G$-module.  If  $P$ is projective as  left $R*_{\alpha,w}G$-module, then   $P$ is projective as left $R$-module.\end{proposition}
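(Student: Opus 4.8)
The plan is to show that the forgetful functor from $R*_{\alpha,w}G$-modules to $R$-modules sends projectives to projectives, and the cleanest way is to exhibit $R*_{\alpha,w}G$ itself as a module that is free (hence projective) over $R$. Recall that as a right $R$-module via the inclusion $\phi:R\to R*_{\alpha,w}G$, we have the internal direct sum decomposition $R*_{\alpha,w}G=\bigoplus_{g\in G}D_g\delta_g$; I want the analogous statement as a \emph{left} $R$-module. First I would record that each summand $D_g\delta_g$ is, as a left $R$-module, isomorphic to the left ideal $D_g$ of $R$ (the map $a\delta_g\mapsto a$ is additive and left $R$-linear because the multiplication rule gives $r\delta_1\cdot a\delta_g=\alpha_1(\alpha_1^{-1}(r)a)w_{1,g}\delta_g=ra\,\delta_g$, using $\alpha_1=\mathrm{id}$ and $w_{1,g}=1$ from Definition~\ref{def1}(i),(iv)). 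Since each $D_g$ is a direct summand of $R$ as a left $R$-module (being generated by the central idempotent $1_g$, so $R={}_RD_g\oplus{}_R R(1_R-1_g)$), it follows that $R*_{\alpha,w}G$ is a direct sum of left $R$-modules each of which is a direct summand of ${}_RR$. Consequently ${}_R(R*_{\alpha,w}G)$ is projective as a left $R$-module.

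Next I would invoke the general principle that projectivity is preserved under restriction along a ring map whenever the larger ring is projective over the smaller one. Concretely, the standard characterization says $P$ is projective if and only if $P$ is a direct summand of a free module. Since $P$ is projective over $S:=R*_{\alpha,w}G$, there is an $S$-module $Q$ with $P\oplus Q\cong S^{(\Lambda)}$ for some index set $\Lambda$ (a free $S$-module). Restricting scalars to $R$ is an additive functor, so this remains a direct sum decomposition of left $R$-modules: ${}_RP\oplus{}_RQ\cong{}_R\bigl(S^{(\Lambda)}\bigr)\cong\bigl({}_RS\bigr)^{(\Lambda)}$. By the previous paragraph ${}_RS$ is projective over $R$, hence so is any direct sum of copies of it, and ${}_RP$ is a direct summand of a projective $R$-module. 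Therefore ${}_RP$ is projective as a left $R$-module, which is the claim.

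I do not expect a serious obstacle here; the only point requiring genuine care is the first paragraph, namely verifying that the left $R$-module structure on $R*_{\alpha,w}G$ really decomposes as $\bigoplus_g D_g\delta_g$ with each piece a direct summand of ${}_RR$. The multiplication in the partial crossed product is twisted, so one must check that left multiplication by $r\delta_1$ genuinely acts as $a\delta_g\mapsto ra\,\delta_g$ rather than introducing a cocycle factor; this is exactly where the normalization $\alpha_e=\mathrm{id}$ and $w_{e,g}=1$ is used, and I would state that computation explicitly. Everything else is the routine fact that restriction of scalars preserves direct sums and that direct summands of projectives are projective, so the proof reduces to assembling these observations.
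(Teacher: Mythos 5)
Your argument is correct and complete: the computation $(r\delta_1)(a_g\delta_g)=ra_g\delta_g$ (using $\alpha_e=\mathrm{id}_R$ and $w_{e,g}=1$, the latter acting as $1_g$ and absorbed into $a_g\in D_g$) exhibits ${}_R(R*_{\alpha,w}G)=\bigoplus_{g\in G}D_g\delta_g$ with each summand isomorphic to $R1_g$, a direct summand of ${}_RR$, so ${}_R(R*_{\alpha,w}G)$ is projective; restriction of scalars then carries a direct summand of a free $R*_{\alpha,w}G$-module to a direct summand of a projective $R$-module. This is, however, genuinely different from what the paper prints under this proposition: that proof starts from a short exact sequence $0\to K\to F\to M\to 0$ of $R*_{\alpha,w}G$-modules with $F$ free, \emph{assumes} $M$ is projective as an $R$-module to split the sequence over $R$, and then invokes the averaging operator of the preceding Maschke-type proposition to split it over $R*_{\alpha,w}G$ --- i.e., it establishes the \emph{converse} implication (projectivity over $R$ lifts to projectivity over the crossed product), and it tacitly requires $G$ finite with $|G|$ invertible in $R$, hypotheses absent from the statement. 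Your proof is the one that actually matches the statement as written, works for arbitrary $G$ with no invertibility assumption, and uses only that each $D_g$ is generated by a central idempotent; it also supplies the correct justification (projectivity, rather than the paper's claim of freeness, of ${}_R(R*_{\alpha,w}G)$) for the fact $pd_R(R*_{\alpha,w}G)=0$ used later in the global dimension theorem, where both implications are needed.
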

    
    \begin{proof}  
    
    Let $0\rightarrow K\rightarrow F\rightarrow M\rightarrow 0$  be an exact sequence of $R*_{\alpha,w}G$-modules such that $F$ is free. Since $M$ is projective as $R$-module, then this sequence splits as $R$-modules. Hence,   by Proposition 52  we have that this sequence splits as $R*_{\alpha,w}G$-modules. So, $M$ is a projective as left $R*_{\alpha,w}G$-module.\end{proof}
    
    A projective resolution $0\rightarrow X_n\rightarrow X_{n-1}\rightarrow \cdots \rightarrow  X_0\rightarrow  M \rightarrow 0$ of the left $R$-module $M$ is said to be of length $n$, where $X_i$ are projectives $R$-module for all $i\in \{0,\cdots,n\}$. The smallest such $n$ is called the projective dimension of $M$, denoted by $pd_R M$ (if  $M$ has no finite projective resolution, we set $pd_R M =\infty$.) In this case the  left global dimension of $R$ is $sup=\{pd M: M \,\, is \,\, a \,\, left \,\, R-module\}$. 
    
\begin{theorem}  Let $\alpha$ be  an unital  twisted partial action of  a finite group $G$ on a $K$-algebra $R$ such that $|G|^{-1}\in R$. Then $lgdim R=lgdim(R*_{\alpha,w}G)$.\end{theorem}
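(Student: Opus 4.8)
The plan is to prove the two inequalities $\mathrm{lgdim}(R*_{\alpha,w}G)\leq \mathrm{lgdim}\, R$ and $\mathrm{lgdim}\, R\leq \mathrm{lgdim}(R*_{\alpha,w}G)$ separately, using Proposition~53 as the central tool. First I would recall the standard module-theoretic characterization: the left global dimension of a ring $S$ equals $\sup\{pd_S M\}$ taken over all left $S$-modules, and for any module $M$ one has $pd_S M\leq n$ if and only if every $n$-th syzygy of $M$ is projective. The strategy is to transfer projective dimension back and forth between $R$ and $R*_{\alpha,w}G$ via the restriction and induction functors, controlling syzygies at each stage.

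For the inequality $\mathrm{lgdim}(R*_{\alpha,w}G)\leq \mathrm{lgdim}\, R$, let $M$ be a left $R*_{\alpha,w}G$-module and suppose $\mathrm{lgdim}\, R=n<\infty$. I would take a projective resolution of $M$ over $R*_{\alpha,w}G$ and look at its $n$-th syzygy $K$. Viewed as an $R$-module, $K$ is the $n$-th syzygy in a resolution by modules that are projective over $R$ (since $R*_{\alpha,w}G$-projectives restrict to $R$-projectives, as every free $R*_{\alpha,w}G$-module is free over $R$ because $R*_{\alpha,w}G$ is itself free, or at least projective, as a right $R$-module by Lemma~\ref{lema 1B} and the finiteness of $G$). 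Since $pd_R M\leq n$, the syzygy $K$ is projective as an $R$-module. The key point is now that $K$ is a submodule of a projective $R*_{\alpha,w}G$-module that is a direct summand over $R$, so by Proposition~53 (the averaging argument using $|G|^{-1}\in R$ and the idempotents $1_g$ together with the cocycle inverses $w_{g^{-1},g}^{-1}$) the inclusion of $K$ splits over $R*_{\alpha,w}G$; hence $K$ is projective as an $R*_{\alpha,w}G$-module and $pd_{R*_{\alpha,w}G}M\leq n$. Taking the supremum over $M$ gives the inequality.

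For the reverse inequality $\mathrm{lgdim}\, R\leq \mathrm{lgdim}(R*_{\alpha,w}G)$, I would use that $R$ is a direct summand of $R*_{\alpha,w}G$ as a left $R$-module (the Remark after Proposition~\ref{proposi\c\~ao 10} with $H=\{1_G\}$), so that induction along the ring extension $R\hookrightarrow R*_{\alpha,w}G$ behaves well. Given a left $R$-module $M$, form the induced module $R*_{\alpha,w}G\otimes_R M$; restricting it back to $R$ yields $M$ as a direct summand because $_R(R*_{\alpha,w}G)\cong {}_RR\oplus {}_RA$ splits. A projective $R*_{\alpha,w}G$-resolution of the induced module therefore restricts to an $R$-projective resolution whose syzygies contain the corresponding syzygies of $M$ as direct summands; finiteness of $\mathrm{lgdim}(R*_{\alpha,w}G)$ then forces these $R$-syzygies to be projective, bounding $pd_R M$ above by $\mathrm{lgdim}(R*_{\alpha,w}G)$. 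Combining the two inequalities yields the equality, with the convention that both sides are $\infty$ simultaneously when neither bound is finite.

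The main obstacle I anticipate is making precise the syzygy-splitting step in the first inequality: one must verify that the $n$-th syzygy $K$, which is a priori only an $R$-direct summand of a projective, sits inside an $R*_{\alpha,w}G$-module in such a way that Proposition~53 applies verbatim, i.e.\ that $K$ is genuinely a submodule of an $R*_{\alpha,w}G$-module which splits off over $R$. This requires that restriction sends $R*_{\alpha,w}G$-projectives to $R$-projectives, which hinges on $R*_{\alpha,w}G$ being projective (indeed free of finite rank by the idempotents $1_g$ generating the $D_g$) as a right $R$-module — a fact that uses both the finiteness of $G$ and the unital hypothesis on $\alpha$. Once this restriction-of-projectives fact is established, the remaining arguments are formal homological bookkeeping, and the hypothesis $|G|^{-1}\in R$ enters only through Proposition~53.
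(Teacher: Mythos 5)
Your overall strategy coincides with the paper's: the inequality $\mathrm{lgdim}(R*_{\alpha,w}G)\le \mathrm{lgdim}\,R$ is obtained from the averaging proposition (Proposition 52 in the paper's numbering, which you call Proposition 53) together with the fact that restriction sends projectives to projectives, and the reverse inequality is the change-of-rings bound $\mathrm{lgdim}\,R\le \mathrm{lgdim}(R*_{\alpha,w}G)+\mathrm{pd}_R(R*_{\alpha,w}G)$ with $\mathrm{pd}_R(R*_{\alpha,w}G)=0$. The paper simply cites McConnell--Robson 7.2.8 for the latter, while you reprove it directly via the induced module $R*_{\alpha,w}G\otimes_R M$ and the splitting $_R(R*_{\alpha,w}G)={}_RR\oplus {}_RA$; that part of your argument is sound (note only that $R*_{\alpha,w}G=\bigoplus_{g}D_g\delta_g$ is projective rather than free over $R$, since the $D_g$ are generated by central idempotents --- this changes nothing).

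There is, however, a genuine misstep in your first inequality, precisely at the point you yourself flag as the main obstacle. You assert that the $n$-th syzygy $K\subseteq P_{n-1}$ is an $R$-direct summand of $P_{n-1}$ because $K$ is $R$-projective, and then apply the averaging proposition to the inclusion $K\hookrightarrow P_{n-1}$. This does not follow: in the exact sequence $0\to K\to P_{n-1}\to K_{n-1}\to 0$ the cokernel $K_{n-1}$ is the $(n-1)$-st syzygy, which need not be $R$-projective when $\mathrm{pd}_R M=n$, so the sequence need not split over $R$ (a projective submodule of a projective module is in general not a summand; compare $0\to 2\mathbb{Z}\to \mathbb{Z}\to \mathbb{Z}/2\to 0$). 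The correct move is to go one step further along the resolution: since $K$ is $R$-projective, the epimorphism $P_n\to K$ splits over $R$, i.e.\ its kernel $K_{n+1}$ is an $R$-direct summand of $P_n$; the averaging proposition, applied to $K_{n+1}\subseteq P_n$, upgrades this to a splitting over $R*_{\alpha,w}G$, so $K$ is a direct summand of $P_n$ over $R*_{\alpha,w}G$, hence projective, and $\mathrm{pd}_{R*_{\alpha,w}G}M\le n$. This is exactly the content of the paper's Proposition 54 (whose proof, despite its statement, establishes that an $R*_{\alpha,w}G$-module which is projective over $R$ is projective over $R*_{\alpha,w}G$). With this repair your proof is complete and agrees in substance with the paper's.
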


\begin{proof} By Proposition 54 we have that $lgdim R*_{\alpha,w}G\leq lgdimR$. Using (\cite{mcconnel robson}, 7.2.8)  we have that $ lgldim R\leq lgldim (R*_{\alpha,w}G)+pd(R*_{\alpha,w}G)$. Since $R*_{\alpha,w}G$ is a left free $R$-module, then $pd(R*_{\alpha,w}G)=0$. So, $lgdim(R)\leq lgdim(R*_{\alpha,w}G)$. Therefore, $lgdim(R)=lgdim(R*_{\alpha,w}G)$ \end{proof}   

According to \cite{mcconnel robson}, a ring $S$ is said to be hereditary if all  $S$-modules have projective resolution of  lenght at most 1. Moreover, a ring $S$ is said to be semi-simple if  for any left ideal of $R$ is a direct summand of $R$ as left $R$-module. By (\cite{mcconnel robson}, 7.2.7) any semi-simple ring  has left global dimension  zero and any hereditary algebra has left global dimension 1.

The proof of the following result is direct consequence of the last theorem.

\begin{corollary}  With the same assumptions of Theorem 55, the following statements hold.

(i) $R$ is hereditary if and only if $R*_{\alpha,w}G$ is hereditary.

(ii) $R$ is a semisimple artinian ring if and only if $R*_{\alpha,w}G$  is a semisimple artinian ring. \end{corollary}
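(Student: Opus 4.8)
The plan is to derive Corollary 56 directly from Theorem 55, which establishes that $lgdim\,R = lgdim(R*_{\alpha,w}G)$ under the standing hypotheses that $G$ is finite, $R$ is a $K$-algebra, and $|G|^{-1}\in R$. The key observation is that both the hereditary property and the semisimple property of a ring are characterized purely in terms of its left global dimension, together with the facts already recalled in the excerpt from (\cite{mcconnel robson}, 7.2.7): a semisimple ring has left global dimension $0$, and a hereditary ring has left global dimension $1$. Since Theorem 55 asserts that these two invariants coincide for $R$ and $R*_{\alpha,w}G$, the statements about preservation of hereditarity and semisimplicity should follow almost immediately by comparing dimensions.

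For part $(i)$, I would argue as follows. The ring $R$ is hereditary precisely when every $R$-module has a projective resolution of length at most $1$, i.e. when $lgdim\,R\leq 1$. By Theorem 55 we have $lgdim\,R = lgdim(R*_{\alpha,w}G)$, so $lgdim\,R\leq 1$ if and only if $lgdim(R*_{\alpha,w}G)\leq 1$, which is exactly the condition that $R*_{\alpha,w}G$ is hereditary. This gives the equivalence in one line once the global-dimension characterization of hereditarity is invoked.

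For part $(ii)$, the subtlety is that being \emph{semisimple artinian} is slightly stronger than having global dimension zero for general rings; however, by (\cite{mcconnel robson}, 7.2.7) a ring is semisimple if and only if its left global dimension is zero, and for rings this notion of semisimplicity coincides with semisimple artinian (every module is projective, equivalently every short exact sequence splits, equivalently the ring is a finite product of matrix rings over division rings). So I would again translate: $R$ is semisimple artinian iff $lgdim\,R = 0$ iff $lgdim(R*_{\alpha,w}G)=0$ iff $R*_{\alpha,w}G$ is semisimple artinian, the middle equivalence being Theorem 55.

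The main obstacle, such as it is, is essentially bookkeeping rather than mathematics: one must be careful that the global-dimension characterizations quoted from \cite{mcconnel robson} are stated for the correct side (left modules) consistently with Theorem 55, and that ``semisimple'' in the sense of (\cite{mcconnel robson}, 7.2.7) is identified with ``semisimple artinian'' as used in the corollary. Since the paper works throughout with left $R*_{\alpha,w}G$-modules and $R*_{\alpha,w}G$ is a finitely generated free left $R$-module (a fact used in Theorem 55), there is no asymmetry to worry about and the identification is standard. Thus the proof is genuinely a direct consequence of Theorem 55, and I would present it in just two or three short lines per item.
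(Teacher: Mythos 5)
Your proposal is correct and follows exactly the route the paper intends: the paper simply declares the corollary a ``direct consequence of the last theorem,'' and your argument spells out the standard translation (hereditary $\Leftrightarrow$ left global dimension $\leq 1$, semisimple artinian $\Leftrightarrow$ left global dimension $=0$) combined with the equality $lgdim\,R = lgdim(R*_{\alpha,w}G)$ from Theorem 55. Nothing further is needed.
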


To study the weak global dimension we need the following three definitions that appears in \cite{mcconnel robson}.

\begin{definition} Let $M$ be a right $R$-module. Then $M$ is said to be flat if $M\otimes _{R}$ is an exact functor.\end{definition}

\begin{definition} Let $M$ be a right $R$-module. The flat dimension $fd M_R$ of the module $M$ is defined as the shortest lenght of a flat resolution of $M_R$ \begin{center} 0$\rightarrow F_n\rightarrow \cdots\rightarrow F_1\rightarrow F_0\rightarrow M\rightarrow 0$,\end{center} where  $F_i$, $i\in \{1,\cdots,n\}$ are flat modules.\end{definition}

\begin{definition} The weak global dimension of a ring $R$ is $w.dim R=sup\{fdM: M\,\, is\,\, a\,\, right \,\,R-module\}$ \end{definition}  

\begin{theorem} Let $\alpha$ be an unital  twisted partial action of s group $G$ on $R$ such that $|G|$ is a unit in $R$. Then $w.dim(R*_{\alpha,w}G)=w.dim(R)$. \end{theorem}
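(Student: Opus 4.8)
The plan is to establish $w.dim(R*_{\alpha,w}G) = w.dim(R)$ by proving the two inequalities separately, mirroring the structure of the proof of Theorem 55 for global dimension but working with flat modules and the $\mathrm{Tor}$ functor instead of projectives. The key observation is that $R*_{\alpha,w}G$ is free, hence flat, as a left (and right) $R$-module, since as an $R$-module it decomposes as $\bigoplus_{g\in G} D_g\delta_g$ with each $D_g$ a direct summand of $R$ (generated by a central idempotent). This flatness on both sides is what lets the two dimensions interact cleanly through change-of-rings.

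For the inequality $w.dim(R) \le w.dim(R*_{\alpha,w}G)$, I would invoke the change-of-rings spectral sequence or the standard flat-dimension analogue of (\cite{mcconnel robson}, 7.2.8). Concretely, since $R*_{\alpha,w}G$ is flat as a right $R$-module, any flat resolution of an $R$-module can be compared to one over $R*_{\alpha,w}G$; the relevant inequality reads $w.dim(R) \le w.dim(R*_{\alpha,w}G) + fd({}_R(R*_{\alpha,w}G))$, and because $R*_{\alpha,w}G$ is $R$-free its flat dimension over $R$ is zero, giving the bound directly.

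For the reverse inequality $w.dim(R*_{\alpha,w}G) \le w.dim(R)$, this is where the hypothesis that $|G|$ is a unit in $R$ becomes essential, exactly as in the projective case. The strategy is to establish a flat-module analogue of Propositions 52 and 54: if $N \subseteq M$ is a direct summand as $R$-modules then, using the averaging idempotent $\Psi(v)=\frac{1}{|G|}\sum_{g\in G}w^{-1}_{g^{-1},g}1_{g^{-1}}\delta_{g^{-1}}\pi(1_g\delta_g v)$ from Proposition 52, it is a direct summand as $R*_{\alpha,w}G$-modules; from this one deduces that a module flat over $R$ is flat over $R*_{\alpha,w}G$ (or, more precisely, that flat resolutions descend), yielding $fd_{R*_{\alpha,w}G}(M) \le fd_R(M)$ for every $R*_{\alpha,w}G$-module $M$. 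Taking the supremum over all modules gives the inequality.

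The main obstacle I anticipate is the reverse inequality: the averaging argument in Proposition 52 was stated for direct summands, and transferring it to a statement about \emph{flat} dimension requires care, since flatness is a property of the functor $-\otimes M$ rather than a splitting one can average pointwise. The cleanest route is probably to show that $-\otimes_{R*_{\alpha,w}G}(R*_{\alpha,w}G)$ and restriction along $R \hookrightarrow R*_{\alpha,w}G$ give adjoint functors whose composite is a direct summand of the identity (again via the $\frac{1}{|G|}$ averaging), so that $\mathrm{Tor}^{R*_{\alpha,w}G}_n(-,M)$ is a direct summand of $\mathrm{Tor}^R_n$ in the appropriate sense; once the $\mathrm{Tor}$ groups vanish above $w.dim(R)$, the same holds over the crossed product. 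I would verify the well-definedness and $R*_{\alpha,w}G$-linearity of the averaging map exactly as in Proposition 52 and then package the rest through the standard homological machinery in \cite{mcconnel robson}.
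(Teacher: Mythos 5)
Your proposal is correct in outline and, for the inequality $w.dim(R)\le w.dim(R*_{\alpha,w}G)$, coincides with the paper: both invoke the change-of-rings bound $w.dim(R)\le w.dim(R*_{\alpha,w}G)+fd({}_R(R*_{\alpha,w}G))$ and kill the last term using that $R*_{\alpha,w}G=\bigoplus_{g}D_g\delta_g$ is a direct sum of direct summands of $R$ (note this makes it projective, hence flat, over $R$, but not free in general --- a harmless slip you share with the paper). For the reverse inequality the two arguments genuinely differ. The paper disposes of it in one line by citing the restriction lemma ``flat left $R*_{\alpha,w}G$-modules are flat $R$-modules''; by itself that lemma only yields $fd_R(M)\le fd_{R*_{\alpha,w}G}(M)$, which is the comparison in the wrong direction, and the hypothesis $|G|^{-1}\in R$ never visibly enters the paper's argument there. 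Your route --- show by averaging that an $R*_{\alpha,w}G$-module which is flat over $R$ is a direct summand of the induced module $R*_{\alpha,w}G\otimes_R M$ and hence flat over the crossed product, then truncate a flat resolution at the $w.dim(R)$-th syzygy (equivalently, exhibit $\mathrm{Tor}_n^{R*_{\alpha,w}G}(-,M)$ as a direct summand of $\mathrm{Tor}_n^R(-,M)$) --- is the standard Maschke argument and is exactly the step needed to make this inequality honest; on this point your proof is more complete than the paper's.

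One caveat: your splitting leans on the averaging map of Proposition 52. A direct computation with the multiplication rule gives $(w^{-1}_{g^{-1},g}1_{g^{-1}}\delta_{g^{-1}})(1_g\delta_g)=1_{g^{-1}}\delta_e$, so for $n\in N$ the map satisfies $\Psi(n)=\frac{1}{|G|}\sum_{g\in G}1_{g^{-1}}n$, which equals $n$ only when $\sum_{g\in G}1_{g^{-1}}=|G|\cdot 1_R$ (e.g.\ in the global case). In a genuinely partial situation the normalization must be corrected (or one must assume invertibility of $\sum_{g}1_g$) before the Maschke step goes through; this affects the paper's Propositions 52 and 54 as well, so it is worth verifying rather than taking on faith.
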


\begin{proof} By (\cite{mcconnel robson}, Lemma 7.2.2) all flat left $R*_{\alpha,w}G$-modules are flat $R$-modules and we get that $w.dim R*_{\alpha,w}G\leq w.dim R$. Since by (\cite{mcconnel robson}, 7.2.8), \begin{center} $w.dim R\leq w.dim R*_{\alpha,w}G+fd_R R*_{\alpha}G$ \end{center}  and $R*_{\alpha,w}G$ is a flat $R$-module because of being  a free $R$-module, then  we have that $w.dim R\leq w.dim R*_{\alpha,w}G$\end{proof}

According  to (\cite{asky}, Theorem IV.2.1) a finite dimensional $K$ algebra $R$ is said to be a Frobenius algebra  if there exists a $K$-linear form $\varphi:A\rightarrow K$  such that $ker\varphi$ does not contain a nonzero left ideal of $R$.

\begin{theorem} \label{frobenius}Let  $\alpha$ be an unital twisted partial action of a finite group $G$ on a finite dimensional $K$-algebra $R$, where $K$ is a field. If $R$ is Frobenius, then $R*_{\alpha,w}G$ is  Frobenius.\end{theorem}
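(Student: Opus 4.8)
The plan is to exhibit a $K$-linear form $\Phi$ on $R*_{\alpha,w}G$ whose kernel contains no nonzero left ideal, by lifting the Frobenius form $\varphi\colon R\to K$ of $R$. Since $R$ is finite dimensional over the field $K$ and $G$ is finite, $R*_{\alpha,w}G=\bigoplus_{g\in G}D_g\delta_g$ is automatically finite dimensional, so producing such a $\Phi$ suffices. I would set $\Phi\bigl(\sum_{g\in G}a_g\delta_g\bigr)=\varphi(a_e)$, i.e. project onto the identity component $D_e\delta_e=R\delta_e$ and then apply $\varphi$; this is visibly $K$-linear.

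Next I would argue by contradiction. Suppose $I$ is a nonzero left ideal of $R*_{\alpha,w}G$ contained in $\ker\Phi$, and pick $0\neq x=\sum_{g}a_g\delta_g\in I$ together with some $h\in\mathrm{supp}(x)$, so that $0\neq a_h\in D_h$. For every $b\in D_{h^{-1}}$ the element $(b\delta_{h^{-1}})x$ again lies in $I\subseteq\ker\Phi$. Reading off its $\delta_e$-component via the rule $(a_g\delta_g)(b_t\delta_t)=\alpha_g(\alpha_g^{-1}(a_g)b_t)w_{g,t}\delta_{gt}$, only the summand coming from $g=h$ survives, and it equals $\alpha_{h^{-1}}(\alpha_h(b)a_h)w_{h^{-1},h}$. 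Hence $\varphi\bigl(\alpha_{h^{-1}}(\alpha_h(b)a_h)w_{h^{-1},h}\bigr)=0$ for all $b\in D_{h^{-1}}$; writing $c=\alpha_h(b)$, which ranges over all of $D_h$, this says $\varphi$ vanishes on the set $\alpha_{h^{-1}}(D_ha_h)w_{h^{-1},h}$.

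The decisive step is to recognize this set as a \emph{nonzero left ideal} of $R$. Since $1_h$ is central and $a_h=1_ha_h$, one has $D_ha_h=Ra_h$, a nonzero left ideal of $R$ contained in $D_h$. Because $\alpha_{h^{-1}}\colon D_h\to D_{h^{-1}}$ is a ring isomorphism onto the ideal $D_{h^{-1}}$, the image $\alpha_{h^{-1}}(Ra_h)$ is again a nonzero left ideal of $R$: given $r\in R$ and $y\in\alpha_{h^{-1}}(Ra_h)$, write $ry=(r1_{h^{-1}})y=\alpha_{h^{-1}}(w)\,y\in\alpha_{h^{-1}}(Ra_h)$ using surjectivity of $\alpha_{h^{-1}}$ and $Ra_h\trianglelefteq R$. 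Finally $w_{h^{-1},h}\in D_{h^{-1}}D_e=D_{h^{-1}}$ is invertible there, so right multiplication by it is a bijection of $D_{h^{-1}}$ carrying left ideals to left ideals; thus $L:=\alpha_{h^{-1}}(Ra_h)w_{h^{-1},h}$ is a nonzero left ideal of $R$ with $L\subseteq\ker\varphi$, contradicting the Frobenius property of $\varphi$. This forces $I=0$, so $\ker\Phi$ contains no nonzero left ideal and $R*_{\alpha,w}G$ is Frobenius.

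The main obstacle I anticipate is the bookkeeping around domains and the twist: one must verify that $L$ is closed under left multiplication by all of $R$ (not merely by $D_{h^{-1}}$), which rests on $\alpha_{h^{-1}}$ being onto $D_{h^{-1}}$ together with the centrality of the idempotents $1_g$, and that the cocycle $w_{h^{-1},h}$ is genuinely invertible in $D_{h^{-1}}$. The case $h=e$ is the harmless base case: since $w_{e,e}=1$ and $\alpha_e=\mathrm{id}$, the argument degenerates to $\varphi(Ra_e)=0$, directly contradicting that $\ker\varphi$ contains no nonzero left ideal.
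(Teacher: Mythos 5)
Your proposal is correct and follows essentially the same route as the paper: both lift the Frobenius form of $R$ through the projection onto the $\delta_e$-component and derive a contradiction by exhibiting a nonzero left ideal of $R$ inside $\ker\varphi$. Your write-up is in fact more detailed than the paper's (which merely asserts one may normalize $\eta$ so that $a_e\neq 0$ and that $\pi(I)$ is then a nonzero left ideal contained in $\ker f$); the only minor slip is that for a \emph{twisted} partial action $\alpha_{h^{-1}}^{-1}$ need not coincide with $\alpha_h$ (they differ by conjugation by $w_{h,h^{-1}}$), but since both are bijections of $D_{h^{-1}}$ onto $D_h$ this does not affect your argument.
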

\begin{proof}

We consider the natural projection $\pi:R*_{\alpha,w}G\rightarrow R$ that is an homomorphism of left $R$-modules. By assumption there exists a nonzero form $f:R\rightarrow K$ and we get $f\circ \pi$ is a nonzero form. Let $I$ be a nonzero left ideal of $R*_{\alpha,w}G$ that is contained in $ker (f\circ \pi)$. Thus, there exists a nonzero element $\eta=\sum a_g\delta_g\in I$ and we may assume that $\eta=a_e+\sum_{g\neq e }a_g\delta_g$. Hence, $0\neq \pi(I)\neq ker f$ which contradicts the fact that $R$ is Frobenius. So, $R*_{\alpha,w}G$ is a Frobenius algebra. 
\end{proof}

According to (\cite{asky}, Theorem 2.2)   a finite dimensional $K$-algebra $R$ is a symmetric algebra if there exists $K$-linear form $\varphi:R\rightarrow K$ such that 
$\varphi(ab)=\varphi(ba)$, for all $a,b\in R$ and $ker \varphi$ does not contain a nonzero one-sided ideal of $R$.

The proof of the following result follows the same ideas of Theorem \ref{frobenius}.

\begin{theorem} Let  $\alpha$ be an unital twisted partial action of a finite group $G$ on a finite dimensional $K$-algebra $R$, where $K$ is a field. If $R$ is  symmetric, then $R*_{\alpha,w}G$ is  symmetric.
\end{theorem}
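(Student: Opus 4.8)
The plan is to mimic the structure of the proof of Theorem \ref{frobenius} (the Frobenius case) essentially verbatim, upgrading it to account for the extra symmetry condition $\varphi(ab)=\varphi(ba)$ that a symmetric form must satisfy. First I would start from a symmetric $K$-linear form $f:R\rightarrow K$, so that $f(ab)=f(ba)$ for all $a,b\in R$ and $\ker f$ contains no nonzero one-sided ideal of $R$. The natural candidate form on $R*_{\alpha,w}G$ is $\Phi=f\circ\pi$, where $\pi:R*_{\alpha,w}G\rightarrow R$ is the projection onto the identity component $D_e=R$, i.e. $\pi\bigl(\sum_g a_g\delta_g\bigr)=a_e$. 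Exactly as in Theorem \ref{frobenius}, $\Phi$ is a nonzero $K$-linear form, and the argument there shows that $\ker\Phi$ contains no nonzero one-sided ideal: if $I$ is a nonzero one-sided ideal inside $\ker\Phi$, one picks a nonzero $\eta=\sum a_g\delta_g\in I$ and, by suitably multiplying by $1_R\delta_1$ or by a $\delta_{g^{-1}}$-type element and using $\pi$, one extracts a nonzero element of a one-sided ideal of $R$ lying in $\ker f$, contradicting that $R$ is symmetric.

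The genuinely new ingredient, and the step I expect to be the main obstacle, is verifying the trace-like identity $\Phi(xy)=\Phi(yx)$ for all $x,y\in R*_{\alpha,w}G$. By $K$-bilinearity it suffices to check this on homogeneous elements $x=a_g\delta_g$ and $y=b_h\delta_h$. Using the multiplication rule
$$(a_g\delta_g)(b_h\delta_h)=\alpha_g(\alpha_g^{-1}(a_g)b_h)w_{g,h}\delta_{gh},$$
the $\delta_e$-component of $xy$ is nonzero only when $h=g^{-1}$, and symmetrically the $\delta_e$-component of $yx$ is nonzero only when $g=h^{-1}$; both force $h=g^{-1}$. So I must show that when $h=g^{-1}$ we have
$$f\bigl(\alpha_g(\alpha_g^{-1}(a_g)b_{g^{-1}})\,w_{g,g^{-1}}\bigr)=f\bigl(\alpha_{g^{-1}}(\alpha_{g^{-1}}^{-1}(b_{g^{-1}})a_g)\,w_{g^{-1},g}\bigr),$$
and that when $h\neq g^{-1}$ both sides are $0$ (which is immediate from the definition of $\pi$). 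The off-diagonal vanishing is trivial; the diagonal identity is where the work lies, and it will require the symmetry $f(uv)=f(vu)$ of the base form together with the cocycle relations (iii), (iv), (v) of Definition \ref{def1} governing the $w_{g,h}$ and the twisting in postulate (iii), $\alpha_g\circ\alpha_h(a)=w_{g,h}\alpha_{gh}(a)w_{g,h}^{-1}$.

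The strategy for the diagonal identity is to expand both sides in terms of $\alpha_g$, $\alpha_{g^{-1}}$ and the units $w_{g,g^{-1}}$, $w_{g^{-1},g}$ within the ideal $D_e=R$, and then repeatedly apply $f(uv)=f(vu)$ to move factors past one another so that one side is transformed into the other. Concretely, I would write the left-hand argument as $a_g\,\alpha_g(b_{g^{-1}})\,w_{g,g^{-1}}$ after simplifying $\alpha_g(\alpha_g^{-1}(a_g)b_{g^{-1}})=a_g\,\alpha_g(b_{g^{-1}})$ (valid since $a_g\in D_g$ and $\alpha_g$ is a ring homomorphism on the relevant ideal), do the analogous simplification on the right, and then use the symmetry of $f$ to cyclically permute, invoking the compatibility of $w_{g,g^{-1}}$ with $\alpha_g$ supplied by postulate (v) to absorb the cocycle discrepancy between $w_{g,g^{-1}}$ and $w_{g^{-1},g}$. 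Once $\Phi(xy)=\Phi(yx)$ is established, the conclusion is immediate: $\Phi$ is a symmetric nonzero $K$-linear form whose kernel contains no nonzero one-sided ideal, so $R*_{\alpha,w}G$ is a symmetric algebra. I would keep the written proof short by stating that the nonzero-ness of $\Phi$ and the kernel property follow exactly as in Theorem \ref{frobenius}, and concentrate the details on the trace identity, which is the only feature distinguishing the symmetric case from the Frobenius case.
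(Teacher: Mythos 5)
Your construction is the same one the paper uses: the paper proves the symmetric case by declaring that it ``follows the same ideas'' as the Frobenius case (Theorem \ref{frobenius}), i.e.\ it takes $\Phi=f\circ\pi$ and runs the kernel argument, and you do likewise. You actually go further than the paper by explicitly isolating the one point where the symmetric case differs from the Frobenius case, namely the trace identity $\Phi(xy)=\Phi(yx)$, and you are right that this is where all the work lies.

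However, your plan for that step does not go through, and this is a genuine gap. The diagonal identity you reduce to, relating $f\bigl(a_g\,\alpha_g(b_{g^{-1}})\,w_{g,g^{-1}}\bigr)$ to $f\bigl(b_{g^{-1}}\,\alpha_{g^{-1}}(a_g)\,w_{g^{-1},g}\bigr)$, is not a consequence of the symmetry $f(uv)=f(vu)$ together with the cocycle axioms: cyclic permutation inside $f$ never lets you trade $\alpha_g(b_{g^{-1}})$ for $b_{g^{-1}}$ unless $f$ is invariant under the (partial) action, i.e.\ $f\circ\alpha_g=f$ on $D_{g^{-1}}$, which is not a hypothesis. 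A minimal counterexample to the intermediate claim: take $R=K\times K$ with $G=\mathbb{Z}/2\mathbb{Z}$ acting globally and untwisted by swapping the factors, and $f(x,y)=\lambda x+\mu y$ with $\lambda\neq\mu$ both nonzero; this is a perfectly good symmetrizing form on the commutative semisimple algebra $R$. With $u=(1,0)\delta_\sigma$ and $v=(0,1)\delta_\sigma$ one computes $uv=(1,0)\delta_e$ and $vu=(0,1)\delta_e$, so $\Phi(uv)=\lambda\neq\mu=\Phi(vu)$. Thus $f\circ\pi$ is simply not a symmetric form for this choice of $f$, even though $R*_{\alpha}G\cong M_2(K)$ is a symmetric algebra; the defect is in the method, not the statement. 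To repair the argument you must first replace $f$ by an $\alpha$-invariant symmetrizing form (for instance by averaging over $G$ when $|G|$ is invertible in $K$, and then re-proving that the averaged form still has no nonzero one-sided ideal in its kernel), or add such invariance as a hypothesis. Neither your proposal nor the paper's one-line proof addresses this point, so on this step you have not done worse than the paper --- but the proof as sketched is incomplete.
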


 \hspace{.6cm}

\end{document}